\newcolumntype{C}[1]{>{\centering\arraybackslash}p{#1}} % new column type, a centered column with fixed with 
\newtheorem{remark}[theorem]{Remark}
\newcommand{\cplxs}{\mbox{$\mathbb{C}$}}
\newcommand{\cn}{\mbox{$\cplxs^{n}$}}
\newcommand{\cnxn}{\mbox{$\cplxs^{n \times n}$}}
\newcommand{\absval}[1]{\mbox{$\mid\!#1\!\mid$}}
\newcommand{\norm}[1]{\mbox{$\parallel\!#1\!\parallel$}}
\newcommand{\spn}[1]{\mbox{{\rm span}$\left\{#1\right\}$}}
\newcommand{\fl}[1]{\mbox{{\rm fl}$(#1)$}}
\newcommand{\eee}{\mbox{$\mathcal{E}$}}
\newcommand{\kay}{\mbox{$\mathcal{K}$}}
\newcommand{\ehl}{\mbox{$\mathcal{L}$}}
\newcommand{\cue}{\mbox{$\mathcal{Q}$}}
\newcommand{\ess}{\mbox{$\mathcal{S}$}}
\newcommand{\you}{\mbox{$\mathcal{U}$}}
\newcommand{\zee}{\mbox{$\mathcal{Z}$}}
\title{On pole-swapping algorithms \\ for the eigenvalue problem\thanks{This research was partially supported by 
the Research Council KU Leuven, project 
C14/16/056 (Inverse-free Rational Krylov Methods: Theory and Applications).}}
\author{Daan Camps\footnotemark[2]\and 
  Thomas Mach\footnotemark[3]\and
  Raf Vandebril\footnotemark[4]\and
  David~S.~Watkins\footnotemark[5]}
\shorttitle{Pole-swapping algorithms} 
\begin{document}

\maketitle

\renewcommand{\thefootnote}{\fnsymbol{footnote}}

\footnotetext[2]{Computational Research Division, Lawrence Berkeley National Laboratory, California
  (\texttt{dcamps@lbl.gov}).}%
\footnotetext[3]{Department of Mathematical Sciences, Kent State University, Ohio
  (\texttt{tmach1@kent.edu}).}%
\footnotetext[4]{Department of Computer Science, KU Leuven,
  Belgium (\texttt{raf.vandebril@cs.kuleuven.be}).}
\footnotetext[5]{Department of Mathematics, Washington State University
   (\texttt{watkins@math.wsu.edu})}

\renewcommand{\thefootnote}{\arabic{footnote}}

\begin{abstract}
Pole-swapping algorithms, which are generalizations of the QZ algorithm
for the generalized eigenvalue problem, are studied.  A new modular 
(and therefore more flexible) convergence theory that applies to all pole-swapping
algorithms is developed.  A key component of all such algorithms is a procedure that swaps two
adjacent eigenvalues in a triangular pencil.  An improved swapping routine is developed,
and its superiority over existing methods is demonstrated by a backward error analysis and 
numerical tests.  The modularity of the new convergence theory and the generality of the
pole-swapping approach shed new light on bi-directional chasing algorithms, optimally
packed shifts, and bulge pencils,  and allow the design of novel algorithms.
\end{abstract}

\begin{keywords}
eigenvalue, QZ algorithm, pole swapping, convergence
\end{keywords}

\begin{AMS}
  65F15, % Eigenvalues, eigenvectors
  15A18 % Eigenvalues, singular values, and eigenvectors
\end{AMS}

\section{Introduction}

The standard algorithm for computing the eigenvalues of a small to medium-sized non-Hermitian matrix
$A\in\cnxn$ is still Francis's implicitly-shifted QR algorithm \cite{Fra61b,Wat11}.  In many applications, eigenvalue
problems arise naturally as generalized eigenvalue problems for a pencil $A - \lambda B$, and
for these problems the Moler-Stewart variant of Francis's algorithm \cite{MolSte73}, commonly called the QZ 
algorithm,  can be used.  
In this paper we may refer sometimes to a pencil $A - \lambda B$ and other times to a pair $(A,B)$.  
Either way, we are talking about the same object.  

A few years ago we published a generalization of the QZ algorithm \cite{VanWat12g}.  
More recently an even more general algorithm, the \emph{rational QZ (RQZ) algorithm}, was presented by 
Camps, Meerbergen, and Vandebril \cite{CaMeVa19a}.  This arose from the study of rational 
Arnoldi methods and is related to work of Berljafa and G\"uttel \cite{BerGut15}.  

In this paper we discuss the RQZ algorithm and introduce several
variants.  We develop a new modular (and therefore more flexible)
convergence theory that can be applied immediately to all variants.

We reinterpret the QZ algorithm and show that it can be viewed as a 
pole-swapping algorithm with poles at infinity. Moreover we will 
show that the algorithm \cite{KaKrLa14}
for optimally packed chains of bulges is a disguised implementation of pole swapping. 

A key component of the RQZ and related algorithms is a procedure that swaps two
adjacent eigenvalues in a triangular pencil.  We present an improved swapping routine and 
demonstrate its superiority by numerical experiments and a backward error analysis.

%The principal contributions of this paper are 1) the new convergence theory and 2) the new
%swapping procedure and accompanying backward error analysis.

Double-shift pole-swapping algorithms that can be applied to real matrix pencils 
exist \cite{Cam19,CaMeVa19b}.   All of what is 
discussed in this paper for single shifts can be extended to the double-shift case, but we have not worked out 
every detail. The one item that will require further thought is the extension of the improved swapping
routine of Section~\ref{sec:newswap} to blocks larger than $1\times 1$.
A significant advantage of sticking to the complex single-shift case, as we have done here, 
is simplicity and clarity of presentation.  

\section{Hessenberg pairs}\label{sec:hespairs}

A pencil $A - \lambda B$ is called a \emph{regular pencil} or \emph{regular pair} 
if there is at least one complex $\mu$ such that $A - \mu B$ is invertible.  
Throughout this paper we make the blanket assumption of regularity.  

A matrix $A\in\cnxn$ is in (upper) \emph{Hessenberg form} if every entry below the first subdiagonal is zero.  It is 
in \emph{proper} Hessenberg form if every subdiagonal entry is nonzero, i.e.\ $a_{j+1,j} \neq 0$ for $j=1$, \ldots, $n-1$.  
A preliminary step for the $QZ$ algorithm is to reduce the pair $(A,B)$ to Hessenberg-triangular form.  That is, 
$(A,B)$ is transformed by a unitary equivalence to a new pair $(\check{A},\check{B})$ for which $\check{A}$ is 
upper Hessenberg and $\check{B}$ is upper triangular.  Notice that if $\check{A}$ is not properly Hessenberg,
the eigenvalue problem can be split immediately into two or more independent subproblems.  Thus we can always
assume that we are dealing with a matrix in proper Hessenberg form.  

In the new theory we deal with a more general class of Hessenberg pencils.  The pair $(A,B)$ is called a 
\emph{Hessenberg pair} if both $A$ and $B$ are Hessenberg matrices.   
If $a_{j+1,j} = 0 = b_{j+1,j}$ for some $j$, we
can immediately split the eigenvalue problem into two smaller problems.  We therefore eliminate that case
from further consideration.  
%It is a \emph{proper} Hessenberg pair if 
%$\absval{a_{j+1,j}} + \absval{b_{j+1,j}} > 0$ for $j=1$, \ldots, $n-1$.  In other words, for each $j$, at least one
%of  $a_{j+1,j}$ and $b_{j+1,j}$ is nonzero.  If a Hessenberg pair is not proper, the eigenvalue problem can be
%split immediately into two or more independent subproblems, so we will assume throughout this work that we
%are always dealing with a proper Hessenberg pair.  
For reasons that will become apparent later, the ratios $a_{j+1,j}/b_{j+1,j}$, $j=1$, \ldots, $n-1$ 
are called the \emph{poles} of the  Hessenberg pair  $(A,B)$.   
In the case $b_{j+1,j} = 0$, we have an infinite pole.   The Hessenberg-triangular form 
is a special Hessenberg pair for which all of the poles are infinite.  

Closely related to $(A,B)$ is the \emph{pole pair} $(A_{\pi},B_{\pi})$  (or \emph{pole pencil}
$A_{\pi}-\lambda B_{\pi}$) obtained from $(A,B)$ by deleting the first
row and last column.  The pole pencil is upper triangular, and its eigenvalues are
obviously the poles of $(A,B)$.

\subsection*{Operations on Hessenberg pairs}

Introducing terminology that we have used in some of our recent work 
\cite{AuMaRoVaWa18,AuMaRoVaWa18g,AuMaRoVaWa19,AuMaVaWa15},  we define a \emph{core transformation}  
(or \emph{core} for short) to be a unitary matrix that acts only on two adjacent rows/columns, for example,
\begin{displaymath}
Q_{3} = \left[\begin{array}{ccccc}
1 & & & & \\ & 1 & & & \\ & & {*} & {*} & \\  & & {*} & {*} & \\  & & & & 1
\end{array}\right],
\end{displaymath}
where the four asterisks form a $2 \times 2$ unitary matrix.  Givens rotations are examples of core transformations.
Our core transformations always have subscripts that tell where the action is:  $Q_{j}$ acts on rows/columns
$j$ and $j+1$.

Following \cite{CaMeVa19a} we introduce two types of operations, or \emph{moves}, both of which manipulate the poles
in the pair.  Let $\sigma_{1} = a_{21}/b_{21}$, \ldots, $\sigma_{n-1} = a_{n,n-1}/b_{n,n-1}$ denote the 
poles of the  Hessenberg pair $(A,B)$.  

\subsection*{Changing a pole at the top or bottom.  (Type I move)}  
We can change the pole $\sigma_{1}$ to any value we want by applying a core transformation
$Q_{1}^{*}$ to the pencil on the left.   Suppose we want to change $\sigma_{1}$ to $\rho$, say.  Noting that only the 
first two entries of $(A - \rho B)e_{1}$ can be nonzero, we deduce that there is a $Q_{1}$ such that  the second
entry of $Q_{1}^{*}(A - \rho B)e_{1}$ is zero.  In other words, 
\begin{equation}\label{eq:qzstart}
Q_{1}^{*}(A - \rho B)e_{1} = \gamma e_{1} 
\end{equation}
for some 
%nonzero 
$\gamma$.   If we then define 
$\hat{A} = Q_{1}^{*}A$ and $\hat{B} = Q_{1}^{*}B$, then $(\hat{A} - \rho \hat{B})e_{1} = \gamma e_{1}$, which
implies that $\hat{a}_{21} - \rho \hat{b}_{21} = 0$.  This means that $\rho = \hat{a}_{21}/\hat{b}_{21}$ is the new
first pole of $(\hat{A},\hat{B})$.  The other poles remain fixed, as they are untouched by the transformation.  
%(The core $Q_{1}$ defined here is exactly the same as the transformation that is used to initiate an iteration of the
%standard QZ algorithm with shift $\rho$.)

This operation fails only if $\hat{a}_{21} = 0 = \hat{b}_{21}$, yielding $\rho = 0/0$.   This happens exactly when 
the first columns of $A$ and $B$ are proportional.   But this is not such a failure after all, as it exposes 
$\hat{a}_{11}/\hat{b}_{11}$  as an eigenvalue of the pencil and allows us to deflate to a smaller problem by deleting 
the first row and column.  

In summary, if we want to replace pole $\sigma_{1}$ by $\rho$, we will either succeed in doing so or get a deflation 
of an eigenvalue.

\begin{remark}
When we write something like $A - \rho B$ here and elsewhere, this should be viewed as shorthand for
 $\beta A - \alpha B$ where $\alpha$ and $\beta$ are any scalars for which $\rho = \alpha/\beta$.   As a practical
 matter this allows us to use modest sized $\alpha$ and $\beta$ even when $\rho$ is very 
 large, and in particular it allows us to implement the case $\rho = \infty$ by taking $\beta = 0$.
 \end{remark}
  
The pole $\sigma_{n-1}$ at the bottom can also be replaced by any other pole, say $\tau$, by a similar procedure.
We want to  transform the pencil $A - \lambda B$ to $\hat{A} - \lambda \hat{B} = (A - \lambda B)Z_{n-1}$ with 
$\hat{a}_{n,n-1}/\hat{b}_{n,n-1} = \tau$.   Noting that the row vector $e_{n}^{T}(A - \tau B)$ has nonzero entries only in its
last two positions, we see that there must be a core transformation $Z_{n-1}$ that maps it to a multiple of $e_{n}^{T}$, 
i.e.\ $e_{n}^{T}(A - \tau B)Z_{n-1} = \gamma e_{n}^{T}$ for some %(nonzero) 
$\gamma$.   This is the desired transformation, 
since it implies $e_{n}^{T}(\hat{A} - \tau \hat{B}) = \gamma e_{n}^{T}$, which is equivalent to 
$\hat{a}_{n,n-1}/\hat{b}_{n,n-1} = \tau$.

This fails only if $\hat{a}_{n,n-1} = 0 = \hat{b}_{n,n-1}$, yielding $\tau = 0/0$, which happens exactly when the 
$n$th rows of $A$ and $B$ are proportional.  But again this is not really 
a failure at all, since it allows $\hat{a}_{nn}/\hat{b}_{nn}$ to be extracted as an eigenvalue and the problem to be
deflated to a smaller one.  

This discussion helps motivate the following definition.  
A Hessenberg pair is called a \emph{proper Hessenberg pair} if
three conditions hold:  (i) $\absval{a_{j+1,j}} + \absval{b_{j+1,j}} > 0$ for $j=1$, \ldots, $n-1$,  (ii)  the first columns 
of $A$ and $B$ are not proportional, (iii) the last rows of $A$ and $B$ are not proportional.  The first condition
just says that for each $j$, at least one of $a_{j+1,j}$ and $b_{j+1,j}$ is nonzero.  If this condition is 
not satisfied, we can immediately reduce the pencil to two smaller pencils.   If either of conditions (ii) 
and (iii) is not satisfied, we can also reduce the problem, as we know from the discussion immediately above.
Therefore, we can always assume, without loss of generality, that we are working with a proper Hessenberg pair.  

\begin{proposition}\cite{CaMeVa19a}\quad\label{prop:firstcol}
 In a proper Hessenberg pair, the core transformation $Q_{1}$ that replaces pole $\sigma_{1}$ by $\rho$ satisfies
 \begin{displaymath}
 Q_{1}e_{1} =  \delta\,(A - \rho B)(A - \sigma_{1}B)^{-1}e_{1}
 \end{displaymath}
 for some nonzero $\delta$.   
 %In words, the first column of $Q_{1}$ is proportional to the first column of 
 %$(A - \rho B)(A - \sigma_{1}B)^{-1}$.  
 \end{proposition}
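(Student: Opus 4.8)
The plan is to unwind the construction of $Q_{1}$ described just above the proposition and to exploit the Hessenberg structure of the first columns of $A$ and $B$ together with the properness hypotheses. First I would examine $(A-\sigma_{1}B)e_{1}$: since $A$ and $B$ are both Hessenberg, this vector has possibly-nonzero entries only in positions $1$ and $2$, and its second entry is $a_{21}-\sigma_{1}b_{21}=0$ by the very definition $\sigma_{1}=a_{21}/b_{21}$ (when $b_{21}=0$ one reads $A-\sigma_{1}B$ as a scalar multiple of $B$, following the Remark, and reaches the same conclusion). Hence $(A-\sigma_{1}B)e_{1}=c_{1}e_{1}$ with $c_{1}:=a_{11}-\sigma_{1}b_{11}$, and the key point is that $c_{1}\neq 0$: otherwise, combined with $a_{21}-\sigma_{1}b_{21}=0$, the whole first column of $A-\sigma_{1}B$ would vanish, i.e.\ the first columns of $A$ and $B$ would be proportional, contradicting properness condition~(ii). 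Consequently $(A-\sigma_{1}B)^{-1}e_{1}=c_{1}^{-1}e_{1}$ (and even without assuming $A-\sigma_{1}B$ invertible, $c_{1}^{-1}e_{1}$ is a preimage of $e_{1}$, which is all the formula needs).

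Substituting this gives $(A-\rho B)(A-\sigma_{1}B)^{-1}e_{1}=c_{1}^{-1}(A-\rho B)e_{1}$, so the assertion reduces to showing that $Q_{1}e_{1}$ is a nonzero scalar multiple of $(A-\rho B)e_{1}$. But that is precisely the defining property of $Q_{1}$: from $Q_{1}^{*}(A-\rho B)e_{1}=\gamma e_{1}$ we obtain $(A-\rho B)e_{1}=\gamma\,Q_{1}e_{1}$. To invert this proportionality I must check that $\gamma\neq 0$; if $\gamma=0$ then $(A-\rho B)e_{1}=0$, which again (reading $A-\rho B$ homogeneously as in the Remark when $\rho=\infty$) forces the first columns of $A$ and $B$ to be proportional, contradicting properness. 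Combining the two relations yields $Q_{1}e_{1}=(c_{1}/\gamma)\,(A-\rho B)(A-\sigma_{1}B)^{-1}e_{1}$, so $\delta=c_{1}/\gamma$ works.

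The computation is very short; the only points that genuinely require care are the two nonvanishing statements $c_{1}\neq 0$ and $\gamma\neq 0$ --- this is exactly where properness of the pair is used --- and the bookkeeping for the infinite-pole conventions ($\sigma_{1}=\infty$ and/or $\rho=\infty$), where $A-\lambda B$ must be interpreted in the homogeneous $\beta A-\alpha B$ form of the Remark. I expect the only real obstacle, insofar as there is one, to be staying scrupulous about those conventions and about the meaning of $(A-\sigma_{1}B)^{-1}e_{1}$ in the event that $A-\sigma_{1}B$ happens to be singular as a full matrix.
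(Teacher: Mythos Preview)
Your argument is correct and follows essentially the same route as the paper's: use the defining relation $Q_{1}^{*}(A-\rho B)e_{1}=\gamma e_{1}$, note that $(A-\sigma_{1}B)e_{1}=\check{\gamma}e_{1}$ because $\sigma_{1}$ is the first pole, and invoke properness to ensure both scalars are nonzero. Your write-up is in fact a bit more careful than the paper's, spelling out explicitly why properness forces $c_{1}\neq 0$ and $\gamma\neq 0$ and handling the homogeneous/infinite-pole conventions.
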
    
 
 \begin{proof}
 From our construction we have $Q_{1}e_{1} = \gamma^{-1}(A - \rho B)e_{1}$.  
  Since $\sigma_{1}$ is the first 
 pole of the pair $(A,B)$, we have $(A - \sigma_{1}B)e_{1} = \check{\gamma}e_{1}$ for some $\check{\gamma}$.
 The properness assumption guarantees that both $\gamma$ and $\check{\gamma}$ are nonzero.  
 Therefore $Q_{1}e_{1} = \delta (A - \rho B)(A - \sigma_{1}B)^{-1}e_{1}$, where $\delta = (\gamma\check{\gamma})^{-1}$.   
 \hfill\end{proof}
 
 \begin{remark}
 The insertion of the extra factor $(A - \sigma_{1}B)^{-1}$ may seem mysterious.  
 As we shall see later, this is just what is needed
 for a consistent convergence theory.   In the product $(A - \rho B)  (A - \sigma_{1}B)^{-1}$, the factor $A - \rho B$ signals
 that the pole $\rho$ is entering the pencil, while the factor $(A - \sigma_{1}B)^{-1}$ signals that the pole $\sigma_{1}$ is 
 leaving.  
 \end{remark}
 
 \begin{proposition}\cite{CaMeVa19a}\quad\label{prop:lastrow}
 In a proper Hessenberg pair, the core transformation $Z_{n-1}$ that replaces pole $\sigma_{n-1}$ by $\tau$ satisfies
 \begin{displaymath}
 e_{n}^{T}Z_{n-1}^{*} =  \delta\, e_{n}^{T}(A - \sigma_{n-1}B)^{-1}(A - \tau B)
 \end{displaymath}
 for some nonzero $\delta$.   
 %In words, the last row of $Z_{n-1}^{*}$ is proportional to the last row of 
 %$(A - \sigma_{n-1}B)^{-1}(A - \tau B)$.  
 \end{proposition}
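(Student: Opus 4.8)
The plan is to mirror the proof of Proposition~\ref{prop:firstcol} under the ``transpose'' that interchanges the first column and top pole with the last row and bottom pole, and interchanges a left-multiplying core on columns $1,2$ with a right-multiplying core on columns $n-1,n$; every step of the earlier argument should carry over verbatim.

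First I would read off from the construction of $Z_{n-1}$ the identity $e_{n}^{T}(A - \tau B)Z_{n-1} = \gamma e_{n}^{T}$ and multiply on the right by the unitary $Z_{n-1}^{*}$ to get $e_{n}^{T}Z_{n-1}^{*} = \gamma^{-1}e_{n}^{T}(A - \tau B)$. Next I would use that $\sigma_{n-1}$ is the last pole of $(A,B)$: because $A$ and $B$ are Hessenberg, the row $e_{n}^{T}(A - \sigma_{n-1}B)$ can be nonzero only in positions $n-1$ and $n$, and its $(n-1)$st entry is $a_{n,n-1} - \sigma_{n-1}b_{n,n-1} = 0$, so $e_{n}^{T}(A - \sigma_{n-1}B) = \check{\gamma}\,e_{n}^{T}$ for some $\check{\gamma}$, equivalently $e_{n}^{T}(A - \sigma_{n-1}B)^{-1} = \check{\gamma}^{-1}e_{n}^{T}$. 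Substituting this in yields
\begin{displaymath}
e_{n}^{T}Z_{n-1}^{*} = \gamma^{-1}e_{n}^{T}(A - \tau B) = (\check{\gamma}/\gamma)\, e_{n}^{T}(A - \sigma_{n-1}B)^{-1}(A - \tau B),
\end{displaymath}
which is the asserted identity with $\delta = \check{\gamma}/\gamma$. This has the same shape as the product in the remark following Proposition~\ref{prop:firstcol}: the factor $A - \tau B$ for the entering pole sits outermost, and the factor $(A - \sigma_{n-1}B)^{-1}$ for the leaving pole sits next to the unit vector.

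It then only remains to check $\delta \neq 0$, i.e.\ $\gamma \neq 0$ and $\check{\gamma} \neq 0$, and this is the one place where properness enters; it is also the only point that needs any care. If $\check{\gamma} = 0$, then $e_{n}^{T}(A - \sigma_{n-1}B) = 0$, i.e.\ $e_{n}^{T}A = \sigma_{n-1}e_{n}^{T}B$, so the last rows of $A$ and $B$ are proportional, contradicting condition~(iii) of properness; similarly $\gamma = 0$ would force $e_{n}^{T}(A - \tau B) = 0$, again giving proportional last rows, which is exactly the failure mode flagged in the discussion preceding the definition of a proper Hessenberg pair. I do not expect any genuine obstacle: the argument is the exact transpose of the one for Proposition~\ref{prop:firstcol}, and the only thing to watch is that here it is condition~(iii), not condition~(ii), that does the work.
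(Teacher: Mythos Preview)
Your proof is correct and follows essentially the same route as the paper's: read off $e_{n}^{T}Z_{n-1}^{*} = \gamma^{-1}e_{n}^{T}(A-\tau B)$ from the construction, use the pole identity $e_{n}^{T}(A-\sigma_{n-1}B)=\check{\gamma}e_{n}^{T}$ to insert the factor $(A-\sigma_{n-1}B)^{-1}$, and invoke properness (condition~(iii)) for nonvanishing. Your value $\delta=\check{\gamma}/\gamma$ is in fact the correct one; the paper records $\delta=(\gamma\check{\gamma})^{-1}$, which appears to be a slip, but this is immaterial since only $\delta\neq 0$ is claimed.
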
    
 
 \begin{proof}
 From our construction we have $e_{n}^{T}Z_{n-1}^{*} = \gamma^{-1}e_{n}^{T}(A - \tau B)$.  Since $\sigma_{n-1}$ is the 
 last  pole of the pair $(A,B)$, we have $e_{n}^{T}(A - \sigma_{n-1}B) = \check{\gamma}e_{n}^{T}$ 
 for some nonzero $\check{\gamma}$.   Therefore $e_{n}^{T}Z_{n-1}^{*} = 
 \delta e_{n}^{T}(A - \sigma_{n-1}B)^{-1}(A - \tau B)$, where $\delta = (\gamma\check{\gamma})^{-1}$.   
 \hfill\end{proof}
 
 The arithmetic cost of a move of type I is just the cost of multiplying $A$ and $B$ by a single core transformation,
 $Q_{1}^{*}$ or $Z_{n-1}$.  If the cores are Givens rotations applied in the conventional way, the cost is about 
 $8n$ multiplications
 and $4n$ additions, or $12n$ (complex) flops.  Different implementations could yield slightly different flop counts, 
 but regardless of the details the cost will be $O(n)$.
 
 Standard backward error analysis \cite{Wil65} shows that moves of type I are backward stable.
 
 \subsection*{Interchanging two poles.  (Type II move)} 

The second of the two allowed operations is to interchange two adjacent poles by a unitary equivalence 
$\hat{A} - \lambda \hat{B}  = Q_{j}^{*}(A - \lambda B)Z_{j-1}$.  To understand this, consider the pole pencil
$A_{\pi} - \lambda B_{\pi}$ obtained by discarding the first row and last column from $A - \lambda B$.  This 
pencil is upper triangular and has $\sigma_{1}$, \ldots, $\sigma_{n-1}$ as its eigenvalues.   
There are standard techniques \cite{BaiDem93,KagPor96a,Kagpor96b,VanD81}, \cite[\S\S~4.8,\,6.6]{Wat07} for interchanging 
any two adjacent eigenvalues $\sigma_{j-1}$ and $\sigma_{j}$. We will describe an improved method  
in Section~\ref{sec:newswap}.  Each of these requires only an equivalence transformation 
$\tilde{Q}_{j-1}^{*}(A_{\pi} - \lambda B_{\pi})\tilde{Z}_{j-1}$ by two core transformations $\tilde{Q}_{j-1}$ and
$\tilde{Z}_{j-1}$ of dimension $n-1$.  We then enlarge these matrices by adjoining a row and column
to the top of $\tilde{Q}_{j-1}$ and the bottom of $\tilde{Z}_{j-1}$:
\begin{displaymath}
Q_{j} = \left[\begin{array}{cc} 1 & 0 \\ 0 & \tilde{Q}_{j-1}\end{array}\right] \qquad
Z_{j-1} = \left[\begin{array}{cc}  \tilde{Z}_{j-1} & 0  \\ 0 & 1 \end{array}\right].
\end{displaymath}
Then $\hat{A} - \lambda \hat{B}  = Q_{j}^{*}(A - \lambda B)Z_{j-1}$ is the desired transformation.  

If the swap is done as described by Van Dooren \cite{VanD81}, the procedure always succeeds and is 
backward stable in a sense.  Our new swapping procedure will be shown to have improved stability.
In order not to interrupt the flow of the paper, we defer the description of the new procedure, as well as a  
discussion of backward errors, to Section~\ref{sec:newswap}.   

The flop count for a move of type II is about the same as for a move of type I, namely $12n$ if the core transformations
are implemented as Givens rotations.  In any event, the flop counts for moves of type I and II are about the same, 
and each move costs $O(n)$ 
flops.\footnote{This is the correct count for the case when only eigenvalues are being computed.  If eigenvectors
or some deflating subspaces are wanted as well, the transforming matrices $Q$ and $Z$ also need to be updated
on each move.  This adds about $6n$ (complex) flops for a type I move and $12n$ flops for a type II, but the total is 
still $O(n)$.}

\begin{remark}
We have one type of move that is able to change a pole at one end or the other 
and another type that swaps poles in the middle.  
It is natural to ask whether we can devise a move that changes a single pole in the middle.  The answer is  \emph{no}.
Consider a transformation 
\begin{equation}\label{eq:polestay}
\hat{A} - \lambda \hat{B} = Q^{*}(A - \lambda B)Z,
\end{equation}  
where $Q$ does not touch the first row and $Z$ does not touch the last column.  That is,
\begin{displaymath}
Q = \left[\begin{array}{cc} 1 & \\ & \tilde{Q} \end{array}\right] \quad\mbox{and}\quad
Z  = \left[\begin{array}{cc} \tilde{Z} & \\ & 1\end{array}\right].
\end{displaymath}
Under any such transformation the poles must remain invariant.  This is so because the transformation 
(\ref{eq:polestay}) is equivalent to a transformation $\tilde{Q}^{*}(A_{\pi} - \lambda B_{\pi})\tilde{Z}$
 on the pole pencil.  Since the poles of $A - \lambda B$ are the eigenvalues of the pole pencil, they must 
 remain fixed.
 
 Thus any transformation meant to change a pole must touch either the first row or the last column.
 That's what the moves of type I do.  
\end{remark}

\section{Building an algorithm from the pieces}
\label{sec:build:pieces}

Suppose we want to find the eigenvalues of some regular pair $(A,B)$.
As usual, there are two steps to the process.  The first is a direct method that transforms $(A,B)$ 
to a condensed form, in our case a Hessenberg pencil.  The second step is an iterative process that 
uncovers the eigenvalues of the condensed form.  

In some contexts the reduction phase can be skipped.   As a notable example, the rational Arnoldi process
\cite{BerGut15} applied to a large matrix naturally generates, after $k$ steps, a $k \times k$ Hessenberg pencil.  
The $i$th pole of the pencil is equal to the shift that was used on the $i$th step of the process.   We can obtain
estimates of the eigenvalues of the large matrix by computing the eigenvalues of the pencil.  This requires no
reduction;  we can go directly to the iterative phase.  

\subsection*{Reduction to a Hessenberg pencil}

Moler and Stewart \cite{MolSte73} showed how to reduce $(A,B)$ to Hessenberg-triangular form by a
direct method in $O(n^{3})$ flops.  
The reduction is also described in \cite{GolVan13,Wat07,Wat10} and elsewhere.  
If the resulting pair is not proper, we can split it 
into smaller proper pairs, so let us assume it is proper.   This is a Hessenberg pencil with all poles equal
to $\infty$.  If the user is happy to start from this configuration, s/he can move directly to the iterative phase.

If the user wants to set certain prescribed poles $\sigma_{1}$, \ldots, $\sigma_{n-1}$  
before beginning the iterations, that is also possible.  One obvious procedure is to begin by introducing $\sigma_{n-1}$
at the top of the pencil by a move of type I.  Then $\sigma_{n-1}$ can be swapped with each of the remaining infinite
poles by moves of type II until it arrives at its desired position at the bottom.  The total number of moves is $n-1$.  
Then $\sigma_{n-2}$ can be introduced at the top by a move of type I.  It can then be swapped with each of the remaining
infinite poles until it arrives at its desired position just above $\sigma_{n-1}$.  The total number of moves for this step
is $n-2$.   Then $\sigma_{n-3}$ can be introduced, and so on.  Eventually we get each of $\sigma_{1}$, \ldots,
$\sigma_{n-1}$ into its desired position.  The total number of moves for this phase is about $n^{2}/2$, and the total
flop count is $O(n^{3})$.

One can equally well introduce the poles at the bottom and swap them upward, starting with $\sigma_{1}$, then
$\sigma_{2}$, and so on.  The amount of work is exactly the same, about $n^{2}/2$ moves.  
Better yet, one can take $k \approx (n-1)/2$ and introduce $\sigma_{1}$, \ldots, $\sigma_{k}$ (in reverse order)
at the top and $\sigma_{k+1}$, \ldots, $\sigma_{n-1}$ at the bottom.  This cuts the number of moves in half.  
However one does it, the cost is $O(n^{3})$.

Camps, Meerbergen, and Vandebril \cite{CaMeVa19a} describe a procedure that introduces the poles during
the reduction to Hessenberg form.   They also present an example where a good choice of poles induces a 
deflation in the middle of the pencil.  

\subsection*{The iterative phase (basic algorithm)}

During the discussion of moves of type I in Section~\ref{sec:hespairs} we defined \emph{proper} Hessenberg 
pairs and noted that if a Hessenberg pair is not proper, it can be reduced to smaller pairs that are.
We therefore assume, without loss of generality, that we have a proper Hessenberg pair $(A,B)$ with poles $\sigma_{1}$, \ldots, $\sigma_{n-1}$.  
We now describe an iteration of the RQZ algorithm proposed in \cite{CaMeVa19a}.  
We will call this \emph{the basic algorithm}.

First a shift $\rho$ is chosen.  Any of the usual shifting strategies can be employed here.  The simplest is the
Rayleigh-quotient shift $\rho = a_{nn}/b_{nn}$.  Then $\rho$ is introduced as a pole at the top of the pencil, 
replacing $\sigma_{1}$, by a move of type I.  Next $\rho$ is swapped with $\sigma_{2}$ by a move of type II.  
Then another move of type II is used to swap $\rho$ with $\sigma_{3}$, and so on.  After $n-2$ moves of type II,
$\rho$ arrives at the bottom of the pencil.   The poles are now $\sigma_{2}$, \ldots, $\sigma_{n-1}$, and $\rho$.
Finally a move of type I is used to remove the pole $\rho$ from the bottom, replacing it by a new pole $\sigma_{n}$.
This completes the iteration.  
The user has complete flexibility in the choice of $\sigma_{n}$.  One possibility is $\sigma_{n} =\infty$.  Another,
which might be called a \emph{Rayleigh-quotient pole}, is $\sigma_{n} = a_{11}/b_{11}$.     

The cost of one iteration of the basic algorithm is $n$ moves or $O(n^{2})$ flops.
With any of the standard shifting strategies, e.g.\ Rayleigh-quotient shift, repeated iterations will normally cause
rapid convergence of an eigenvalue at the bottom of the pencil.  Typically 
$a_{n,n-1} \to 0$ and  $b_{n,n-1} \to 0$ quadratically, leaving $a_{nn}/b_{nn}$ as an eigenvalue and allowing deflation
of the problem.  After $n-1$ deflations, all of the eigenvalues will have been found.  

There are numerous variations on the basic algorithm.  For example, it can be turned
upside down.  We can pick a shift, say $\rho = a_{11}/b_{11}$, insert it at the bottom of the pencil, 
and chase it to the top.  
Since we can do this, then why not chase shifts in both directions at once?   Some possibilities along these lines will
be discussed in Section~\ref{sec:variations}.

\subsection*{Relationship to the QZ algorithm}

We now show that when the basic algorithm is applied to a pair that has all poles infinity, it reduces to the 
single-shift version of the Moler/Stewart QZ algorithm.  Consider a Hessenberg-triangular pair
\begin{displaymath}
\parbox{2.cm}{
\begin{tikzpicture}[scale=1.66,y=-1cm]
\draw (-.15,-.1) -- (-.2,-.1) -- (-.2,0.7) -- (-.15,0.7);
\draw (.75,-.1) -- (.8,-.1) -- (.8,0.7) -- (.75,0.7);
\foreach \j in {0,...,3}{
   \foreach \i in {\j,...,3}{\node at (\i/5,\j/5)
     [align=center,scale=1.0]{$\times$};}}
\foreach \j in {0,...,2}{\node at (\j/5,\j/5+.2)
     [align=center,scale=1.0]{$\times$};}
\phantom{\color{red}\node at (0,.4){$+$};\color{black}}
\phantom{\color{red}\node at (.2,.6){$+$};\color{black}}
\end{tikzpicture} 
} \qquad \qquad
\parbox{2.cm}{
\begin{tikzpicture}[scale=1.66,y=-1cm]
\draw (-.15,-.1) -- (-.2,-.1) -- (-.2,0.7) -- (-.15,0.7);
\draw (.75,-.1) -- (.8,-.1) -- (.8,0.7) -- (.75,0.7);
\foreach \j in {0,...,3}{
   \foreach \i in {\j,...,3}{\node at (\i/5,\j/5)
     [align=center,scale=1.0]{$\times$};}}
\phantom{\color{red}\node at (0,.4){$+$};\color{black}}
\phantom{\color{red}\node at (.2,.6){$+$};\color{black}}
\end{tikzpicture} 
} 
\end{displaymath} 
which has poles $\infty$, $\infty$, and $\infty$.  An iteration of the basic algorithm 
begins by choosing a shift $\rho$ 
and inserting it into the pair at the top by a move of type I.   The transformation is $A \to Q_{1}^{*}A$, 
$B \to Q_{1}^{*}B$, where $Q_{1}$ satisfies (\ref{eq:qzstart}).  This is exactly the same as the transformation
that starts single-shift QZ  \cite[p.~537]{Wat10}.  It alters the first two rows of the matrices, 
so the transformed matrices have the form
\begin{equation}\label{eq:b21bulge}
\parbox{2.cm}{
\begin{tikzpicture}[scale=1.66,y=-1cm]
\draw (-.15,-.1) -- (-.2,-.1) -- (-.2,0.7) -- (-.15,0.7);
\draw (.75,-.1) -- (.8,-.1) -- (.8,0.7) -- (.75,0.7);
\foreach \j in {0,...,3}{
   \foreach \i in {\j,...,3}{\node at (\i/5,\j/5)
     [align=center,scale=1.0]{$\times$};}}
\foreach \j in {0,...,2}{\node at (\j/5,\j/5+.2)
     [align=center,scale=1.0]{$\times$};}
\phantom{\color{red}\node at (0,.4){$+$};\color{black}}
\phantom{\color{red}\node at (.2,.6){$+$};\color{black}}
\end{tikzpicture} 
} \qquad \qquad
\parbox{2.cm}{
\begin{tikzpicture}[scale=1.66,y=-1cm]
\draw (-.15,-.1) -- (-.2,-.1) -- (-.2,0.7) -- (-.15,0.7);
\draw (.75,-.1) -- (.8,-.1) -- (.8,0.7) -- (.75,0.7);
\foreach \j in {0,...,3}{
   \foreach \i in {\j,...,3}{\node at (\i/5,\j/5)
     [align=center,scale=1.0]{$\times$};}}
\color{red}\node at (0,.2){$+$};\color{black}
\phantom{\color{red}\node at (.2,.6){$+$};\color{black}}
\end{tikzpicture} 
}. 
\end{equation}  
The triangular form of $B$ has been disturbed, but this is still a Hessenberg pair.  Its poles are 
$\rho$, $\infty$, $\infty$.   (We will continue to refer to the matrices as ``$A$" and ``$B$'', 
even though they change in the course of the iteration.)
The next step of the basic algorithm is a move of type II that interchanges the 
pole $\rho$ with the adjacent pole $\infty$, resulting in 
\begin{equation}\label{eq:b32bulge}
\parbox{2.cm}{
\begin{tikzpicture}[scale=1.66,y=-1cm]
\draw (-.15,-.1) -- (-.2,-.1) -- (-.2,0.7) -- (-.15,0.7);
\draw (.75,-.1) -- (.8,-.1) -- (.8,0.7) -- (.75,0.7);
\foreach \j in {0,...,3}{
   \foreach \i in {\j,...,3}{\node at (\i/5,\j/5)
     [align=center,scale=1.0]{$\times$};}}
\foreach \j in {0,...,2}{\node at (\j/5,\j/5+.2)
     [align=center,scale=1.0]{$\times$};}
\phantom{\color{red}\node at (0,.4){$+$};\color{black}}
\phantom{\color{red}\node at (.2,.6){$+$};\color{black}}
\end{tikzpicture} 
} \qquad\qquad 
\parbox{2.cm}{
\begin{tikzpicture}[scale=1.66,y=-1cm]
\draw (-.15,-.1) -- (-.2,-.1) -- (-.2,0.7) -- (-.15,0.7);
\draw (.75,-.1) -- (.8,-.1) -- (.8,0.7) -- (.75,0.7);
\foreach \j in {0,...,3}{
   \foreach \i in {\j,...,3}{\node at (\i/5,\j/5)
     [align=center,scale=1.0]{$\times$};}}
\phantom{\color{red}\node at (0,.2){$+$};\color{black}}
\color{red}\node at (.2,.4){$+$};\color{black}
\end{tikzpicture} 
}, 
\end{equation}  
a Hessenberg pair with poles $\infty$, $\rho$, $\infty$.   The transformation has the form 
$A \to Q_{2}^{*}AZ_{1}$, $B \to Q_{2}^{*}BZ_{1}$, with appropriately 
chosen core transformations $Z_{1}$ and $Q_{2}$.  Let us consider now how things look if we apply the cores
one at a time.   Starting from the configuration shown in (\ref{eq:b21bulge}), first apply $Z_{1}$
on the right.   This acts on columns one and two of each matrix and produces
\begin{displaymath}
\parbox{2.cm}{
\begin{tikzpicture}[scale=1.66,y=-1cm]
\draw (-.15,-.1) -- (-.2,-.1) -- (-.2,0.7) -- (-.15,0.7);
\draw (.75,-.1) -- (.8,-.1) -- (.8,0.7) -- (.75,0.7);
\foreach \j in {0,...,3}{
   \foreach \i in {\j,...,3}{\node at (\i/5,\j/5)
     [align=center,scale=1.0]{$\times$};}}
\foreach \j in {0,...,2}{\node at (\j/5,\j/5+.2)
     [align=center,scale=1.0]{$\times$};}
\color{red}\node at (0,.4){$+$};\color{black}
\phantom{\color{red}\node at (.2,.6){$+$};\color{black}}
\end{tikzpicture} 
} \qquad\qquad 
\parbox{2.cm}{
\begin{tikzpicture}[scale=1.66,y=-1cm]
\draw (-.15,-.1) -- (-.2,-.1) -- (-.2,0.7) -- (-.15,0.7);
\draw (.75,-.1) -- (.8,-.1) -- (.8,0.7) -- (.75,0.7);
\foreach \j in {0,...,3}{
   \foreach \i in {\j,...,3}{\node at (\i/5,\j/5)
     [align=center,scale=1.0]{$\times$};}}
\phantom{\color{red}\node at (0,.2){$+$};\color{black}}
\phantom{\color{red}\node at (.2,.6){$+$};\color{black}}
\end{tikzpicture} 
}. 
\end{displaymath}  
The entry $b_{21}$ must now be zero.  This is so because, as we know, after the application of $Q_{2}^{*}$ on 
the left, $b_{21}$ must be zero, as shown in (\ref{eq:b32bulge}).  The left multiplication by $Q_{2}^{*}$ cannot do this job, 
so it must have been done by $Z_{1}$.  At the same time, $Z_{1}$ must produce a bulge
at $a_{31}$.  This proves that $Z_{1}$ is exactly the same transformation as is used at this point
in the QZ bulge chase.  

Now, when we apply $Q_{2}^{*}$ on the left, it operates on rows two and three.  
It must set $a_{31}$ to zero and create a new bulge at $b_{32}$ to arrive at (\ref{eq:b32bulge}).
Thus $Q_{2}$ is exactly the same transformation as is used at this point in the QZ bulge chase.

The next step is a move of type II that transforms (\ref{eq:b32bulge}) to 
\begin{equation}\label{eq:b43bulge}
\parbox{2.cm}{
\begin{tikzpicture}[scale=1.66,y=-1cm]
\draw (-.15,-.1) -- (-.2,-.1) -- (-.2,0.7) -- (-.15,0.7);
\draw (.75,-.1) -- (.8,-.1) -- (.8,0.7) -- (.75,0.7);
\foreach \j in {0,...,3}{
   \foreach \i in {\j,...,3}{\node at (\i/5,\j/5)
     [align=center,scale=1.0]{$\times$};}}
\foreach \j in {0,...,2}{\node at (\j/5,\j/5+.2)
     [align=center,scale=1.0]{$\times$};}
\phantom{\color{red}\node at (0,.4){$+$};\color{black}}
\phantom{\color{red}\node at (.2,.6){$+$};\color{black}}
\end{tikzpicture} 
} \qquad\qquad 
\parbox{2.cm}{
\begin{tikzpicture}[scale=1.66,y=-1cm]
\draw (-.15,-.1) -- (-.2,-.1) -- (-.2,0.7) -- (-.15,0.7);
\draw (.75,-.1) -- (.8,-.1) -- (.8,0.7) -- (.75,0.7);
\foreach \j in {0,...,3}{
   \foreach \i in {\j,...,3}{\node at (\i/5,\j/5)
     [align=center,scale=1.0]{$\times$};}}
\phantom{\color{red}\node at (0,.2){$+$};\color{black}}
\phantom{\color{red}\node at (.2,.4){$+$};\color{black}}
\color{red}\node at (.4,.6){$+$};\color{black}
\end{tikzpicture} 
}, 
\end{equation}  
a Hessenberg pair with poles $\infty$, $\infty$, $\rho$.  The transformation has the form 
$A \to Q_{3}^{*}AZ_{2}$,  $B \to Q_{3}^{*}BZ_{2}$.  
Again we could look at what happens if we apply the cores one at a time, first $Z_{2}$, then $Q_{3}^{*}$, 
and we would find as before that these are exactly the same transformations as in a $QZ$ bulge chase.  

In our little example, we have now reached the bottom.  In a larger example, we would continue moves of type
II, pushing the pole $\rho$ downward, and at each step we would have the same situation.  The final step is 
a move of type I that removes $\rho$ from the bottom of the pencil, replacing it by a pole $\infty$.  This is exactly
the transform, acting on columns $n-1$ and $n$, that sets  $b_{n,n-1}$  (the entry $b_{43}$ entry in (\ref{eq:b43bulge}))
to zero.  Again this is exactly the same as the transformation that completes the QZ bulge chase.  The pair is now
in Hessenberg-triangular form.  

We have demonstrated that the basic algorithm reduces to the single-shift QZ algorithm in the case when 
all of the poles are infinite.  

\section{Convergence theory}

In the convergence theorems in this paper we make the blanket (and generically valid) assumption that none of
the poles or shifts that are mentioned are eigenvalues of the pencil.   We often find it convenient to 
assume that $B$ is nonsingular. 

The mechanism that drives all variants of Francis's algorithm is nested subspace 
iteration with changes of coordinate system 
\cite[p.~431]{Wat10}, \cite[p.~399]{Wat11},  \cite[Thm~2.2.3]{AuMaRoVaWa18}.  
As a specific example, let us consider a single step of the QZ algorithm
with shift $\rho$ applied to a Hessenberg-triangular pencil $A - \lambda B$, 
yielding a new pencil $\hat{A} - \lambda \hat{B}$ with 
\begin{equation}\label{eq:qzcoord}
\hat{A} - \lambda \hat{B} = Q^{*}(A - \lambda B)Z.
\end{equation}
First we define some nested sequences of subspaces.  
For $k=1$, \ldots, $n$, define 
\begin{displaymath}
\eee_{k} = \spn{e_{1}, \ldots, e_{k}},
\end{displaymath}
where $e_{1}$, \ldots, $e_{n}$ are the standard basis vectors.  Then define 
\begin{displaymath}
\cue_{k} = Q\eee_{k} \quad\mbox{and}\quad \zee_{k} = Z\eee_{k}.
\end{displaymath}
Thus $\cue_{k}$ (resp.\ $\zee_{k}$) is the space spanned by the first $k$ columns of $Q$ (resp.\ $Z$).

\begin{theorem}\label{thm:qzspace}
A single step of the $QZ$ algorithm with shift $\rho$ effects nested subspace iterations  
\begin{displaymath} 
\cue_{k} = (AB^{-1} - \rho I)\eee_{k}, \qquad \zee_{k} = (B^{-1}A - \rho I)\eee_{k}, \qquad k = 1,\ldots,n-1.
\end{displaymath}
The change of coordinate system (\ref{eq:qzcoord}) transforms both $\cue_{k}$ and $\zee_{k}$ back to $\eee_{k}$.
\end{theorem}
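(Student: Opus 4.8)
### Proof Proposal for Theorem~\ref{thm:qzspace}

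The plan is to reconstruct the single QZ step as a sequence of two sweeps (a right-to-left chase creating and then annihilating a bulge) and to track what each core transformation does to the nested spaces $\eee_{k}$, then to recognize the composite transformation as nested subspace iteration driven by the polynomial $A - \rho B$ acting suitably. First I would write the QZ step in the factored form $Q = Q_{1}Q_{2}\cdots Q_{n-1}$ and $Z = Z_{1}Z_{2}\cdots Z_{n-1}$, where $Q_{1}$ is the initiating core satisfying (\ref{eq:qzstart}), i.e.\ $Q_{1}^{*}(A - \rho B)e_{1} = \gamma e_{1}$, and the remaining cores perform the bulge chase. This is exactly the decomposition made explicit in the ``Relationship to the QZ algorithm'' section, where each pair $(Z_{j-1}, Q_{j})$ was identified with a type II swap moving the pole $\rho$ down one position.

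The key algebraic step is to establish the relation $(A - \rho B)Z_{k} = Q_{k}R_{k}$ for each $k$, where $R_{k}$ is a matrix whose leading $k \times k$ block is upper triangular (more precisely, $e_{j}^{T}(A - \rho B)Z e_{i} = 0$ for $i \le k < j$, once all cores through index $k$ have been applied and before those with larger index act). Since $Z e_{i} \in \eee_{k}$ for $i \le k$ while $Q$ and $Q_{k+1}\cdots Q_{n-1}$ leave $\eee_{k}$ invariant, one reads off that $(A-\rho B)\eee_{k} = (A - \rho B)Z\eee_{k} \subseteq Q\eee_{k} = \cue_{k}$, with equality by dimension count (using the blanket assumption that $\rho$ is not an eigenvalue, so $A - \rho B$ is invertible). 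Writing $A - \rho B = (AB^{-1} - \rho I)B = B(B^{-1}A - \rho I)$ and noting that $B$ is upper triangular so $B\eee_{k} = \eee_{k}$, this immediately upgrades to $\cue_{k} = (AB^{-1} - \rho I)\eee_{k}$. For the $\zee_{k}$ statement one argues on the other side: from $(A - \rho B) Z = Q R$ with $R$ upper triangular (as a whole, since the final configuration is Hessenberg-triangular and the shift has been chased out), one gets $Z = (A - \rho B)^{-1} Q R$; combined with $(A - \rho B)^{-1} = B^{-1}(AB^{-1} - \rho I)^{-1}$ this does not directly give the claim, so instead I would use $Q^{*}(A - \lambda B)Z = \hat A - \lambda\hat B$ evaluated structurally: $\hat A$ Hessenberg, $\hat B$ triangular, hence $Q^{*} B Z = \hat B$ gives $BZ\eee_{k} = Q\hat B\eee_{k} = Q\eee_{k} = \cue_{k}$, so $Z\eee_{k} = B^{-1}\cue_{k} = B^{-1}(AB^{-1}-\rho I)\eee_{k} = (B^{-1}A - \rho I)B^{-1}\eee_{k} = (B^{-1}A - \rho I)\eee_{k}$, again using $B^{-1}\eee_{k} = \eee_{k}$.

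The last sentence of the theorem is then immediate: $Q^{*}\cue_{k} = Q^{*}Q\eee_{k} = \eee_{k}$ and $Z^{*}\zee_{k} = \eee_{k}$, which is precisely what it means for the change of coordinates (\ref{eq:qzcoord}) to carry $\cue_{k}$ and $\zee_{k}$ back to $\eee_{k}$.

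The main obstacle, I expect, is the careful bookkeeping that establishes the triangularity pattern ``$e_{j}^{T}(A-\rho B)Z e_{i} = 0$ for $i \le k < j$'' at the right stage of the chase --- that is, proving that the product of the first $k$ transformations on each side genuinely accomplishes $k$ columns' worth of QR-type elimination of $A - \rho B$. This requires matching the bulge-creation/bulge-annihilation bookkeeping (the entries flagged with \textcolor{red}{$+$} in the diagrams) against the claim that $Z_{1}$ is the QR-updating core for column one of $A - \rho B$, then inducting. An alternative route that sidesteps some of this is to invoke the known equivalence of a Francis/QZ step with subspace iteration (cited in the paper as \cite[p.~431]{Wat10}, \cite[Thm~2.2.3]{AuMaRoVaWa18}) and simply verify that the polynomial being iterated is $A - \rho B$ (degree one, single shift) rather than reproving it from scratch; but since the paper is building a self-contained modular theory, I would favor the direct argument above, isolating the ``implicit-Q''-style uniqueness fact as a separate lemma if the chase bookkeeping proves lengthy.
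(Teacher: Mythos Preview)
Your derivation of $\zee_k$ from $\cue_k$ via the upper-triangularity of $\hat B = Q^*BZ$ is correct and tidy. The gap is in the argument for $\cue_k$. The claim ``$Ze_i \in \eee_k$ for $i\le k$'' is false: the product $Z = Z_1 Z_2\cdots Z_{n-1}$ of descending cores is upper \emph{Hessenberg}, not upper triangular, so $Ze_k$ generically has a nonzero $e_{k+1}$ component (indeed, if $Z\eee_k=\eee_k$ held for every $k$ then $Z$ would be diagonal and the step trivial). The intermediate triangularity pattern you posit also fails: after the $k$th move the pair has pole $\rho$ at position $k$ and $\infty$ elsewhere, so the running $\tilde A - \rho\tilde B$ is a full Hessenberg matrix with a \emph{single} subdiagonal zero at $(k{+}1,k)$, not a matrix whose leading $k\times k$ block is triangular. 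Consequently $(A-\rho B)Z\eee_k \subseteq Q\eee_{k+1}$ rather than $Q\eee_k$, and your chain $(A-\rho B)\eee_k = (A-\rho B)Z\eee_k \subseteq \cue_k$ breaks at both steps.

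What does hold --- and would salvage a direct proof --- is that $Q^*(A-\rho B)$ (with no $Z$) is upper triangular, i.e.\ $Q$ is the orthogonal QR factor of the Hessenberg matrix $A-\rho B$; from this $\cue_k=(A-\rho B)\eee_k=(AB^{-1}-\rho I)B\eee_k=(AB^{-1}-\rho I)\eee_k$ follows at once. But establishing that identity from the chase really does require the implicit-$Q$ argument you relegate to a fallback: both $Q$ and the QR factor share the first column proportional to $(A-\rho B)e_1$, and both reduce $AB^{-1}$ to Hessenberg form by similarity, hence coincide up to column phases. The paper, by contrast, does not argue directly at all: it proves the single-move Theorem~\ref{thm:movespace} via rational Krylov subspaces, composes moves with Lemma~\ref{lem:accumstep} to obtain Theorem~\ref{thm:basicalgspace}, and then recovers Theorem~\ref{thm:qzspace} as the special case where every pole is $\infty$, so that $(A-\rho B)(A-\sigma_k B)^{-1}$ collapses to $AB^{-1}-\rho I$.
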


We call this a \emph{convergence theorem} even though it makes no mention of convergence.   Theorems
like this can be used together with the convergence theory of subspace iteration to draw conclusions about
the convergence of the algorithm, as explained in \cite{Wat07,Wat10,Wat11} and elsewhere.  

Camps, Meerbergen, and Vandebril \cite[Thm.~6.1]{CaMeVa19a} proved a result like Theorem~\ref{thm:qzspace} 
for the basic algorithm.  
The scenario is similar.  The iteration begins with a 
proper Hessenberg pair $(A,B)$ with poles $\sigma_{1}$, \ldots, $\sigma_{n-1}$, employs a shift $\rho$, and ends
with a new proper Hessenberg pair $(\hat{A},\hat{B})$ with poles $\sigma_{2}$, \ldots, $\sigma_{n}$.  The old and new
pairs are related by a unitary equivalence transformation of the form (\ref{eq:qzcoord}).

\begin{theorem}\label{thm:basicalgspace}
A single step of the basic algorithm with shift $\rho$, starting with a proper Hessenberg pair $(A,B)$ with poles
$\sigma_{1}$, \ldots, $\sigma_{n-1}$ and ending with $(\hat{A},\hat{B})$ with poles $\sigma_{2}$, \ldots, $\sigma_{n}$ 
effects nested subspace iterations 
\begin{displaymath}
\cue_{k} = (A - \rho B)(A - \sigma_{k}B)^{-1}\eee_{k}, \quad \zee_{k} = (A - \sigma_{k+1}B)^{-1}(A - \rho B)\eee_{k}, 
\quad k = 1, \ldots, n-1.
\end{displaymath}
The change of coordinate system (\ref{eq:qzcoord}) transforms both $\cue_{k}$ and $\zee_{k}$ back to $\eee_{k}$.\end{theorem}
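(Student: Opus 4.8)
The plan is to form the composite unitary equivalence produced by one sweep of the basic algorithm and to read off, from the intermediate and final pencils, how it moves the standard flag $\eee_{1}\subset\eee_{2}\subset\cdots\subset\eee_{n-1}$. A sweep consists of $n$ moves: a type~I move at the top (left core $Q_{1}$, replacing $\sigma_{1}$ by $\rho$), then $n-2$ type~II moves (with left cores $Q_{2},\ldots,Q_{n-1}$ and right cores $Z_{1},\ldots,Z_{n-2}$) that drive $\rho$ down through pole positions $2,3,\ldots,n-1$, and finally a type~I move at the bottom (right core $Z_{n-1}$, replacing $\rho$ by $\sigma_{n}$). Hence $\hat A-\lambda\hat B=Q^{*}(A-\lambda B)Z$ with $Q=Q_{1}Q_{2}\cdots Q_{n-1}$ and $Z=Z_{1}Z_{2}\cdots Z_{n-1}$, so that $\cue_{k}=Q\eee_{k}$ and $\zee_{k}=Z\eee_{k}$. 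Since $Q_{i}$ (resp.\ $Z_{i}$) acts only on indices $i,i+1$, the cores $Q_{k+1},\ldots,Q_{n-1}$ fix $\eee_{k}$ pointwise and each of $Z_{1},\ldots,Z_{k-1}$ maps $\eee_{k}$ onto itself; writing $L_{k}=Q_{1}\cdots Q_{k}$ and $R_{k}=Z_{1}\cdots Z_{k-1}$ this gives $\cue_{k}=L_{k}\eee_{k}$ and $R_{k}\eee_{k}=\eee_{k}$.

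First I would establish the formula for $\cue_{k}$. After the $k$th move the running pencil is $L_{k}^{*}(A-\lambda B)R_{k}$, and keeping track of the pole positions through the moves shows that its $k$th pole is $\rho$; equivalently, the $(k+1,k)$ entry of the Hessenberg matrix $L_{k}^{*}(A-\rho B)R_{k}$ is zero, so this matrix maps $\eee_{k}$ into $\eee_{k}$. Translating the inclusion back through $L_{k}$ and $R_{k}$ and using $R_{k}\eee_{k}=\eee_{k}$, $L_{k}\eee_{k}=\cue_{k}$, we get $(A-\rho B)\eee_{k}\subseteq\cue_{k}$; since $\rho$ is not an eigenvalue, $A-\rho B$ is invertible, and comparing dimensions ($\dim\cue_{k}=k$ as $Q$ is unitary) forces $\cue_{k}=(A-\rho B)\eee_{k}$.

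Next I would do the same with the \emph{final} pencil to get $\zee_{k}$. Because $(\hat A,\hat B)$ has poles $\sigma_{2},\ldots,\sigma_{n}$, its $k$th pole is $\sigma_{k+1}$, so the $(k+1,k)$ entry of $\hat A-\sigma_{k+1}\hat B=Q^{*}(A-\sigma_{k+1}B)Z$ vanishes and this Hessenberg matrix maps $\eee_{k}$ into $\eee_{k}$. Pushing the inclusion back through $Q$ and $Z$ gives $(A-\sigma_{k+1}B)\zee_{k}\subseteq\cue_{k}$, and equality of dimensions (using that $\sigma_{k+1}$ is not an eigenvalue) yields $\zee_{k}=(A-\sigma_{k+1}B)^{-1}\cue_{k}=(A-\sigma_{k+1}B)^{-1}(A-\rho B)\eee_{k}$.

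It remains to recognize these as the stated expressions and to note the ``transforms back'' claim. Since $\sigma_{k}$ is the $k$th pole of the \emph{original} pair $(A,B)$, the $(k+1,k)$ entry of $A-\sigma_{k}B$ is zero, so $A-\sigma_{k}B$ maps $\eee_{k}$ into, and (being invertible) onto, $\eee_{k}$; hence $(A-\sigma_{k}B)^{-1}\eee_{k}=\eee_{k}$, and inserting this trivial factor turns $(A-\rho B)\eee_{k}$ into $(A-\rho B)(A-\sigma_{k}B)^{-1}\eee_{k}$, matching the theorem (and likewise the longer form of $\zee_{k}$). Finally $Q^{*}\cue_{k}=\eee_{k}$ and $Z^{*}\zee_{k}=\eee_{k}$ are immediate, which is the assertion that the coordinate change (\ref{eq:qzcoord}) carries both families back to $\eee_{k}$. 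The only real work is the bookkeeping: one must follow the location of $\rho$ (and of the $\sigma_{i}$) through all $n$ intermediate pencils, and keep the blanket genericity assumptions in force so that every move is well defined and leaves a proper Hessenberg pair — so that ``the $k$th pole is $\rho$'' (resp.\ ``$\sigma_{k+1}$'') even makes sense — and so that $A-\rho B$, $A-\sigma_{k}B$, $A-\sigma_{k+1}B$ are invertible, which is exactly what upgrades each inclusion above to an equality.
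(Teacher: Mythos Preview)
Your proof is correct, and it takes a genuinely different route from the paper's. The paper first proves Theorem~\ref{thm:movespace}, which identifies the rational function $s(z)=(z-\sigma_{j-1})/(z-\sigma_{j})$ driving a \emph{single} move, via the rational Krylov characterization $\eee_{j}=\kay_{j}(A,B,e_{1},[\sigma_{1},\ldots,\sigma_{j-1}])$ of Proposition~\ref{prop:ejkrylov}; it then composes moves using Lemma~\ref{lem:accumstep}, observing that only the $k$th move acts nontrivially on $\eee_{k}$, so the product collapses to $s(z)=(z-\rho)/(z-\sigma_{k})$. Your argument bypasses all of that machinery: you simply read off, from the Hessenberg form of the intermediate pencil $L_{k}^{*}(A-\lambda B)R_{k}$ (whose $k$th pole is $\rho$) and of the final pencil $\hat A-\lambda\hat B$ (whose $k$th pole is $\sigma_{k+1}$), the invariance statements $(A-\rho B)\eee_{k}=\cue_{k}$ and $(A-\sigma_{k+1}B)\zee_{k}=\cue_{k}$, and then note that the extra factor $(A-\sigma_{k}B)^{-1}$ in the theorem's statement is cosmetic because $(A-\sigma_{k}B)^{-1}\eee_{k}=\eee_{k}$. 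Your approach is more elementary and arguably cleaner for \emph{this} theorem; the paper's approach buys modularity --- Theorem~\ref{thm:movespace} and Lemma~\ref{lem:accumstep} together yield Theorem~\ref{thm:generalg}, which covers arbitrary sequences of type~I and type~II moves (bidirectional chasing, multiple shifts, etc.), something your direct argument would have to redo case by case.
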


This theorem was proved in \cite{CaMeVa19a}, but we will also provide a proof based on our new theory in
Section~\ref{sec:moveuse}.
Comparing this with Theorem~\ref{thm:qzspace}, we see that the inclusion of poles gives extra freedom that might
be used to improve convergence.   

Now consider Theorem~\ref{thm:basicalgspace}  in the case when all of the poles are infinite.  
When $\sigma_{k}=\infty$, 
the operator $(A - \rho B)(A - \sigma_{k}B)^{-1}$ becomes (when appropriately rescaled)  $(A - \rho B)B^{-1} = 
AB^{-1} - \rho I$.  Similarly $(A - \sigma_{k+1}B)^{-1}(A - \rho B)$ becomes $B^{-1}(A - \rho B) = B^{-1}A - \rho I$.  
These operators are exactly the ones that appear in Theorem~\ref{thm:qzspace},  just as we would expect.  

Although the QZ algorithm is a special case of the basic algorithm, 
there is an important difference in their implementation.  
The QZ algorithm acts on proper Hessenberg-triangular pencils.  It is a bulge-chasing algorithm.  The initial
equivalence transformation of each iteration creates a bulge in the Hessenberg-triangular form.  The rest of the 
iteration consists of equivalence transformations that chase the bulge back and forth between $A$ and $B$ until
it finally disappears off of the bottom of the pencil.  At that point the Hessenberg-triangular form has been restored
and the iteration is complete.   The QZ algorithm can also be implemented as a \emph{core-chasing} algorithm,
as is shown in \cite{AuMaRoVaWa18} and \cite{AuMaRoVaWa19}, but the situation is the same:  The 
Hessenberg-triangular form is disturbed at the beginning of the iteration and not restored until the very end.

Now let us contrast this with what happens in the basic algorithm (with infinite poles or otherwise).  
The basic algorithm 
operates on proper Hessenberg pairs, in which neither matrix is required to be triangular.  Each 
iteration starts with a move of type I, performs a sequence of moves of type II, and ends with a move of type I.  
These moves do not disturb the Hessenberg form; it is preserved throughout.  
%We do not have to wait until the end of an iteration to get the pencil back to proper Hessenberg form.  
This implies that we can think of each move as a ``mini iteration'' and ask whether we can obtain a result like
Theorem~\ref{thm:qzspace} or \ref{thm:basicalgspace} for each individual move of type I or II.   It turns out that 
we can.

Each move of either type is an equivalence transform of the form  
\begin{displaymath}
\hat{A} = Q_{j}^{*}A Z_{j-1} \quad \hat{B} = Q_{j}^{*}BZ_{j-1}.
\end{displaymath}
The case $j=1$ denotes a move of type I, and we have $Z_{0} = I$.  
The case $j=n$ also denotes a type I move, and in this case $Q_{n} = I$.
The cases $j=2$, \ldots, $n-1$ are of type II.   
Suppose $(A,B)$ has poles $\sigma_{1}$, \ldots, $\sigma_{n-1}$.  A
move of type II interchanges poles $\sigma_{j-1}$ and $\sigma_{j}$.  
For the moves of type I, in the case $j=1$, suppose the pole $\sigma_{1}$
is replaced by a new pole $\sigma_{0}$; in the case $j=n$, suppose $\sigma_{n-1}$
is replaced by a new pole $\sigma_{n}$.  With this notation we can cover both 
types of move by a single theorem. 

As above we define sequences of nested subspaces $(\cue_{k})$ and $(\zee_{k})$, where $\cue_{k}$  
(resp.\ $\zee_{k}$) is the space spanned by the first $k$ columns of $Q_{j}$ (resp.\ $Z_{j-1}$).  But note that,
because $Q_{j}$ and $Z_{j-1}$ are core transformations,  
these spaces are mostly trivial in this setting:  $\cue_{k} = \eee_{k}$ except when $k=j$, and $\zee_{k} = \eee_{k}$
except when $k=j-1$.

\begin{theorem}\label{thm:movespace}
Using notation and terminology established directly above, the move 
\begin{equation}\label{eq:movetrans}
\hat{A} - \lambda\hat{B} = Q_{j}^{*}(A - \lambda B) Z_{j-1} 
\end{equation}
effects nested subspace iterations that are, however, mostly trivial.  The nontrivial 
actions are 
\begin{displaymath}
\cue_{j} = (A - \sigma_{j-1} B)(A - \sigma_{j} B)^{-1}\eee_{j}
\end{displaymath}
and 
\begin{displaymath}
\zee_{j-1} = (A - \sigma_{j} B)^{-1}(A - \sigma_{j-1}B)\eee_{j-1}.
\end{displaymath}
The change of coordinate system (\ref{eq:movetrans}) transforms  $\cue_{j}$ back to $\eee_{j}$ and 
$\zee_{j-1}$ back to $\eee_{j-1}$.
\end{theorem}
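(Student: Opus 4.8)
The plan is to prove the two nontrivial subspace identities by treating the type~I and type~II cases in a unified way, using the two propositions already established. I would start from the observation that since $Q_j$ and $Z_{j-1}$ are core transformations acting on indices $\{j,j+1\}$ and $\{j-1,j\}$ respectively, the only nontrivial columns of $Q_j$ and $Z_{j-1}$ are the $j$th and the $(j-1)$th, and every other $\cue_k$ and $\zee_k$ coincides with $\eee_k$ trivially. So the content of the theorem is exactly the two stated formulas for $\cue_j$ and $\zee_{j-1}$, together with the (automatic) fact that multiplying by $Q_j^*$ sends $\cue_j = Q_j\eee_j$ back to $\eee_j$ and multiplying by $Z_{j-1}$ sends $\eee_{j-1}$ to $\zee_{j-1}$.

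First I would handle $\cue_j$. The claim is $\cue_j = \spn{e_1,\dots,e_{j-1}} + \spn{Q_j e_j}$, and since $\spn{e_1,\dots,e_{j-1}} = (A-\sigma_{j-1}B)(A-\sigma_j B)^{-1}\eee_{j-1}$ whenever the pole pencil's leading principal part is upper triangular (the operator $(A-\sigma_{j-1}B)(A-\sigma_j B)^{-1}$ fixes $\eee_{j-1}$ because both factors are, in the relevant coordinates, block upper triangular with the troublesome poles appearing only at position $j$), it suffices to show $Q_j e_j \in (A-\sigma_{j-1}B)(A-\sigma_j B)^{-1}\eee_j$ modulo $\eee_{j-1}$. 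For the type~I move at the top ($j=1$), this is precisely Proposition~\ref{prop:firstcol} with $\sigma_1$ playing the role of the outgoing pole $\sigma_j$ and $\sigma_0 = \sigma_{j-1}$ the incoming pole $\rho$: it gives $Q_1 e_1 = \delta (A-\sigma_0 B)(A-\sigma_1 B)^{-1}e_1$. For a type~II move ($2\le j\le n-1$) the core $Q_j$ comes from the swap applied to the pole pencil $A_\pi-\lambda B_\pi$; I would invoke the analogue of Proposition~\ref{prop:firstcol} for an interior swap — namely that the core $\tilde Q_{j-1}$ produced by the swapping routine satisfies $\tilde Q_{j-1}\tilde e_{j-1}$ proportional to $(A_\pi-\sigma_{j-1}B_\pi)(A_\pi-\sigma_j B_\pi)^{-1}\tilde e_{j-1}$ — and then lift this identity from the $(n-1)\times(n-1)$ pole pencil back to the full pencil by the index shift $\tilde e_{j-1}\leftrightarrow e_j$. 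The main obstacle is exactly this lifting step: one must check that the relation for the pole pencil, written in terms of $A_\pi$ and $B_\pi$, translates into the stated relation for $A$ and $B$ on $\eee_j$ modulo $\eee_{j-1}$, which amounts to verifying that the deleted first row and last column do not interfere — i.e., that $(A-\sigma B)(A-\sigma'B)^{-1}\eee_j$ depends only on the pole-pencil block. This is a bookkeeping argument about which entries of $(A-\sigma B)^{-1}$ are nonzero, using that $\sigma$ is a pole so that $(A-\sigma B)$ has the appropriate Hessenberg-with-one-zeroed-subdiagonal structure.

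Next I would treat $\zee_{j-1}$ by the transpose/left-right dual of the same argument: $\zee_{j-1} = \spn{e_1,\dots,e_{j-2}} + \spn{Z_{j-1}e_{j-1}}$, and I would show $e_{j-1}^T Z_{j-1}^* \in e_{j-1}^T(A-\sigma_j B)^{-1}(A-\sigma_{j-1}B)$ modulo the span of $e_1,\dots,e_{j-2}$. For the bottom type~I move ($j=n$) this is exactly Proposition~\ref{prop:lastrow}; for interior $j$ it is the dual of the interior-swap identity, again lifted from the pole pencil. Transposing $e_{j-1}^TZ_{j-1}^* = \delta e_{j-1}^T(A-\sigma_j B)^{-1}(A-\sigma_{j-1}B)$ gives $Z_{j-1}e_{j-1} = \bar\delta\,(A-\sigma_{j-1}B)^*(A-\sigma_j B)^{-*}e_{j-1}$, but I prefer to keep the row-vector form, observe that it pins down $\zee_{j-1}$ as a subspace, and then note that left-multiplication of $(A-\sigma_j B)^{-1}(A-\sigma_{j-1}B)\eee_{j-1}$ lands in $\eee_{j-1}$ plus a one-dimensional overflow into $e_j$, matching the Hessenberg structure. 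Finally, the coordinate-change claim is immediate: $Q_j^*\cue_j = Q_j^*Q_j\eee_j = \eee_j$ and $Z_{j-1}\eee_{j-1} = \zee_{j-1}$ by definition, so the transformation \eqref{eq:movetrans} does send $\cue_j\to\eee_j$ and $\zee_{j-1}\to\eee_{j-1}$ as asserted.

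I expect the only real work to be the interior-swap analogues of Propositions~\ref{prop:firstcol} and~\ref{prop:lastrow} together with the lifting lemma; everything else is a matter of unwinding definitions and checking sparsity patterns. In the write-up I would state the interior-swap identity as a short lemma (it follows from the defining property of whichever swap routine is used — e.g.\ Van Dooren's — that the relevant core maps the first column of $A_\pi-\sigma_{j-1}B_\pi$ restricted to the pertinent $2\times 2$ block in the prescribed way), prove the lifting by the sparsity bookkeeping above, and then assemble the theorem in three lines for $\cue_j$, three for $\zee_{j-1}$, and one for the coordinate change.
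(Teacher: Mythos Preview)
Your route is genuinely different from the paper's: the paper never touches the pole pencil or the internals of the swap routine for the interior case, but instead invokes the rational Krylov characterization $\eee_j = \kay_j(A,B,e_1,[\sigma_1,\ldots,\sigma_{j-1}])$ (Proposition~\ref{prop:ejkrylov}) for both $(A,B)$ and $(\hat A,\hat B)$, rewrites via \eqref{def:altratkry}, and cancels factors. Your plan to argue directly from structure is a reasonable alternative, but as written it has two concrete errors that would block the proof.

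First, the interior-swap identity you propose, ``$\tilde Q_{j-1}\tilde e_{j-1}$ proportional to $(A_\pi-\sigma_{j-1}B_\pi)(A_\pi-\sigma_j B_\pi)^{-1}\tilde e_{j-1}$'', is ill-posed: the poles $\sigma_{j-1},\sigma_j$ are by construction the \emph{eigenvalues} of the pole pencil, so $A_\pi - \sigma_j B_\pi$ is singular and the inverse does not exist. There is no well-defined identity on the pole pencil to lift. Second, your claim that $(A-\sigma_{j-1}B)(A-\sigma_j B)^{-1}$ fixes $\eee_{j-1}$ is false in general; a $3\times 3$ example with $j=2$ already produces a vector with nonzero $e_3$-component from $e_1$. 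What is true, and what rescues the direct approach, is that $(A-\sigma_j B)^{-1}\eee_j = \eee_j$ (the $(j{+}1,j)$ entry vanishes since $\sigma_j$ is the $j$th pole) and $(A-\sigma_{j-1}B)\eee_{j-1}=\eee_{j-1}$ (the $(j,j{-}1)$ entry vanishes since $\sigma_{j-1}$ is the $(j{-}1)$th pole), so the right-hand side equals $(A-\sigma_{j-1}B)\eee_j \supseteq \eee_{j-1}$. A clean way to finish without any lifting or swap-routine internals---and this is exactly what the paper does for the special case $j=2$ of the $\zee$ part---is to substitute $\lambda=\sigma_{j-1}$ in $Q_j(\hat A-\lambda\hat B)=(A-\lambda B)Z_{j-1}$ and apply to $\eee_j$: since $\sigma_{j-1}$ is the $j$th pole of $(\hat A,\hat B)$ one gets $(\hat A-\sigma_{j-1}\hat B)\eee_j=\eee_j$, and since $Z_{j-1}\eee_j=\eee_j$ one obtains $Q_j\eee_j=(A-\sigma_{j-1}B)\eee_j$ directly. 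The dual substitution $\lambda=\sigma_j$ applied to $\eee_{j-1}$ gives the $\zee_{j-1}$ identity.
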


The proof of Theorem~\ref{thm:movespace} makes use of rational Krylov subspaces.
Given $C\in\cnxn$ and $v\in \cn$, the standard \emph{Krylov subspaces} $\kay_{j}(C,v)$ are 
defined by 
\begin{displaymath}
\kay_{j}(C,v) = \spn{v,Cv,C^{2}v, \ldots, C^{j-1}v}, \qquad j = 1,\, 2,\,\ldots, \, n .
\end{displaymath}
Given an ordered set of poles $[\sigma_{1},\, \sigma_{2},\, \ldots,\, \sigma_{n-1}]$, none in the spectrum 
of $C$, the 
\emph{rational Krylov subspaces}  $\kay_{j}(C,v,[\sigma_{1}, \ldots, \sigma_{j-1}])$ are defined by 
\begin{eqnarray*}
\kay_{1}(C,v,[\,]) & = & \spn{v}, \\
\kay_{2}(C,v, [\sigma_{1}]) & = & \spn{v,(C - \sigma_{1}I)^{-1}v}, \\
\kay_{3}(C,v, [\sigma_{1}, \sigma_{2}]) & = & \spn{v,(C - \sigma_{1}I)^{-1}v,(C-\sigma_{2}I)^{-1}(C - \sigma_{1}I)^{-1}v},
\end{eqnarray*}
and in general 
\begin{displaymath}
\kay_{j}(C,v, [\sigma_{1}, \ldots, \sigma_{j-1}]) = 
\spn{v,(C - \sigma_{1}I)^{-1}v, \ldots, \left(\prod_{i=1}^{j-1}(C-\sigma_{i}I)^{-1}\right)v}.
\end{displaymath}
Making the abbreviation $C(\sigma) = C - \sigma I$, we can rewrite this as 
\begin{displaymath}
\kay_{j}(C,v, [\sigma_{1}, \ldots, \sigma_{j-1}]) = 
\prod_{i=1}^{j-1}C(\sigma_{i})^{-1}\,\spn{\prod_{i=1}^{j-1}C(\sigma_{i})v,\prod_{i=2}^{j-1}C(\sigma_{i})v, \ldots, v}.
\end{displaymath}
The span on the right-hand side involves only positive powers of $C$, so the shifts are irrelevant; it is just the standard
Krylov subspace $\kay_{j}(C,v)$.  Therefore 
\begin{equation}\label{def:altratkry}
\kay_{j}(C,v, [\sigma_{1}, \ldots, \sigma_{j-1}]) =\left(\prod_{i=1}^{j-1} (C - \sigma_{i}I)^{-1}\right)\kay_{j}(C,v).
\end{equation}

Given a pair $(A,B)$ with $B$ nonsingular, we define \emph{rational Krylov subspaces}  
\begin{displaymath}
\kay_{j}(A,B,v,[\sigma_{1},\ldots,\sigma_{j-1}]) \quad\mbox{and}\quad 
\ehl_{j}(A,B,v,[\sigma_{1},\ldots,\sigma_{j-1}])
\end{displaymath}
associated with the pair by 
\begin{displaymath}
\kay_{j}(A,B,v,[\sigma_{1},\ldots,\sigma_{j-1}]) = \kay_{j}(AB^{-1},v,[\sigma_{1},\ldots,\sigma_{j-1}])
\end{displaymath}
and
\begin{displaymath}
\ehl_{j}(A,B,v,[\sigma_{1},\ldots,\sigma_{j-1}]) = 
\kay_{j}(B^{-1}A,v,[\sigma_{1},\ldots,\sigma_{j-1}]),
\end{displaymath}
$j=1$, \ldots, $n$.  We have assumed for convenience that $B$ is nonsingular.  See \cite{CaMeVa19a} for a 
definition of these spaces that does not require this assumption.   We are using the symbol $\kay_{j}$ to denote
several different types of Krylov subspaces.  The meaning in each case is uniquely determined by the number
and type of arguments.

We will make use of the following result, which is Theorem~5.6 
in \cite{CaMeVa19a}.

\begin{proposition}\label{prop:ejkrylov}
Let $(A,B)$ be a proper upper Hessenberg pair with poles $[\sigma_{1},\ldots,\sigma_{n-1}]$.  Let 
$\eee_{j}  = \spn{e_{1},\ldots,e_{j}}$, as before.  Then for $j=1$, \ldots, $n-1$, 
\begin{displaymath}
\eee_{j} = \kay_{j}(A,B,e_{1}, [\sigma_{1},\ldots,\sigma_{j-1}]) = \ehl_{j}(A,B,e_{1}, [\sigma_{2}, \ldots,\sigma_{j}]).
\end{displaymath}
\end{proposition}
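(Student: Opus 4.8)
The plan is to establish the two claimed equalities by induction on $j$, using the characterization \eqref{def:altratkry} of rational Krylov subspaces to reduce both statements to facts about \emph{ordinary} Krylov subspaces, which in turn follow from the proper-Hessenberg structure of the pair. I would treat the two equalities separately but in parallel, since the $\kay$-side uses $C = AB^{-1}$ and the poles $[\sigma_1,\ldots,\sigma_{j-1}]$ leaving at the top, while the $\ehl$-side uses $C = B^{-1}A$ and the poles $[\sigma_2,\ldots,\sigma_j]$, and the bookkeeping of which pole goes where is exactly the subtle point the paper keeps flagging.

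For the $\kay$-side: by \eqref{def:altratkry}, $\kay_j(A,B,e_1,[\sigma_1,\ldots,\sigma_{j-1}]) = \left(\prod_{i=1}^{j-1}(AB^{-1}-\sigma_i I)^{-1}\right)\kay_j(AB^{-1},e_1)$. First I would show $\kay_j(AB^{-1},e_1) = \left(\prod_{i=1}^{j-1}(AB^{-1}-\sigma_i I)\right)\eee_j$, or rather the equivalent statement $\eee_j = \left(\prod_{i=1}^{j-1}(AB^{-1}-\sigma_i I)^{-1}\right)\kay_j(AB^{-1},e_1)$, by induction. The base case $j=1$ is $\eee_1 = \spn{e_1} = \kay_1$. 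For the inductive step, assume $\eee_{j-1} = \kay_{j-1}(A,B,e_1,[\sigma_1,\ldots,\sigma_{j-2}])$. It suffices to produce one vector in $\kay_j(A,B,e_1,[\sigma_1,\ldots,\sigma_{j-1}])$ that lies in $\eee_j$ but not in $\eee_{j-1}$, together with a dimension count. The natural candidate is $w = \left(\prod_{i=1}^{j-1}(AB^{-1}-\sigma_i I)^{-1}\right)v$ for a suitable $v \in \kay_j(AB^{-1},e_1)\setminus\kay_{j-1}(AB^{-1},e_1)$; equivalently, I want to show $(AB^{-1}-\sigma_{j-1}I)^{-1}u \in \eee_j$ whenever $u \in \eee_{j-1}$, and that one can choose $u$ so that the result is not in $\eee_{j-1}$. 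The key structural input is that $\sigma_{j-1} = a_{j,j-1}/b_{j,j-1}$ is the $(j-1)$st pole, so the pole pencil has the right triangular shape; concretely, $(A - \sigma_{j-1}B)$ maps $\eee_{j-1}$ into $\eee_{j-1}$ (because column $j-1$ of $A - \sigma_{j-1}B$ has a zero in row $j$), and $B^{-1}$ preserves $\eee_j$ only approximately — so I would instead work throughout with the pencil operators $(A-\sigma B)$ and $(A-\sigma B)^{-1}$ acting on the $\eee_k$, rather than with $AB^{-1}$, and only translate to $AB^{-1}$ at the end via the rescaling $AB^{-1} - \sigma I = (A-\sigma B)B^{-1}$.

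For the $\ehl$-side the argument is the mirror image: $\ehl_j(A,B,e_1,[\sigma_2,\ldots,\sigma_j]) = \kay_j(B^{-1}A,e_1,[\sigma_2,\ldots,\sigma_j]) = \left(\prod_{i=2}^{j}(B^{-1}A-\sigma_i I)^{-1}\right)\kay_j(B^{-1}A,e_1)$, and one runs the same induction, now using that $(A-\sigma_j B)$ acting on the \emph{right} structure controls things — more precisely, using the relation $B^{-1}A - \sigma I = B^{-1}(A-\sigma B)$ and the fact that $e_j^T(A-\sigma_j B)$ has support in positions $\{j,\ldots,n\}$ only when $\sigma_j$ is the $j$th pole, which is why the index set is shifted by one. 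I expect the genuine obstacle to be precisely this index-shift bookkeeping: making rigorous why the $\kay$-version uses poles $\sigma_1,\ldots,\sigma_{j-1}$ (the poles above position $j$, which are "leaving from the top") while the $\ehl$-version uses $\sigma_2,\ldots,\sigma_j$ (the poles below, "leaving from the bottom"), and checking that in each inductive step the newly introduced inverse factor $(A-\sigma_{j-1}B)^{-1}$ (resp.\ $(A-\sigma_j B)^{-1}$) indeed enlarges the subspace from $\eee_{j-1}$ to $\eee_j$ rather than staying inside $\eee_{j-1}$ — this is where properness (condition (i), that $a_{j,j-1}$ and $b_{j,j-1}$ are not both zero, plus (ii)–(iii) at the ends) is essential, since it guarantees the relevant vector has a nonzero component along $e_j$. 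Everything else — the $B^{-1}$-versus-pencil rescalings, the $\spn{\cdot}$ manipulations, the passage between "$\eee_j = (\text{product})\kay_j$" and "$\kay_j = (\text{product})^{-1}\eee_j$" — is routine linear algebra once the structural lemma about $(A-\sigma_k B)$ permuting the flag $\eee_1 \subset \eee_2 \subset \cdots$ in the appropriate one-step fashion is in hand. Since the statement is quoted as Theorem~5.6 of \cite{CaMeVa19a}, I would also simply cite that source for the full details and present the above as the conceptual skeleton.
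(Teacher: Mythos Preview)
The paper does not supply its own proof of this proposition; immediately after the statement it writes ``See \cite{CaMeVa19a} for the proof,'' deferring entirely to that reference. Your proposal to cite that source is therefore exactly what the paper does, and the inductive sketch you outline---driven by the structural observation that the $(j,j-1)$ entry of $A-\sigma_{j-1}B$ vanishes, so the operators $(A-\sigma_k B)^{\pm 1}$ move the flag $\eee_1\subset\eee_2\subset\cdots$ by one step, with properness guaranteeing the step is genuine---is a correct outline of how the argument in \cite{CaMeVa19a} proceeds.
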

See \cite{CaMeVa19a} for the proof.  Notice that in the $\ehl_{j}$ spaces the poles are 
$[\sigma_{2},\ldots,\sigma_{j}]$, starting from $\sigma_{2}$.
With Proposition~\ref{prop:ejkrylov} in hand, we can prove Theorem~\ref{thm:movespace}.  

\begin{proof}
Proposition~\ref{prop:firstcol} shows that $Q_{1}e_{1} =  \delta\,(A - \sigma_{0} B)(A - \sigma_{1}B)^{-1}e_{1}$
for some nonzero $\delta$.  This establishes the case $j=1$ of Theorem~\ref{thm:movespace}.  

Now consider $j>1$.  The transformation $\hat{A} - \lambda \hat{B} = Q_{j}^{*}(A - \lambda B)Z_{j-1}$ interchanges 
poles $\sigma_{j-1}$ and $\sigma_{j}$, so the ordered pole set of $(\hat{A},\hat{B})$ is 
$$[\sigma_{1},\ldots,\sigma_{j-2},\sigma_{j},\sigma_{j-1},\sigma_{j+1}, \ldots,\sigma_{n-1}].$$  
Applying 
Proposition~\ref{prop:ejkrylov} to $(\hat{A},\hat{B})$ we have 
\begin{displaymath}
\eee_{j} = \kay_{j}(\hat{A},\hat{B},e_{1},[\sigma_{1},\ldots,\sigma_{j-2},\sigma_{j}]).
\end{displaymath}
Therefore 
\begin{eqnarray*}
\cue_{j} = Q_{j}\eee_{j} & = & Q_{j}\kay_{j}(\hat{A},\hat{B},e_{1},[\sigma_{1}, \ldots,\sigma_{j-2},\sigma_{j}])  \\
& = & \kay_{j}(A,B,Q_{j}e_{1},[\sigma_{1}, \ldots,\sigma_{j-2},\sigma_{j}]),
\end{eqnarray*}
Noting that $Q_{j}e_{1} = e_{1}$, using the abbreviations $C = AB^{-1}$ and $C(\sigma) = AB^{-1} - \sigma I$, 
and using  (\ref{def:altratkry}) twice, we obtain 
\begin{eqnarray*}
\cue_{j} & = & \kay_{j}(AB^{-1},e_{1},[\sigma_{1}, \ldots,\sigma_{j-2},\sigma_{j}]) \\
& = &  C(\sigma_{j})^{-1} \left( \prod_{i=1}^{j-2}C(\sigma_{i})^{-1}\right)\kay_{j}(C,e_{1}) \\
& = &  C(\sigma_{j})^{-1}C(\sigma_{j-1})\left(\prod_{i=1}^{j-1}C(\sigma_{i})^{-1}\right)\kay_{j}(C,e_{1}) \\
& = &  C(\sigma_{j})^{-1}C(\sigma_{j-1})\kay_{j}(A,B,e_{1},[\sigma_{1},\ldots,\sigma_{j-1}]) \\
& = &  C(\sigma_{j})^{-1}C(\sigma_{j-1})\eee_{j}. 
\end{eqnarray*}
In the final step we used Proposition~\ref{prop:ejkrylov} again.  Since
$C(\sigma_{j})^{-1}C(\sigma_{j-1}) = C(\sigma_{j-1})C(\sigma_{j})^{-1} 
= (AB^{-1} - \sigma_{j-1}I)(AB^{-1} - \sigma_{j}I)^{-1} = (A - \sigma_{j-1}B)(A - \sigma_{j}B)^{-1}$, 
we get the desired result 
$\cue_{j} = (A - \sigma_{j-1}B)(A - \sigma_{j}B)^{-1}\eee_{j}$.

In this argument we have assumed that $B^{-1}$ exists.  However, the result also holds for singular $B$
by a continuity argument.  

Now consider the spaces $\zee_{j-1}$.   In the case $j=2$ we have $\hat{A}  - \lambda \hat {B}
= Q_{2}^{*}(A - \lambda B)Z_{1}$.   Substituting $\lambda = \sigma_{2}$ and solving for $Z_{1}$,
we have $Z_{1} = (A - \sigma_{2}B)^{-1}Q_{2}(\hat{A} - \sigma_{2}\hat{B})$.  The ordered pole set for 
$(\hat{A},\hat{B})$ is $[\sigma_{2},\sigma_{1},\sigma_{3}, \ldots,\sigma_{n-1}]$, so 
$(\hat{A} - \sigma_{2}\hat{B})e_{1} = \gamma e_{1}$ for some nonzero $\gamma$.  Similarly  
$(A - \sigma_{1}B)e_{1} = \delta e_{1}$ for some nonzero $\delta$.  Therefore  
$$Z_{1} e_{1} = \gamma (A - \sigma_{2}B)^{-1}Q_{2}e_{1} = \gamma (A - \sigma_{2}B)^{-1}e_{1} = 
\gamma\delta^{-1}(A - \sigma_{2}B)^{-1}(A - \sigma_{1}B)e_{1}.$$  This proves that 
\begin{displaymath}
\zee_{1} = (A - \sigma_{2}B)^{-1}(A - \sigma_{1}B)\eee_{1},
\end{displaymath}
as desired.

For $j > 2$ we have $\hat{A} - \lambda \hat{B} = Q_{j}^{*}(A - \lambda B)Z_{j-1}$.
Arguing just as we did for  $\cue_{j}$, we have
\begin{eqnarray*}
\zee_{j-1} = Z_{j-1}\eee_{j-1} & = & Z_{j-1}\ehl_{j-1}(\hat{A},\hat{B},e_{1},[\sigma_{2}, \ldots, \sigma_{j-2}, \sigma_{j}]) \\
& = & \ehl_{j-1}(A,B,Z_{j-1}e_{1},[\sigma_{2}, \ldots, \sigma_{j-2},\sigma_{j}]).
\end{eqnarray*}
Using  $Z_{j-1}e_{1} = e_{1}$, and making the abbreviations $D = B^{-1}A$ and 
$D(\sigma) = B^{-1}A - \sigma I$, we have 
\begin{eqnarray*}
\zee_{j-1} & = & \ehl_{j-1}(A,B,e_{1},[\sigma_{2}, \ldots, \sigma_{j-2},\sigma_{j}]) \\
& = & \kay_{j-1}(D,e_{1},[\sigma_{2}, \ldots, \sigma_{j-2},\sigma_{j}]) \\
& = & D(\sigma_{j})^{-1}\left( \prod_{i=2}^{j-2} D(\sigma_{i})^{-1}\right)\kay_{j-1}(D,e_{1}) \\
& = & D(\sigma_{j})^{-1} D(\sigma_{j-1})\left( \prod_{i=2}^{j-1} D(\sigma_{i})^{-1}\right)\kay_{j-1}(D,e_{1}) \\
& = & D(\sigma_{j})^{-1} D(\sigma_{j-1}) \kay_{j-1}(D,e_{1},[\sigma_{2},\ldots,\sigma_{j-1}]) \\
& = & (A - \sigma_{j}B)^{-1}(A - \sigma_{j-1}B)\ehl_{j-1}(A,B,e_{1}, [\sigma_{2}, \ldots, \sigma_{j-1}]) \\
& = & (A - \sigma_{j}B)^{-1}(A - \sigma_{j-1}B)\eee_{j-1}. 
\end{eqnarray*}

\hfill\end{proof}

\begin{remark}
We used Proposition~\ref{prop:firstcol} to prove the case $j=1$, but we did not use 
Proposition~\ref{prop:lastrow}.  In connection with this we remark that Theorem~\ref{thm:movespace} 
immediately implies the dual results
\begin{displaymath}
\cue_{j}^{\perp} = (A^{*} - \overline{\sigma}_{j-1} B^{*})^{-1}(A^{*} - \overline{\sigma}_{j} B^{*})\eee_{j}^{\perp}
\end{displaymath}
and 
\begin{displaymath}
\zee_{j-1}^{\perp} = (A^{*} - \overline{\sigma}_{j} B^{*})(A^{*} - \overline{\sigma}_{j-1}B^{*})^{-1}\eee_{j-1}^{\perp},
\end{displaymath}
obtained by noting that $\you = C\ess$ if and only if $\you^{\perp} = (C^{*})^{-1}\ess^{\perp}$.  We could equally well have 
derived the dual results first and then deduced Theorem~\ref{thm:movespace}.  In that case we would use 
Proposition~\ref{prop:lastrow} to prove the case $j=n$, and not use Proposition~\ref{prop:firstcol} at all.  From 
Proposition~\ref{prop:lastrow}  with $\tau = \sigma_{n}$ we have immediately
\begin{displaymath}
Z_{n-1}e_{n} = \overline{\delta}\,(A^{*} - \overline{\sigma}_{n}B^{*})(A^{*} - \overline{\sigma}_{n-1}B^{*})^{-1}e_{n}, 
\end{displaymath}  
which implies
\begin{displaymath}
\zee_{n-1}^{\perp} = (A^{*} - \overline{\sigma}_{n}B^{*})(A^{*} - 
\overline{\sigma}_{n-1}B^{*})^{-1}\eee_{n-1}^{\perp}, 
\end{displaymath}
the case $j = n$ of the dual result. 
\end{remark}

\section{Using Theorem~\ref{thm:movespace}}\label{sec:moveuse}

In all of the convergence theorems of the previous section we have actions of the form $\cue_{k} = r(AB^{-1})\eee_{k}$
and $\zee_{k} = r(B^{-1}A)\eee_{k}$, where $r$ is a rational function, e.g.\ $r(z) = (z - \sigma_{j-1})/(z - \sigma_{j})$.   
In the following lemma the functions $r$ and $s$ can be any functions  
defined on the spectrum of the pencil $A - \lambda B$, but in our applications they will always be rational.  
In this case, being defined on the
spectrum of $A - \lambda B$ just means that none of the poles are eigenvalues.  

\begin{lemma}\label{lem:accumstep}
Consider two successive changes of coordinate system 
\begin{displaymath}
\tilde{A} - \lambda\tilde{B} = \tilde{Q}^{*}(A - \lambda B)\tilde{Z} \quad\mbox{and}\quad
\hat{A} - \lambda \hat{B} = \hat{Q}^{*}(\tilde{A} - \lambda\tilde{B})\hat{Z},
\end{displaymath}
so that
\begin{displaymath}
\hat{A} - \lambda \hat{B} = Q^{*}(A - \lambda B)Z,
\quad \mbox{where}\quad  Q = \tilde{Q}\hat{Q} \quad\mbox{and}\quad Z = \tilde{Z}\hat{Z}.
\end{displaymath}
For $k=1$, \ldots, $n-1$, if 
\begin{displaymath}
\tilde{Q}\eee_{k} = r(AB^{-1})\eee_{k} \quad\mbox{and}\quad \hat{Q}\eee_{k} = s(\tilde{A}\tilde{B}^{-1})\eee_{k},
\end{displaymath} 
then
\begin{displaymath}
Q\eee_{k} = sr(AB^{-1})\eee_{k},
\end{displaymath}
 where $sr$ is the pointwise product of $s$ and $r$.
If 
\begin{displaymath}
\tilde{Z}\eee_{k} = r(B^{-1}A)\eee_{k} \quad\mbox{and} \quad \hat{Z}\eee_{k} = s(\tilde{B}^{-1}\tilde{A})\eee_{k}, 
\end{displaymath}
then 
\begin{displaymath}
Z\eee_{k} = sr(B^{-1}A)\eee_{k}.
\end{displaymath}  
\end{lemma}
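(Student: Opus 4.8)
The plan is to reduce the whole statement to one elementary fact: the two pieces of a change of coordinate system act on the operators $AB^{-1}$ and $B^{-1}A$ by \emph{similarity}, not merely by the equivalence that acts on the pencil itself. From $\tilde A = \tilde Q^{*}A\tilde Z$ and $\tilde B = \tilde Q^{*}B\tilde Z$ one computes
\begin{displaymath}
\tilde A\tilde B^{-1} = \tilde Q^{*}A\tilde Z\,\tilde Z^{-1}B^{-1}\tilde Q = \tilde Q^{*}(AB^{-1})\tilde Q,
\qquad
\tilde B^{-1}\tilde A = \tilde Z^{*}B^{-1}\tilde Q\,\tilde Q^{*}A\tilde Z = \tilde Z^{*}(B^{-1}A)\tilde Z,
\end{displaymath}
using unitarity of $\tilde Q$ and $\tilde Z$. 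In particular $\tilde A\tilde B^{-1}$ and $AB^{-1}$ have the same spectrum, so if $s$ is defined on the spectrum of $A-\lambda B$ it is also defined on the spectrum of $\tilde A\tilde B^{-1}$, and the functional-calculus identity $s(\tilde A\tilde B^{-1}) = \tilde Q^{*}\,s(AB^{-1})\,\tilde Q$ holds; likewise $s(\tilde B^{-1}\tilde A) = \tilde Z^{*}\,s(B^{-1}A)\,\tilde Z$.

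With these identities the first assertion follows from a short chain. I would write $Q\eee_{k} = \tilde Q\hat Q\eee_{k}$, substitute $\hat Q\eee_{k} = s(\tilde A\tilde B^{-1})\eee_{k}$, replace $s(\tilde A\tilde B^{-1})$ by $\tilde Q^{*}s(AB^{-1})\tilde Q$, cancel $\tilde Q\tilde Q^{*}=I$, and then use $\tilde Q\eee_{k} = r(AB^{-1})\eee_{k}$ to arrive at $s(AB^{-1})r(AB^{-1})\eee_{k}$. Since $s(AB^{-1})$ and $r(AB^{-1})$ are both functions of the one matrix $AB^{-1}$ they commute, and their product equals $(sr)(AB^{-1})$ with $sr$ the pointwise product, giving $Q\eee_{k} = (sr)(AB^{-1})\eee_{k}$. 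The statement for $Z$ is proved verbatim with $(\tilde Q,Q,AB^{-1})$ replaced by $(\tilde Z,Z,B^{-1}A)$ and the second similarity identity.

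There is no substantive obstacle; the only point that wants a moment of care is the bookkeeping of the order of the two factors — the \emph{second} move contributes the \emph{outer} function $s$, so the accumulated operator is $sr$ rather than $rs$. This is immaterial here because $s$ and $r$ are scalar-valued and commute, but getting it straight matters when the lemma is applied in Section~\ref{sec:moveuse}, where $r$ and $s$ are the specific rational functions produced by Theorem~\ref{thm:movespace} and one wants to track exactly which pole is entering and which is leaving at each move. One should also note that the hypotheses tacitly assume $B$ and $\tilde B$ nonsingular, so that $AB^{-1}$ and $\tilde A\tilde B^{-1}$ make sense; this is consistent with the standing conventions of the convergence section, and the singular-$B$ case can be recovered afterwards, if needed, by the same continuity argument used in the proof of Theorem~\ref{thm:movespace}.
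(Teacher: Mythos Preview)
Your proof is correct and is essentially the same as the paper's: both rest on the intertwining relation $\tilde Q\,s(\tilde A\tilde B^{-1}) = s(AB^{-1})\,\tilde Q$ (equivalently, your similarity identity $s(\tilde A\tilde B^{-1}) = \tilde Q^{*}s(AB^{-1})\tilde Q$) and then chain through $Q\eee_{k} = \tilde Q\hat Q\eee_{k}$ exactly as you do. The paper simply states the intertwining relation without deriving it, whereas you spell out the similarity computation and functional-calculus step; your added remarks on factor ordering and the nonsingularity of $B$ are accurate and consistent with the paper's conventions.
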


\begin{proof}
Noting that $\tilde{Q}\,s(\tilde{A}\tilde{B}^{-1}) = s(AB^{-1})\tilde{Q}$, we have 
$$Q\eee_{k} = \tilde{Q}\hat{Q}\eee_{k} = \tilde{Q}\,s(\tilde{A}\tilde{B}^{-1})\eee_{k} = s(AB^{-1})\tilde{Q}\eee_{k} = s(AB^{-1})r(AB^{-1})\eee_{k},$$ 
so 
$Q\eee_{k} = sr(AB^{-1})\eee_{k}$.   
The result for $Z\eee_{k}$ is proved similarly, using 
$\tilde{Z}\,s(\tilde{B}^{-1}\tilde{A}) = s(B^{-1}A)\tilde{Z}$.
\hfill\end{proof}

Clearly this lemma can be extended by induction to three or more successive changes of coordinate system, 
and that's how we are going to use it.  

\subsection*{Proof of Theorem~\ref{thm:basicalgspace}}

As a first application of Theorem~\ref{thm:movespace}, we show that it can be used to prove 
Theorem~\ref{thm:basicalgspace}.  

According to Theorem~\ref{thm:basicalgspace}, 
for each $k$ the basic algorithm effects 
a transformation 
\begin{equation}\label{eq:kstep}
\cue_{k} = (A - \rho B)(A - \sigma_{k}B)^{-1}\eee_{k}.
\end{equation}
Let us see why this is so.  Recall that the basic algorithm begins with a move of type I that introduces 
the shift $\rho$ as a pole at the top of the pencil.  It then does a sequence of moves of type II that  swap 
$\rho$ with the other poles one by one.  For a given $k$, 
most of these moves have no effect on $\eee_{k}$.  
%That is, we have  $\cue_{k} = \eee_{k}$.  
The only exception is the $k$th move, the case $j=k$ in Theorem~\ref{thm:movespace}.  
This is where we need to focus.  

One iteration of the basic algorithm performs the equivalence 
\begin{displaymath}
\hat{A} - \lambda \hat{B} = Q^{*}(A - \lambda B)Z,
\end{displaymath}
where $Q$ and $Z$ are products of core transformations:
\begin{displaymath}
Q = Q_{1}Q_{2}\cdots Q_{n-1}, \qquad Z = Z_{1}Z_{2} \cdots Z_{n-1}.
\end{displaymath}
The core $Q_{1}$ is the one that replaces pole $\sigma_{1}$ with the shift $\rho$.  $Q_{2}$ (together with $Z_{1}$)
swaps $\rho$ with $\sigma_{2}$, $Q_{3}$ (together with $Z_{2}$) swaps $\rho$ with $\sigma_{3}$, and so on.
$Z_{n-1}$ removes $\rho$ and installs a new pole $\sigma_{n}$.  We are interested in the action of $Q_{k}$
(together with $Z_{k-1}$), which swaps $\rho$ with $\sigma_{k}$.  Thus we factor $Q$ and $Z$ as 
\begin{displaymath}
Q = \tilde{Q}Q_{k}\hat{Q}, \qquad  Z = \tilde{Z}Z_{k-1}\hat{Z}, 
\end{displaymath}
where $\tilde{Q} = Q_{1}\cdots Q_{k-1}$, and so on.  Now we break the transformation into three parts:
\begin{displaymath}
\tilde{A} - \lambda \tilde{B} = \tilde{Q}^{*}(A - \lambda B)\tilde{Z},
\end{displaymath}
\begin{equation}\label{eq:ktrans}
\check{A} - \lambda \check{B} = Q_{k}^{*}(\tilde{A} - \lambda\tilde{B})Z_{k-1},
\end{equation}
and
\begin{displaymath}
\hat{A} - \lambda \hat{B} = \hat{Q}^{*}(\check{A} - \lambda \check{B})\hat{Z}.
\end{displaymath}
Because each of the cores $Q_{1}$, \ldots, $Q_{k-1}$ leaves $\eee_{k}$ invariant,
we have 
\begin{displaymath}
\tilde{Q}\eee_{k} = \eee_{k} = r(AB^{-1})\eee_{k}, \quad\mbox{where $r(z) = 1$.}
\end{displaymath}
We can apply Theorem~\ref{thm:movespace} with $j=k$ to the transformation (\ref{eq:ktrans}), 
taking into account that the poles that are swapped in the $k$th 
move are $\rho$ and $\sigma_{k}$, to get
\begin{displaymath}
Q_{k}\eee_{k} = (\tilde{A} - \rho \tilde{B})(\tilde{A} - \sigma_{k}\tilde{B})^{-1}\eee_{k} = 
s(\tilde{A}\tilde{B}^{-1})\eee_{k}, \quad \mbox{where $s(z) = (z - \rho)/(z - \sigma_{k})$}.
\end{displaymath}
Finally, noting that $Q_{k+1}$, \ldots, $Q_{n-1}$ all leave $\eee_{k}$ invariant, we have
\begin{displaymath}
\hat{Q}\eee_{k} = \eee_{k} = t(\check{A}\check{B}^{-1})\eee_{k}, \quad\mbox{where $t(z) = 1$.}
\end{displaymath}
Now, applying Lemma~\ref{lem:accumstep} to the product $Q = \tilde{Q}Q_{k}\hat{Q}$, we get
\begin{displaymath}
Q\eee_{k} = tsr (AB^{-1})\eee_{k} = s(AB^{-1})\eee_{k},
\end{displaymath}
which is exactly (\ref{eq:kstep}).  

We can prove the $Z$ part of Theorem~\ref{thm:basicalgspace} in exactly the same way.  
We have 
\begin{displaymath}
\tilde{Z}\eee_{k-1} = \eee_{k-1} = 1(B^{-1}A)\eee_{k-1},
\end{displaymath}
and by Theorem~\ref{thm:movespace} with $j=k$,
\begin{displaymath}
Z_{k-1}\eee_{k-1} = (\tilde{A} - \sigma_{k}\tilde{B})^{-1}(\tilde{A} - \rho \tilde{B})\eee_{k-1} 
= s(\tilde{B}^{-1}\tilde{A})\eee_{k-1},
\end{displaymath}
and finally 
\begin{displaymath}
\hat{Z}\eee_{k-1} = \eee_{k-1} = 1(\check{B}^{-1}\check{A})\eee_{k-1}.
\end{displaymath}
Therefore, by Lemma~\ref{lem:accumstep},  
\begin{displaymath}
Z\eee_{k-1} = s(B^{-1}A)\eee_{k-1} = (A - \sigma_{k}B)^{-1}(A - \rho B)\eee_{k-1}.
\end{displaymath}
Adding one to the index $k$, we get the $Z$ part of Theorem~\ref{thm:basicalgspace}, thereby completing the proof.

\subsection*{Generalization of the proof}
The basic algorithm is just one of many possible algorithms that make use of moves of types I and II
on proper Hessenberg forms.  We have already pointed out that one could run the algorithm in the opposite
direction or in both directions at once.  There are lots of other possibilities, and we will look at 
some in what follows.  

From  our proof of Theorem~\ref{thm:basicalgspace} it should now be clear that we will be able to use 
Theorem~\ref{thm:movespace}, together with Lemma~\ref{lem:accumstep}, to analyze the action of 
any algorithm that acts on a proper Hessenberg pencil by moves of types I and II.  Consider a transformation
\begin{equation}\label{eq:generalg}
\hat{A} - \lambda \hat{B} = Q^{*}(A - \lambda B)Z,
\end{equation}
where $Q$ and $Z$ are products of core transformations generated by any sequence of moves of type I and II.
If we want to find the action of $Q$ on $\eee_{k}$ for some $k$, we need only look at the core transformations 
of the form $Q_{k}$, i.e.\ the ones that act in the $(k,k+1)$ plane.  Thus we factor $Q$ into a product
of the form 
\begin{equation}\label{eq:qfactlong}
Q = \tilde{Q}Q_{1,k}\check{Q}Q_{2,k}\hat{Q}Q_{3,k} \cdots , 
\end{equation}
where $\tilde{Q}$, $\check{Q}$, \ldots\, are products of core transformations that do not act in the $(k,k+1)$ plane
and therefore satisfy $\tilde{Q}\eee_{k} = \eee_{k}$, $\check{Q}\eee_{k} = \eee_{k}$, and so on, and $Q_{1,k}$, 
$Q_{2,k}$, \ldots\, are cores that do act in the $(k,k+1)$ plane.   Let us say there are $m$ such cores 
$Q_{1,k}$, \ldots, $Q_{m,k}$.

The transforming matrix $Z$ has a fully analogous factorization 
\begin{equation}\label{eq:zfactlong}
Z = \tilde{Z}Z_{1,k-1}\check{Z}Z_{2,k-1}\hat{Z}Z_{3,k-1} \cdots , 
\end{equation}
assuming we use the convention that moves of type I have the form $Q_{1}^{*}(A - \lambda B)Z_{0}$  with 
$Z_{0} = I$ or  $Q_{n}^{*}(A - \lambda B)Z_{n-1}$ with $Q_{n}=I$.  We have $\tilde{Z}\eee_{k-1} = \eee_{k-1}$, 
$\check{Z}\eee_{k-1} = \eee_{k-1}$, et cetera.   The transformations that act nontrivially on $\eee_{k-1}$ are
$Z_{1,k-1}$, \ldots, $Z_{m,k-1}$.

Suppose that on the move corresponding to the transformations $Q_{j,k}$ and $Z_{j,k-1}$, the poles that get 
swapped  are $\sigma_{j,k-1}$ and $\sigma_{j,k}$.  Then, according to Theorem~\ref{thm:movespace}, 
the function associated with this swap is 
$r_{j}(z) = (z - \sigma_{j,k-1})/(z -\sigma_{j,k})$.   Let $r$ denote the product of these functions: 
\begin{equation}\label{eq:ratrdef}
r(z) = r_{1}(z) \cdots r_{m}(z) = \prod_{j=1}^{m}\ \frac{z- \sigma_{j,k-1}}{z - \sigma_{j,k\phantom{-1}}}.
\end{equation}
Then, applying Lemma~\ref{lem:accumstep} to the long product of transformations defined by (\ref{eq:qfactlong}) 
and (\ref{eq:zfactlong}), we find that the action of $Q$ on $\eee_{k}$ and of $Z$ on $\eee_{k-1}$ is given
by 
\begin{equation}\label{eq:genact}
\cue_{k} = Q\eee_{k} = r(AB^{-1}) \eee_{k} \quad\mbox{and}\quad \zee_{k-1} = Z\eee_{k-1} = r(B^{-1}A)\eee_{k-1}.
\end{equation}

We summarize these findings as a theorem.

\begin{theorem}\label{thm:generalg}
Consider a transformation (\ref{eq:generalg}), where $Q$ and $Z$ are products of core transformations 
generated by any sequence of moves of types I and II.   For some $k$ suppose that $m$ of the moves acted
at the $k$th position, swapping poles $\sigma_{j,k-1}$ and $\sigma_{j,k}$ for  $j=1$, \ldots, $m$.  Define a rational 
function $r$ by (\ref{eq:ratrdef}).  Then the action of $Q$ on $\eee_{k}$ and of $Z$ on $\eee_{k-1}$ is given by 
(\ref{eq:genact}).   The transformation (\ref{eq:generalg}) transforms $\cue_{k}$ back to $\eee_{k}$ and 
$\zee_{k-1}$ back to $\eee_{k-1}$. 
\end{theorem}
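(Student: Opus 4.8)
The plan is to assemble the theorem directly from the two tools already proved: Theorem~\ref{thm:movespace}, which describes the action of a single move, and Lemma~\ref{lem:accumstep}, which tells us how to multiply such actions when changes of coordinate system are composed. The key observation is that the full transforming matrix $Q$ is a product of core transformations, one contributed by each move in the sequence, and similarly for $Z$. Since we only care about the action of $Q$ on $\eee_{k}$ (and of $Z$ on $\eee_{k-1}$), we sort the cores into two groups: those that act in the $(k,k+1)$ plane (call them $Q_{1,k}, \ldots, Q_{m,k}$, in the order they occur) and all the rest. The cores in the second group act only in other coordinate planes and hence fix $\eee_{k}$, so each of them contributes the trivial rational function $r(z)=1$ when we invoke Theorem~\ref{thm:movespace}. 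This is exactly the factorization displayed in (\ref{eq:qfactlong}) and (\ref{eq:zfactlong}), with the intervening blocks $\tilde{Q}, \check{Q}, \hat{Q}, \ldots$ satisfying $\tilde{Q}\eee_{k}=\eee_{k}$ etc.

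First I would record that, for each block of ``inert'' cores, Theorem~\ref{thm:movespace} (or, more simply, the trivial case $r\equiv 1$) gives the action $\eee_{k}\mapsto\eee_{k}$ expressed as $1\cdot(AB^{-1})\eee_{k}$, and likewise $\eee_{k-1}\mapsto\eee_{k-1}$ as $1\cdot(B^{-1}A)\eee_{k-1}$, relative to whatever intermediate pencil is current at that stage. Next, for the $j$th active move, Theorem~\ref{thm:movespace} with the index there set to $k$ says precisely that $Q_{j,k}$ acts on $\eee_{k}$ as multiplication by $r_{j}(z)=(z-\sigma_{j,k-1})/(z-\sigma_{j,k})$ applied to the then-current $AB^{-1}$, and $Z_{j,k-1}$ acts on $\eee_{k-1}$ as $r_{j}$ applied to the then-current $B^{-1}A$; here $\sigma_{j,k-1}$ and $\sigma_{j,k}$ are, by hypothesis, the two poles swapped on that move. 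Then I would apply Lemma~\ref{lem:accumstep}, extended by induction to the whole chain of changes of coordinate system (the lemma's authors explicitly note this extension), to conclude that the composite action of $Q$ on $\eee_{k}$ is multiplication by the pointwise product of all the $r_{j}$ together with the trivial factors, i.e.\ by $r(z)=\prod_{j=1}^{m}r_{j}(z)$ as defined in (\ref{eq:ratrdef}), evaluated at $AB^{-1}$. The analogous statement for $Z$ and $\eee_{k-1}$ with $B^{-1}A$ follows in exactly the same way. This yields (\ref{eq:genact}).

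The last sentence of the theorem --- that the change of coordinates (\ref{eq:generalg}) carries $\cue_{k}$ back to $\eee_{k}$ and $\zee_{k-1}$ back to $\eee_{k-1}$ --- is immediate from the definitions: $\cue_{k}=Q\eee_{k}$, so $Q^{*}\cue_{k}=\eee_{k}$, and $\zee_{k-1}=Z\eee_{k-1}$, so the right factor $Z$ in (\ref{eq:generalg}) pulls $\zee_{k-1}$ back to $\eee_{k-1}$; this is the same bookkeeping already used for Theorems~\ref{thm:qzspace}, \ref{thm:basicalgspace}, and \ref{thm:movespace}.

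I expect the only real subtlety, and hence the main obstacle, to be purely organizational rather than mathematical: one must be careful that at each active move the poles being swapped are genuinely $\sigma_{j,k-1}$ and $\sigma_{j,k}$ in the indexing convention of Theorem~\ref{thm:movespace} (i.e.\ that ``acting at the $k$th position'' means swapping the poles sitting in positions $k-1$ and $k$ of the current pole list), and that the intermediate pencils fed into Lemma~\ref{lem:accumstep} are correctly threaded so that the similarity $\tilde Q\, s(\tilde A\tilde B^{-1})=s(AB^{-1})\tilde Q$ used in the lemma applies at each stage. A secondary point is the standing assumption that $B$ is nonsingular and that no pole or shift is an eigenvalue, so that all the inverses $(A-\sigma B)^{-1}$ and the rational functions of $AB^{-1}$ and $B^{-1}A$ are well defined; as in Theorem~\ref{thm:movespace}, the singular-$B$ case can be recovered by a continuity argument. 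Once these conventions are pinned down, the proof is a direct bookkeeping exercise combining the two earlier results, so it can be kept short.
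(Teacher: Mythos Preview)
Your proposal is correct and follows essentially the same route as the paper: the paper also factors $Q$ and $Z$ as in (\ref{eq:qfactlong}) and (\ref{eq:zfactlong}), notes that the inert blocks fix $\eee_{k}$ and $\eee_{k-1}$, applies Theorem~\ref{thm:movespace} to each active move to extract the factor $r_{j}$, and then invokes Lemma~\ref{lem:accumstep} (extended by induction) to obtain the product $r$. The paper in fact develops this argument in the text immediately preceding the theorem statement and then records Theorem~\ref{thm:generalg} as a summary, so your write-up matches it almost step for step.
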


\section{Variations on the basic algorithm} \label{sec:variations} 

In this section we consider algorithms built exclusively from moves of types I and II.   
Since the moves are backward stable, the resulting algorithms are also backward stable.  
We do not claim that all of the ideas presented here will result in practical algorithms;  
some of them are quite speculative.

The basic algorithm  (like the single-shift bulge-chasing and core-chasing algorithms)
takes a single shift, inserts it into the top of the pencil, and chases it to the bottom.  
This algorithm suffers from inefficient use of cache memory and negligible potential for 
parallelism.   In the case of bulge-chasing algorithms the problem was remedied by 
selecting a large number of shifts at once, creating many small bulges one after the
other, and chasing this chain of bulges together to the bottom of the matrix or 
pencil   \cite{BrByMa01,Lang97,Lang98}.   This allows the use of Level 3 BLAS and
therefore efficient cache use.  It also provides an opportunity for parallelism \cite{GrKaKr10}.

\subsection*{Chasing multiple shifts at once}
The same remedy works for pole-swapping algorithms, as was already mentioned in 
\cite{Cam19,CaMeVa19a,CaMeVa19b}.  We can choose $m$ shifts $\rho_{1}$, \ldots, $\rho_{m}$, where typically
$1 \ll m \ll n$.\footnote{One way to obtain $m$ shifts is to use an auxiliary routine to compute
the eigenvalues of the lower-right-hand $m \times m$ subpencil of $A - \lambda B$, and use
these as the shifts.}  Suppose the poles of $A - \lambda B$ are 
\begin{displaymath}
\sigma_{1}, \, \ldots,\, \sigma_{m},\, \sigma_{m+1}, \, \ldots,\, \sigma_{n}.
\end{displaymath}
By a sequence of moves of types I and II we can replace $\sigma_{1}$, \ldots, $\sigma_{m}$ 
by $\rho_{1}$, \ldots, $\rho_{m}$, so that the poles of the new pencil are
\begin{displaymath}
\rho_{1}, \, \ldots,\, \rho_{m},\, \sigma_{m+1}, \, \ldots,\, \sigma_{n}.
\end{displaymath}
Then we can chase these $m$ shifts together to the bottom, creating enough arithmetic to make 
efficient use of cache.  To be precise, in the first step we would swap $\sigma_{m+1}$ with $\rho_{m}$,
then $\sigma_{m+1}$ with $\rho_{m-1}$, and so on.  Eventually we swap $\sigma_{m+1}$ with $\rho_{1}$, putting
$\sigma_{m+1}$ at the top.  Then we go on to the next step.

We can pass a chain of shifts from top to bottom, and we can equally well pass a chain from bottom
to top.  If we wish, we can pass chains in both directions at once.  Suppose we have shifts $\rho_{1}$, \ldots,
$\rho_{m}$ that we wish to chase from top to bottom and shifts $\tau_{1}$, \ldots, $\tau_{m}$ that we wish 
to chase from bottom to top.  Using moves of types I and II we can introduce them:
\begin{displaymath}
\rho_{1},\, \ldots,\, \rho_{m}, \sigma_{m+1}, \, \ldots \, \sigma_{n-m-1}, \, \tau_{1}, \, \ldots, \, \tau_{m}.
\end{displaymath}
We then chase the $\rho$'s downward and the $\tau$'s upward.  The two chains pass through each other, 
and eventually we get to the position 
\begin{displaymath}
\tau_{1},\, \ldots,\, \tau_{m}, \sigma_{m+1}, \, \ldots \, \sigma_{n-m-1}, \, \rho_{1}, \, \ldots, \, \rho_{m}.
\end{displaymath}
The reader can check that the poles in the middle, $\sigma_{m+1}$, \ldots, $\sigma_{n-m-1}$, get moved 
around in the process, but they end up exactly where they started.  At this point we can regard the iteration
as complete, or we can ``complete'' the iteration by removing the $\tau_{i}$ and $\rho_{i}$ from the pencil 
and replacing them with new sets of shifts.  

Let's see what Theorem~\ref{thm:generalg} tells us about this bi-directional procedure.  Let
\begin{displaymath}
r(z) = \prod_{i=1}^{m}\frac{z - \rho_{i}}{z - \tau_{i}}.
\end{displaymath}
Then for $k=m+1$, \ldots, $n-m$ we have the action
\begin{displaymath}
\cue_{k} = Q\eee_{k} = r(AB^{-1}) \eee_{k} \quad\mbox{and}\quad \zee_{k-1} = Z\eee_{k-1} = r(B^{-1}A)\eee_{k-1}.
\end{displaymath}
The reason for this is that each of the $\rho_{i}$ passes downward through the $k$th position, causing a factor 
$z -\rho_{i}$, and each of the $\tau_{i}$ passes upward, causing a factor $(z -\tau_{i})^{-1}$.  This isn't all that 
happens at position $k$, but it's all that matters.  To see this, consider, for example, a position $k$ at which all
of the $\rho_{i}$ pass through before any of the $\tau_{i}$ get there.  Passing each $\rho_{i}$ downward requires
also passing a $\sigma_{j}$ upward, causing a factor $(z - \sigma_{j})^{-1}$.  Later on, when the $\tau_{i}$ are 
being passed upward, each $\sigma_{j}$ that was previously passed upward 
gets passed downward through the $k$th position, causing a factor
$z - \sigma_{j}$.  The factors $(z - \sigma_{j})^{-1}$ and $z -\sigma_{j}$ cancel each other out.  We know that this
must happen for each $\sigma_{j}$ because each $\sigma_{j}$ starts and ends in the same position. 

\subsection*{An optimistic scenario}
Consider a situation in which we have in hand the information
that we need to split the problem.  Suppose we know a $k$  (with $m+1 \leq k \leq n-m-1$) 
where (we think) we can split the pencil, and suppose that we have in mind 
an $(m,m)$  rational function 
\begin{displaymath}
r(z) = \prod_{i=1}^{m}\frac{z - \rho_{i}}{z - \tau_{i}}
\end{displaymath}
that can (nearly) split it.  By this we mean that $r(AB^{-1})\eee_{k}$ 
is (nearly) invariant under $AB^{-1}$ and $r(B^{-1}A)\eee_{k}$ is (nearly) invariant
under $B^{-1}A$.  If we then take the $\rho_{i}$ as shifts to be passed downward and the $\tau_{i}$
as shifts to be passed upward, we will get both $\cue_{k} = Q\eee_{k} = r(AB^{-1})\eee_{k}$ and 
$\zee_{k} = Z\eee_{k} = r(B^{-1}A)\eee_{k}$.  The change of variables 
$\hat{A} - \lambda \hat{B} = Q^{*}(A - \lambda B)Z$ maps both of these spaces back to $\eee_{k}$.  
Thus $\eee_{k}$ is (nearly) invariant under both $\hat{A}\hat{B}^{-1}$ and $\hat{B}^{-1}\hat{A}$, 
which implies that $(\eee_{k},\eee_{k})$ is (nearly) a deflating subspace for $(\hat{A},\hat{B})$.  
If the pencil does not quite split apart, another step with the same (or improved?) shifts may get 
the job done.  Notice that to achieve the desired spaces $\cue_{k} = r(AB^{-1})\eee_{k}$ and 
$\zee_{k} = r(B^{-1}A)\eee_{k}$, it is not necessary to pass the shifts all the way through the pencil.
All that is needed is that $\rho_{1}$, \ldots, $\rho_{m}$ are pushed downward past position $k+1$
and $\tau_{1}$, \ldots, $\tau_{m}$ are passed upward past position~$k$.   

Of course this is a very optimistic scenario.  (Where do we get these special shifts?)  
We include it here just to indicate what might be possible and to illustrate the use of 
Theorem~\ref{thm:generalg}.   

\subsection*{Practical shift strategies}
A more realistic plan is to take (for example) $\rho_{1}$, \ldots, $\rho_{m}$ 
to be the eigenvalues of the lower-right-hand $m\times m$ subpencil and $\tau_{1}$, \ldots, $\tau_{m}$ the 
eigenvalues of the upper-left-hand $m\times m$  subpencil, which will have the effect of causing deflations 
near the ends of the pencil.\footnote{Notice, however, that a strategy like this should also include some provision to ensure
that the upward-moving shifts are well separated from the downward-moving shifts.  
If some $\rho_{j}$ is (nearly) equal to one of the $\tau_{i}$, they will (nearly) cancel each other out.}  
An even better idea is to include \emph{aggressive early deflation} \cite{BrByMa01b}, which is easy to implement in this
context.   This was already discussed in detail in \cite{Cam19,CaMeVa19a}, so we will not go into it.

\subsection*{Steady streams of shifts}
We conclude this section with one more interesting but fanciful idea.  Imagine that we introduce 
steady streams of shifts at the top and the bottom.  Eventually the streams start to pass through
each other.  How do we move the streams in their respective directions in an expeditious way?  
To answer this question let us first look at the small case $n=8$, for which we have seven poles.
Suppose we have at some point the poles
\begin{displaymath}
\tau_{1},\ \rho_{3},\ \tau_{2},\ \rho_{2},\ \tau_{3},\ \rho_{1},\ \tau_{4},
\end{displaymath}
where the shifts $\rho_{i}$ are moving downward and the $\tau_{i}$ upward.  We can introduce a new
shift $\rho_{4}$ at the top by a move of type I that removes $\tau_{1}$.  At the same time we can do 
three moves of type II to interchange $\rho_{3}$ with $\tau_{2}$, $\rho_{2}$ with $\tau_{3}$, and $\rho_{1}$
with $\tau_{4}$.  The result is 
\begin{displaymath}
\rho_{4},\ \tau_{2},\ \rho_{3},\ \tau_{3},\ \rho_{2},\ \tau_{4},\ \rho_{1}.
\end{displaymath}
This is one step.  For the next step we use a move of type I to introduce a new shift $\tau_{5}$ at the bottom,
removing $\rho_{1}$.  At the same time we do three moves of type II to interchange  $\tau_{4}$ with $\rho_{2}$,   
$\tau_{3}$ with $\rho_{3}$, and $\tau_{2}$ with $\rho_{4}$.   The result is
\begin{displaymath}
\tau_{2},\ \rho_{4},\ \tau_{3},\ \rho_{3},\ \tau_{4},\ \rho_{2},\ \tau_{5}.
\end{displaymath}
The third step is like the first, the fourth step is like
the second, and so on.  We can illustrate these steps schematically with a diagram.  
\begin{displaymath}
\cdots\quad
\parbox{11.6cm}{
\begin{tikzpicture}[scale=1.66,y=-1cm]
\tikzrotation[black][I]{-.5}{0}
\tikzrotation[black][a]{-.5}{.4}
\foreach \j in {2,...,3}{\tikzrotation[black]{-.5}{2*\j/5}}
\foreach \j in {0,...,2}{\tikzrotation[red]{-.8}{2*\j/5+0.2}}
\tikzrotation[blue][I]{-1.1}{0}
\foreach \j in {1,...,3}{\tikzrotation[blue]{-1.1}{2*\j/5}}
\foreach \j in {0,...,2}{\tikzrotation[green]{-1.4}{2*\j/5+0.2}}
\foreach \j in {0,...,3}{\tikzrotation[gray]{-1.7}{2*\j/5}}
\foreach \j in {0,...,2}{\tikzrotation[gray]{-2.0}{2*\j/5+0.2}}
\foreach \j in {0,...,3}{\tikzrotation[gray]{-2.3}{2*\j/5}}
\foreach \j in {0,...,2}{\tikzrotation[gray]{-2.6}{2*\j/5+0.2}}
\draw (-.15,-.1) -- (-.2,-.1) -- (-.2,1.5) -- (-.15,1.5);
\draw (1.55,-.1) -- (1.6,-.1) -- (1.6,1.5) -- (1.55,1.5);
\foreach \j in {0,...,7}{
   \foreach \i in {\j,...,7}{\node at (\i/5,\j/5)
     [align=center,scale=1.0]{$\times$};}}
\foreach \j in {0,...,6}{\node at (\j/5,\j/5+.2)
     [align=center,scale=1.0]{$\times$};}
\tikzrotation[black][a]{1.8}{0.2}
\foreach \j in {1,...,2}{\tikzrotation[black]{1.8}{2*\j/5+0.2}}
\foreach \j in {0,...,2}{\tikzrotation[red]{2.1}{2*\j/5}}
\tikzrotation[red][I]{2.1}{1.2}
\foreach \j in {0,...,2}{\tikzrotation[blue]{2.4}{2*\j/5+0.2}}
\foreach \j in {0,...,2}{\tikzrotation[green]{2.7}{2*\j/5}}
\tikzrotation[green][I]{2.7}{1.2}
\foreach \j in {0,...,2}{\tikzrotation[gray]{3.0}{2*\j/5+0.2}}
\foreach \j in {0,...,3}{\tikzrotation[gray]{3.3}{2*\j/5}}
\foreach \j in {0,...,2}{\tikzrotation[gray]{3.6}{2*\j/5+0.2}}
\foreach \j in {0,...,3}{\tikzrotation[gray]{3.9}{2*\j/5}}
\end{tikzpicture}
}\cdots
\end{displaymath}
The matrix in the middle can be either $A$ or $B$, since the same core transformations
are applied to both.  The cores of the first step are in black, with the move of type I marked 
accordingly.  Each move of type II requires two cores, one on the left and one on the right.  
For example, the two cores marked with the symbol ``$a$'' belong to a single move.  
The second step is in red, with the core of type I marked accordingly on the right.  
The third and fourth steps are marked in blue and green, respectively.  Four subsequent steps are
shown in grey.  We are illustrating the case $n=8$, which is typical of even $n$.  
The odd case, which is slightly different, is left for the reader.  

Before we get too excited about this elegant scheme, we must acknowledge that there are
some challenges in the way of a competitive implementation.  
Thinking now of larger $n$, we see that each step is rich in arithmetic and highly parallel.
Each step consists of about $n/2$ moves or $O(n^{2})$ flops.  To move a shift from one 
end of the pencil to the other requires about $n$ steps, or $O(n^{3})$ flops.  Therefore 
any competitive implementation must exploit the parallelism well.  Another, possibly larger,
issue is this:  How do we get a steady stream of good shifts to feed in at the two ends?

\section{Connections to earlier work}
\label{sec:connections}

\subsection*{Bulge pencils}
The purpose of shifting is to accelerate convergence.  In the
standard Francis bulge-chasing algorithm the shifts are inserted at
the top.  That is, the shifts are used to help determine the initial
transformation that creates the bulge.  Then the shifts are forgotten,
and the bulge is chased downward until it disappears off the bottom.
Well-chosen shifts, inserted at the top, lead to rapid emergence of
eigenvalues at the bottom of the matrix or pencil. Thus the
information about the shifts is somehow transmitted in the bulge from
top to bottom.  

A bit more than twenty years ago one of the authors
began to study the mechanism by which the shift information is conveyed in bulge-chasing algorithms. 
This study took some time, it seemed to be nontrivial, and it led to the discovery of the \emph{bulge pencil} \cite{Wat95,Wat96,Wat07}.    

Now let's take a fresh look at the bulge pencil in light of what we now know about pole swapping.  
Suppose we pick a single shift $\rho$ and begin chasing a bulge downward in a pair $(A,B)$.  After couple of
steps we have
\begin{equation}\label{eq:bp:1}
\parbox{2.cm}{
\begin{tikzpicture}[scale=1.66,y=-1cm]
\draw (-.15,-.1) -- (-.2,-.1) -- (-.2,1.1) -- (-.15,1.1);
\draw (1.15,-.1) -- (1.2,-.1) -- (1.2,1.1) -- (1.15,1.1);
\foreach \j in {0,...,5}{
   \foreach \i in {\j,...,5}{\node at (\i/5,\j/5)
     [align=center,scale=1.0]{$\times$};}}
\foreach \j in {0,...,4}{\node at (\j/5,\j/5+.2)
     [align=center,scale=1.0]{$\times$};}
   \node[red] at (0.2,.6){$+$};
   \draw[densely dotted] (.1,.7) -- (.5,.7) -- (.5, 0.3) -- (.1,0.3)--cycle;
   %\draw[densely dotted] (.3,.9) -- (.7,.9) -- (.7, 0.5) -- (.3,0.5)--cycle;

%\phantom{\color{red}\node at (.2,.6){$+$};\color{black}}
\end{tikzpicture} 
} \qquad \qquad
\parbox{2.cm}{
\begin{tikzpicture}[scale=1.66,y=-1cm]
\draw (-.15,-.1) -- (-.2,-.1) -- (-.2,1.1) -- (-.15,1.1);
\draw (1.15,-.1) -- (1.2,-.1) -- (1.2,1.1) -- (1.15,1.1);
\foreach \j in {0,...,5}{
   \foreach \i in {\j,...,5}{\node at (\i/5,\j/5)
     [align=center,scale=1.0]{$\times$};}}
 \draw[densely dotted] (.1,.7) -- (.5,.7) -- (.5, 0.3) -- (.1,0.3)--cycle;
 % \node[red] at (0,.2){$+$};
%\node[red] at (.4,.6){$+$};
\end{tikzpicture} 
} \qquad,
\end{equation}
with the bulge located at position $(4,2)$. The  $2 \times 2$ subpencil outlined in (\ref{eq:bp:1}) 
is the bulge pencil.   Its eigenvalues 
are $\rho$ and $\infty$ \cite[Chap.~7]{Wat07}.\footnote{In \cite{Wat07} we considered (large-bulge) 
multishift algorithms with $k$ shifts $\rho_{1}$, \ldots, $\rho_{k}$.  Then the bulge pencil is $(k+1) \times (k+1)$
and has eigenvalues $\rho_{1}$, \ldots, $\rho_{k}$, and $\infty$.
Here we are considering only the case $k=1$.}   If we now do one more transformation on the left, 
moving the bulge from $A$ to $B$,  we obtain 
\begin{equation*}%\label{eq:bp:1}
\parbox{2.cm}{
\begin{tikzpicture}[scale=1.66,y=-1cm]
\draw (-.15,-.1) -- (-.2,-.1) -- (-.2,1.1) -- (-.15,1.1);
\draw (1.15,-.1) -- (1.2,-.1) -- (1.2,1.1) -- (1.15,1.1);
\foreach \j in {0,...,5}{
   \foreach \i in {\j,...,5}{\node at (\i/5,\j/5)
     [align=center,scale=1.0]{$\times$};}}
\foreach \j in {0,...,4}{\node at (\j/5,\j/5+.2)
     [align=center,scale=1.0]{$\times$};}
%   \node[red] at (0.2,.6){$+$};
   \draw[densely dotted] (.1,.7) -- (.5,.7) -- (.5, 0.3) -- (.1,0.3)--cycle;
   %\draw[densely dotted] (.3,.9) -- (.7,.9) -- (.7, 0.5) -- (.3,0.5)--cycle;

%\phantom{\color{red}\node at (.2,.6){$+$};\color{black}}
\end{tikzpicture} 
} \qquad \qquad
\parbox{2.cm}{
\begin{tikzpicture}[scale=1.66,y=-1cm]
\draw (-.15,-.1) -- (-.2,-.1) -- (-.2,1.1) -- (-.15,1.1);
\draw (1.15,-.1) -- (1.2,-.1) -- (1.2,1.1) -- (1.15,1.1);
\foreach \j in {0,...,5}{
   \foreach \i in {\j,...,5}{\node at (\i/5,\j/5)
     [align=center,scale=1.0]{$\times$};}}
 \draw[densely dotted] (.1,.7) -- (.5,.7) -- (.5, 0.3) -- (.1,0.3)--cycle;
 % \node[red] at (0,.2){$+$};
\node[red] at (.4,.6){$+$};
\end{tikzpicture} 
} \qquad.
\end{equation*}
This is a Hessenberg pair, and the eigenvalues of the bulge pencil are now in plain sight.
In the $(3,2)$ position we have the pole $\infty$, and in the $(4,3)$ position we have 
a finite pole, which we know to be the shift $\rho$.  What was opaque before is now transparent.  

Certain structured problems require algorithms that chase bulges in both directions in 
order to preserve the structure.  The first example of such an algorithm was the 
Hamiltonian QR algorithm of Byers \cite{Bye83,Bye86}.  Some more recent examples
are algorithms for 
the palindromic and even eigenvalue problems discussed in \cite{KrScWa08}.
Our understanding of the bulge pencil made it possible to explain completely how
to pass bulges (and the shifts that they contain) through each other in general in both structured and unstructured
cases \cite{Wat98}.  It took time and effort figure this out, but now, in light of what 
we know about pole swapping, we can see that passing shifts through each other
is simple.  It's just a matter of swapping two eigenvalues of the pole pencil.
Once again, what was opaque before is now transparent.  

\subsection*{Tightly and optimally packed shifts}
The schemes discussed in Section~\ref{sec:variations} insert not just one shift but long chains of shifts 
$\rho_{1}$, \ldots, $\rho_{m}$ into the pencil as poles and then chase them 
downward (or upward) in a bunch.  In such a scheme it is important for efficiency 
to have the shifts  packed as tightly together as possible.  It is clear that in our current 
scenario we achieve this; the shifts $\rho_{1}$, \ldots, $\rho_{m}$ appear as adjacent poles 
in the Hessenberg pair, and there is no way they could be packed any closer.  
(The same result is achieved effortlessly when this methodology
is applied to core-chasing algorithms \cite{AuMaRoVaWa18}.)  In contrast, in the
bulge-chasing scenario, the packing of bulges is not naturally optimal, and it is not
obvious how to fix the problem. However, with some effort a remedy was eventually found
\cite{KaKrLa14}.  In hindsight we can show that the remedy is a disguised implementation of a 
pole-swapping algorithm. 

We have explained already that pole swapping reduces to bulge
chasing if all poles that are not shifts are set to infinity. 
The philosophy is, however, different. 
%Consider a pair $(A,B)$. 
%Bulge chasing
%stores the shifts in the $A$ matrix as bulges; pole swapping stores
%them on the subdiagonal of $B$ as poles. 
Bulge chasing executes in each
step an equivalence where the transforms on left and right act on columns and rows
having the same indices, say $i$ and $i+1$.
Pole swapping, on the other hand, has the transformation on the left acting on rows $i$ and $i+1$, 
while the transformation on the right acts on columns $i-1$ and $i$.
Pole swapping is half an equivalence off compared to bulge chasing. This lag is
natural in the pole-swapping setting and appears to be the foundational strategy to get
optimally packed bulges.

An optimally packed chain of two single shifts in the bulge chasing setting would, ideally, look
like
%\footnote{We consider only generalized eigenvalue problems.
%  The analysis still holds true for a standard eigenvalue problem by setting
%  $B=I$.
%In the pole swapping setting, this implies that
% matrix $B$ will always be unitary Hessenberg, composed of some rotations.
%}
%\eqref{eq:optpack:1}, 
\begin{equation}
  \label{eq:optpack:1}
  \parbox{2.cm}
  {
\begin{tikzpicture}[scale=1.66,y=-1cm]
\draw (-.15,-.1) -- (-.2,-.1) -- (-.2,1.1) -- (-.15,1.1);
\draw (1.15,-.1) -- (1.2,-.1) -- (1.2,1.1) -- (1.15,1.1);
\foreach \j in {0,...,5}{
   \foreach \i in {\j,...,5}{\node at (\i/5,\j/5)
     [align=center,scale=1.0]{$\times$};}}
\foreach \j in {0,...,4}{\node at (\j/5,\j/5+.2)
     [align=center,scale=1.0]{$\times$};}
{\color{red}\node at (0,.4){$+$};\color{black}}
{\color{red}\node at (.2,.6){$+$};\color{black}}
\end{tikzpicture} 
} \qquad \qquad
\parbox{2.cm}
{
\begin{tikzpicture}[scale=1.66,y=-1cm]
\draw (-.15,-.1) -- (-.2,-.1) -- (-.2,1.1) -- (-.15,1.1);
\draw (1.15,-.1) -- (1.2,-.1) -- (1.2,1.1) -- (1.15,1.1);
\foreach \j in {0,...,5}{
   \foreach \i in {\j,...,5}{\node at (\i/5,\j/5)
     [align=center,scale=1.0]{$\times$};}}
%\color{red}\node at (0,.2){$+$};\color{black}
\phantom{\color{red}\node at (.2,.6){$+$};\color{black}}
\end{tikzpicture} 
} \qquad ,
\end{equation}
whereas in the pole-swapping setting it would resemble
\begin{equation*}\label{eq:optpack:2}
\parbox{2.cm}{
\begin{tikzpicture}[scale=1.66,y=-1cm]
\draw (-.15,-.1) -- (-.2,-.1) -- (-.2,1.1) -- (-.15,1.1);
\draw (1.15,-.1) -- (1.2,-.1) -- (1.2,1.1) -- (1.15,1.1);
\foreach \j in {0,...,5}{
   \foreach \i in {\j,...,5}{\node at (\i/5,\j/5)
     [align=center,scale=1.0]{$\times$};}}
\foreach \j in {0,...,4}{\node at (\j/5,\j/5+.2)
     [align=center,scale=1.0]{$\times$};}
\phantom{\color{red}\node at (0,.4){$+$};\color{black}}
\phantom{\color{red}\node at (.2,.6){$+$};\color{black}}
\end{tikzpicture} 
} \qquad \qquad
\parbox{2.cm}{
\begin{tikzpicture}[scale=1.66,y=-1cm]
\draw (-.15,-.1) -- (-.2,-.1) -- (-.2,1.1) -- (-.15,1.1);
\draw (1.15,-.1) -- (1.2,-.1) -- (1.2,1.1) -- (1.15,1.1);
\foreach \j in {0,...,5}{
   \foreach \i in {\j,...,5}{\node at (\i/5,\j/5)
     [align=center,scale=1.0]{$\times$};}}
\color{red}\node at (0,.2){$+$};\color{black}
{\color{red}\node at (.2,.4){$+$};\color{black}}
\end{tikzpicture} 
} \qquad.
\end{equation*}
For simplicity, and without loss of generality,
we restrict ourselves to two single shifts.

We have seen that getting an optimally packed chain of shifts in the
pole-swapping setting is trivial.
In the bulge chasing case, however, it is impossible to achieve \eqref{eq:optpack:1}. Introducing the first shift and chasing it down a
row results in 
\begin{equation*}
  %\label{eq:optpack:1}
  \parbox{2.cm}
  {
\begin{tikzpicture}[scale=1.66,y=-1cm]
\draw (-.15,-.1) -- (-.2,-.1) -- (-.2,1.1) -- (-.15,1.1);
\draw (1.15,-.1) -- (1.2,-.1) -- (1.2,1.1) -- (1.15,1.1);
\foreach \j in {0,...,5}{
   \foreach \i in {\j,...,5}{\node at (\i/5,\j/5)
     [align=center,scale=1.0]{$\times$};}}
\foreach \j in {0,...,4}{\node at (\j/5,\j/5+.2)
     [align=center,scale=1.0]{$\times$};}
%{\color{red}\node at (0,.4){$+$};\color{black}}
{\color{red}\node at (.2,.6){$+$};\color{black}}
\end{tikzpicture} 
} \qquad \qquad
\parbox{2.cm}
{
\begin{tikzpicture}[scale=1.66,y=-1cm]
\draw (-.15,-.1) -- (-.2,-.1) -- (-.2,1.1) -- (-.15,1.1);
\draw (1.15,-.1) -- (1.2,-.1) -- (1.2,1.1) -- (1.15,1.1);
\foreach \j in {0,...,5}{
   \foreach \i in {\j,...,5}{\node at (\i/5,\j/5)
     [align=center,scale=1.0]{$\times$};}}
%\color{red}\node at (0,.2){$+$};\color{black}
\phantom{\color{red}\node at (.2,.6){$+$};\color{black}}
\end{tikzpicture} 
} \qquad .
\end{equation*}
% chasing it down one position
Introducing the second shift does not work. We end up with
\begin{equation*}
  %\label{eq:optpack:1}
  \parbox{2.cm}
  {
\begin{tikzpicture}[scale=1.66,y=-1cm]
\draw (-.15,-.1) -- (-.2,-.1) -- (-.2,1.1) -- (-.15,1.1);
\draw (1.15,-.1) -- (1.2,-.1) -- (1.2,1.1) -- (1.15,1.1);
\foreach \j in {0,...,5}{
   \foreach \i in {\j,...,5}{\node at (\i/5,\j/5)
     [align=center,scale=1.0]{$\times$};}}
\foreach \j in {0,...,4}{\node at (\j/5,\j/5+.2)
     [align=center,scale=1.0]{$\times$};}
   \node[red] at (0,.4){$+$};
   \node[red] at (0,.6){$+$};
   \node[red] at (.2,.6){$+$};
\end{tikzpicture} 
} \qquad \qquad
\parbox{2.cm}
{
\begin{tikzpicture}[scale=1.66,y=-1cm]
\draw (-.15,-.1) -- (-.2,-.1) -- (-.2,1.1) -- (-.15,1.1);
\draw (1.15,-.1) -- (1.2,-.1) -- (1.2,1.1) -- (1.15,1.1);
\foreach \j in {0,...,5}{
   \foreach \i in {\j,...,5}{\node at (\i/5,\j/5)
     [align=center,scale=1.0]{$\times$};}}
%\color{red}\node at (0,.2){$+$};\color{black}
\phantom{\color{red}\node at (.2,.6){$+$};\color{black}}
\end{tikzpicture}
} \qquad ,
\end{equation*}
and  both single shifts have been combined into a $2\times 2$ multishift bulge.
The scheme introduced by Braman, Byers, and Mathias \cite{p522} delays the introduction 
of the second shift until the first has been moved two spots down. We get
\begin{equation*}
  %\label{eq:optpack:1}
  \parbox{2.cm}
  {
\begin{tikzpicture}[scale=1.66,y=-1cm]
\draw (-.15,-.1) -- (-.2,-.1) -- (-.2,1.1) -- (-.15,1.1);
\draw (1.15,-.1) -- (1.2,-.1) -- (1.2,1.1) -- (1.15,1.1);
\draw[densely dotted] (-.1,.5) -- (.3,.5) -- (.3, 0.1) -- (-.1,0.1)--cycle;
\draw[densely dotted] (.3,.9) -- (.7,.9) -- (.7, 0.5) -- (.3,0.5)--cycle;
\foreach \j in {0,...,5}{
   \foreach \i in {\j,...,5}{\node at (\i/5,\j/5)
     [align=center,scale=1.0]{$\times$};}}
\foreach \j in {0,...,4}{\node at (\j/5,\j/5+.2)
     [align=center,scale=1.0]{$\times$};}
   \node[red] at (0,.4){$+$};
   %\node[red] at (0,.6){$+$};
   \node[red] at (.4,.8){$+$};
\end{tikzpicture} 
} \qquad \qquad
\parbox{2.cm}
{
\begin{tikzpicture}[scale=1.66,y=-1cm]
\draw (-.15,-.1) -- (-.2,-.1) -- (-.2,1.1) -- (-.15,1.1);
\draw (1.15,-.1) -- (1.2,-.1) -- (1.2,1.1) -- (1.15,1.1);
\foreach \j in {0,...,5}{
   \foreach \i in {\j,...,5}{\node at (\i/5,\j/5)
     [align=center,scale=1.0]{$\times$};}}
%\color{red}\node at (0,.2){$+$};\color{black}
\phantom{\color{red}\node at (.2,.6){$+$};\color{black}}
\end{tikzpicture} 
} \qquad, 
\end{equation*}
which are so-called tightly packed shifts. It is impossible to pack
them any closer; otherwise the two $2\times 2$ bulge pencils
(marked in the figure) would overlap.

%Getting \eqref{eq:optpack:1} is unfeasible.
A solution to pack the bulges as tight as in (\ref{eq:optpack:1}) was proposed by
Karlsson, Kressner, and Lang \cite{KaKrLa14}. The trick is to defer some transformations from the right. Suppose the first bulge is introduced and we would like to move it down a row; instead of
executing an entire bulge-chasing step, we only execute the transformation from the left,
the transformation on the right is postponed. We end up with
\begin{equation*}%\label{eq:optpack:2}
\parbox{2.cm}{
\begin{tikzpicture}[scale=1.66,y=-1cm]
\draw (-.15,-.1) -- (-.2,-.1) -- (-.2,1.1) -- (-.15,1.1);
\draw (1.15,-.1) -- (1.2,-.1) -- (1.2,1.1) -- (1.15,1.1);
\foreach \j in {0,...,5}{
   \foreach \i in {\j,...,5}{\node at (\i/5,\j/5)
     [align=center,scale=1.0]{$\times$};}}
\foreach \j in {0,...,4}{\node at (\j/5,\j/5+.2)
     [align=center,scale=1.0]{$\times$};}
%\phantom{\color{red}\node at (0,.4){$+$};\color{black}}
\phantom{\color{red}\node at (.2,.6){$+$};\color{black}}
\end{tikzpicture} 
} \qquad \qquad
\parbox{2.cm}{
\begin{tikzpicture}[scale=1.66,y=-1cm]
\draw (-.15,-.1) -- (-.2,-.1) -- (-.2,1.1) -- (-.15,1.1);
\draw (1.15,-.1) -- (1.2,-.1) -- (1.2,1.1) -- (1.15,1.1);
\foreach \j in {0,...,5}{
   \foreach \i in {\j,...,5}{\node at (\i/5,\j/5)
     [align=center,scale=1.0]{$\times$};}}
%\node[red] at (0,.2){$+$};
\node[red] at (.2,.4){$+$};
\end{tikzpicture} 
} \qquad, 
\end{equation*}
which is nothing else than having moved the first pole down a position. Next we introduce
the second shift, but we do not execute the transformation from the right. We get
\begin{equation*}%\label{eq:optpack:2}
\parbox{2.cm}{
\begin{tikzpicture}[scale=1.66,y=-1cm]
\draw (-.15,-.1) -- (-.2,-.1) -- (-.2,1.1) -- (-.15,1.1);
\draw (1.15,-.1) -- (1.2,-.1) -- (1.2,1.1) -- (1.15,1.1);
\foreach \j in {0,...,5}{
   \foreach \i in {\j,...,5}{\node at (\i/5,\j/5)
     [align=center,scale=1.0]{$\times$};}}
\foreach \j in {0,...,4}{\node at (\j/5,\j/5+.2)
     [align=center,scale=1.0]{$\times$};}
%\phantom{\color{red}\node at (0,.4){$+$};\color{black}}
\phantom{\color{red}\node at (.2,.6){$+$};\color{black}}
\end{tikzpicture} 
} \qquad \qquad
\parbox{2.cm}{
\begin{tikzpicture}[scale=1.66,y=-1cm]
\draw (-.15,-.1) -- (-.2,-.1) -- (-.2,1.1) -- (-.15,1.1);
\draw (1.15,-.1) -- (1.2,-.1) -- (1.2,1.1) -- (1.15,1.1);
\foreach \j in {0,...,5}{
   \foreach \i in {\j,...,5}{\node at (\i/5,\j/5)
     [align=center,scale=1.0]{$\times$};}}
\node[red] at (0,.2){$+$};
\node[red] at (.2,.4){$+$};
\end{tikzpicture} 
} \qquad.
\end{equation*}
To start the chasing, one now brings the first shift to the right, creating a bulge and then annihlates the bulge. Thus, one does not execute an entire bulge-chase step, but again the transformation from the right is delayed. We end up with
\begin{equation*}%\label{eq:optpack:2}
\parbox{2.cm}{
\begin{tikzpicture}[scale=1.66,y=-1cm]
\draw (-.15,-.1) -- (-.2,-.1) -- (-.2,1.1) -- (-.15,1.1);
\draw (1.15,-.1) -- (1.2,-.1) -- (1.2,1.1) -- (1.15,1.1);
\foreach \j in {0,...,5}{
   \foreach \i in {\j,...,5}{\node at (\i/5,\j/5)
     [align=center,scale=1.0]{$\times$};}}
\foreach \j in {0,...,4}{\node at (\j/5,\j/5+.2)
     [align=center,scale=1.0]{$\times$};}
%\phantom{\color{red}\node at (0,.4){$+$};\color{black}}
\phantom{\color{red}\node at (.2,.6){$+$};\color{black}}
\end{tikzpicture} 
} \qquad \qquad
\parbox{2.cm}{
\begin{tikzpicture}[scale=1.66,y=-1cm]
\draw (-.15,-.1) -- (-.2,-.1) -- (-.2,1.1) -- (-.15,1.1);
\draw (1.15,-.1) -- (1.2,-.1) -- (1.2,1.1) -- (1.15,1.1);
\foreach \j in {0,...,5}{
   \foreach \i in {\j,...,5}{\node at (\i/5,\j/5)
     [align=center,scale=1.0]{$\times$};}}
\node[red] at (0,.2){$+$};
\node[red] at (.4,.6){$+$};
\end{tikzpicture} 
} \qquad,
\end{equation*}
after which we can do the same with the second shift.
Obviously this is just pole swapping, but the description in terms of bulges and
delayed transformations conceals this fact.

Karlsson et al.\ \cite{KaKrLa14} discussed the optimal packing of the bulges in terms of 
double-shift bulges.
%which is necessary to stick to real arithmetic for real eigenvalue
%problems.
Since we have not discussed double-shift pole-swapping algorithms here, we do not
explore this. The principles are, however, identical. 
The algorithm of Karlsson et al.\ \cite{KaKrLa14} is a pole-swapping algorithm (with poles at infinity) avant-la-lettre.

\section{The new pole-swapping procedure}\label{sec:newswap}

We now describe the new swapping procedure that was promised at the beginning. 
The process of swapping two adjacent poles is equivalent to swapping two adjacent eigenvalues in the 
upper-triangular pole pencil.  For the description it suffices to look at a $2 \times 2$ subpencil.   
Consider therefore a $2 \times 2$ upper-triangular pencil  
\begin{equation}\label{eq:startsubpencil}
A - \lambda B = \left[\begin{array}{cc} \alpha_{1} & a  \\ 0 & \alpha_{2}\end{array}\right] 
- \lambda \left[\begin{array}{cc} \beta_{1} & b  \\ 0 & \beta_{2}  \end{array}\right]
\end{equation}
with eigenvalues $\sigma_{1} = \alpha_{1}/\beta_{1}$ and $\sigma_{2} = \alpha_{2}/\beta_{2}$.  We want to swap the eigenvalues.  That is, we want to find core transformations $Q$ and $Z$ such that
\begin{displaymath}
Q^{*}(A - \lambda B)Z = \hat{A} - \lambda\hat{B} = \left[\begin{array}{cc} \hat{\alpha}_{1} & \hat{a} 
\\ & \hat{\alpha}_{2}\end{array}\right] - \lambda \left[\begin{array}{cc} \hat{\beta}_{1} & \hat{b} \\ 
& \hat{\beta}_{2}\end{array}\right], 
\end{displaymath}
with $\hat{\alpha}_{1}/\hat{\beta}_{1} = \sigma_{2}$ and $\hat{\alpha}_{2}/\hat{\beta}_{2} = \sigma_{1}$.   

\subsection*{Solution in exact arithmetic}

\subsubsection*{Exact method 1}  This method ``grabs $\sigma_{2}$'' and pulls it upward.
Substituting $\lambda = \alpha_{2}/\beta_{2}$ in the pencil, we have 
\begin{displaymath}
\beta_{2}A - \alpha_{2}B = \left[\begin{array}{cc} 
\beta_{2}\alpha_{1} - \alpha_{2}\beta_{1} & \beta_{2}a - \alpha_{2}b \\ 0 & 0
\end{array}\right],
\end{displaymath}
from which we deduce that the vector
\begin{equation}\label{eq:xdef}
x = \left[\begin{array}{c} \alpha_{2}b - \beta_{2}a \\
\beta_{2}\alpha_{1} - \alpha_{2}\beta_{1}
\end{array}\right]
\end{equation}
is a right eigenvector of the pencil associated with eigenvalue $\sigma_{2} = \alpha_{2}/\beta_{2}$.
Let 
\begin{equation}\label{eq:ydef}
y = \left[\begin{array}{c} \alpha_{1}b - \beta_{1}a \\
\beta_{2}\alpha_{1} - \alpha_{2}\beta_{1}
\end{array}\right].
\end{equation}
Direct computation shows that 
\begin{equation}\label{eq:xydeflate}
Ax = \alpha_{2}y \quad\mbox{and}\quad Bx = \beta_{2}y.
\end{equation}
Thus the spaces spanned by $x$ and $y$ form a one-dimensional deflating pair for $(A,B)$ associated with the
eigenvalue $\alpha_{2}/\beta_{2}$.

Let $Q$ and $Z$ be cores such that 
\begin{displaymath}
Z^{*}x = \gamma e_{1} \quad\mbox{and}\quad Q^{*}y = \zeta e_{1}, 
\end{displaymath}
and define 
\begin{displaymath}
\hat{A} - \lambda \hat{B} = Q^{*}AZ - \lambda\, Q^{*}BZ.
\end{displaymath}
Then we claim that $\hat{A} - \lambda \hat{B}$ is an upper triangular pencil with the eigenvalue $\alpha_{2}/\beta_{2}$ on top.
This is verified by the calculations
\begin{displaymath}
\hat{A}e_{1} = Q^{*}AZe_{1} = \gamma^{-1}Q^{*}Ax = \alpha_{2}\gamma^{-1}Q^{*}y = \alpha_{2}\gamma^{-1}\zeta\, e_{1}
\end{displaymath}
and
\begin{displaymath}
\hat{B}e_{1} = Q^{*}BZe_{1} = \gamma^{-1}Q^{*}Bx = \beta_{2}\gamma^{-1}Q^{*}y = \beta_{2}\gamma^{-1}\zeta\, e_{1}.
\end{displaymath}
This procedure fails if and only if $x=0$, which happens whenever $A = B$, for example.   The condition $x=0$ implies
that the eigenvalues are equal, so in this case the swap can be skipped.

\subsubsection*{Exact method 2}  This method, which is the dual of the previous method, 
``grabs $\sigma_{1}$'' and pushes it downward.  
Substituting $\lambda = \alpha_{1}/\beta_{1}$ in the pencil, we have 
\begin{displaymath}
\beta_{1}A - \alpha_{1}B = \left[\begin{array}{cc} 
0 & \beta_{1}a - \alpha_{1}b \\ 0 & \beta_{1}\alpha_{2} - \alpha_{1} \beta_{2}
\end{array}\right],
\end{displaymath}
from which we deduce that the vector
\begin{equation}\label{eq:vdef}
v^{T} = \left[\begin{array}{cc} \beta_{1}\alpha_{2} - \alpha_{1}\beta_{2} & \alpha_{1}b - \beta_{1} a 
\end{array}\right]
\end{equation}
is a left eigenvector of the pencil associated with eigenvalue $\sigma_{1} = \alpha_{1}/\beta_{1}$.
Let 
\begin{equation}\label{eq:wdef}
w^{T} = \left[\begin{array}{cc}  \beta_{1}\alpha_{2} - \alpha_{1}\beta_{2} & \alpha_{2}b - \beta_{2}a
\end{array}\right].
\end{equation}
Direct computation shows that 
\begin{equation}\label{eq:vwdeflate}
v^{T}A = \alpha_{1}w^{T} \quad\mbox{and}\quad v^{T}B = \beta_{1}w^{T}.
\end{equation}

Let $Q$ and $Z$ be cores such that 
\begin{displaymath}
v^{T}Q = \zeta e_{2}^{T} \quad\mbox{and}\quad w^{T}Z = \gamma e_{2}^{T},
\end{displaymath}
and define 
\begin{displaymath}
\hat{A} - \lambda \hat{B} = Q^{*}AZ - \lambda\, Q^{*}BZ.
\end{displaymath}
Then we claim that $\hat{A} - \lambda \hat{B}$ is an upper triangular pencil with the eigenvalue $\alpha_{1}/\beta_{1}$ on 
the bottom.
This is verified by the calculations
\begin{displaymath}
e_{2}^{T}\hat{A} = e_{2}^{T}Q^{*}AZ = \zeta^{-1}v^{T}AZ = \alpha_{1}\zeta^{-1}w^{T}Z = \alpha_{1}\zeta^{-1}\gamma\, e_{2}^{T}
\end{displaymath}
and
\begin{displaymath}
e_{2}^{T}\hat{B} = e_{2}^{T}Q^{*}BZ = \zeta^{-1}v^{T}BZ = \beta_{1}\zeta^{-1}w^{T}Z = \beta_{1}\zeta^{-1}\gamma\, e_{2}^{T}.
\end{displaymath}
This procedure fails if and only if $v^{T}=0$, in which case $\sigma_{1} = \sigma_{2}$ and the swap can be skipped.

The reader can  easily check that the two methods produce exactly the same $Q$ and $Z$.

\subsection*{Solution in floating point arithmetic}

In the interest of stability one should not implement either of the above procedures in practice.  
There are several alternatives.  

\subsubsection*{Case 1} 
We will demonstrate below that the following procedure, which is based on exact method 1, 
is stable in the case $\absval{\sigma_{1}} \geq \absval{\sigma_{2}}$. 
Compute $x$ as in (\ref{eq:xdef}).  Then compute $Z$ such that $Z^{*}x = \gamma e_{1}$, where 
$\gamma = \norm{x}$.  (Here and in what follows, the norm symbol refers to either the vector 
2-norm or matrix 2-norm, depending on the context.)
Then compute $BZ$.   Since $Ze_{1} = \gamma^{-1}x$, the first column of $BZ$ is 
$\gamma^{-1}\beta_{2}\,y$.  Do not compute $Q$  using the vector $y$ as defined in (\ref{eq:ydef}).  
Instead compute $Q$ so that $Q^{*}(BZe_{1}) = \beta_{2}\gamma^{-1}\zeta \, e_{1}$.
Then let 
\begin{displaymath}
\hat{A} = Q^{*}AZ \quad\mbox{and}\quad \hat{B} = Q^{*}BZ.
\end{displaymath} 

\subsubsection*{Case 2}
For the case when $\absval{\sigma_{1}} < \absval{\sigma_{2}}$ we need a different procedure.
There are multiple possibilities, the simplest of which is to apply the above procedure with the roles
of $A$ and $B$ reversed.  We compute $Z$ as before, then use $AZ$ instead of $BZ$ to determine $Q$.
Specifically, since $Ze_{1} = \gamma^{-1}x$, the first column of $AZ$ is 
$\gamma^{-1}\alpha_{2}\,y$.  Thus we can compute $Q$ so that $Q^{*}(AZe_{1}) = \alpha_{2}\gamma^{-1}\zeta \, e_{1}$.

This procedure is similar to that of Van Dooren \cite{VanD81}.   His method always computes $Z$ first, then uses
either $AZ$ or $BZ$ to compute $Q$.  The only difference is that our criterion for switching between
$BZ$ and $AZ$ is different from that in \cite{VanD81}.  This makes a difference in the backward error. 

Another procedure, which is based on exact method 2, computes $Q$ first.  Compute the vector $v^{T}$ as in 
(\ref{eq:vdef}), then compute $Q$ such that $v^{T}Q = \zeta e_{2}^{T}$, where $\zeta = \norm{v}$.  
Then compute $Q^{*}B$.   Since $e_{2}^{T}Q^{*} = \zeta^{-1} v^{T}$, the second row of $Q^{*}B$ is 
$\zeta^{-1}\beta_{1}\,w^{T}$.  Do not compute $Z$ using $w^{T}$ as defined in (\ref{eq:wdef}).  Instead compute $Z$ 
so that $(e_{2}^{T}Q^{*}B)Z = \beta_{1}\zeta^{-1}\gamma\, e_{2}^{T}$.  Then let 
\begin{displaymath}
\hat{A} = Q^{*}AZ \quad\mbox{and}\quad \hat{B} = Q^{*}BZ.
\end{displaymath} 

This is exactly equivalent to the procedure from Case 1 applied to a ``flipped'' pencil.  Let 
\smash{$F = \left[\begin{smallmatrix} 0 & 1 \\ 1 & 0\end{smallmatrix}\right]$}, the \emph{flip} matrix, and consider the 
pencil 
\begin{displaymath}
FA^{T}F - \lambda FB^{T}F = \left[\begin{array}{cc} \alpha_{2} & a \\ & \alpha_{1}\end{array}\right] -
\lambda \left[\begin{array}{cc} \beta_{2} & b \\ & \beta_{1} \end{array}\right].
\end{displaymath}
This has the eigenvalues reversed.  The condition $\absval{\sigma_{2}} > \absval{\sigma_{1}}$ implies that we can
stably apply the method from Case 1, and then ``unflip'' the result.  The equation 
$\hat{A} - \lambda \hat{B} = Q^{*}(A - \lambda B)Z$ implies
\begin{displaymath}
F\hat{A}^{T}F - \lambda F\hat{B}^{T}F = (FZ^{T}F)(FA^{T}F - \lambda FB^{T}F)(F\overline{Q}F), 
\end{displaymath}
which shows that the roles of $Q$ and $Z$ are reversed in the flipped procedure.  (Of course $Q$ and 
$F \overline{Q} F$ are not exactly the same, but they contain the same information.)  The ``compute $Q$ first'' 
procedure that we have just outlined is a way of implementing the ``flipped'' procedure without actually 
doing the flips.  

\subsection*{Backward error analysis}
It suffices to prove backward stability in Case 1, since the options in Case 2 are both variants of Case 1.

The swapping operation is a unitary equivalence, and such transformations generally are stable \cite{Hig02}, 
but there is one thing we have to check.   The core $Q$ is designed so that $Q^{*}(BZ)$ has a zero in the $(2,1)$
position.  This automatically creates a zero in the $(2,1)$ position of $Q^{*}(AZ)$ because the first columns of 
$AZ$ and $BZ$ are both proportional to $y$.  This is true in exact arithmetic.  We just need to check that in floating-point 
arithmetic the entry that is created in the $(2,1)$ position of $Q^{*}AZ$ is small enough that backward stability is
not compromised by setting it to zero.  For this it suffices that its magnitude be no bigger than a modest multiple 
of $u\norm{A}$, where $u$ is the unit roundoff.  
%Here and throughout the analysis, the norm symbol will denote 
%the 2-norm. 

The swapping operation begins with the computation of $x$ in (\ref{eq:xdef}).  In floating-point arithmetic we get 
\begin{equation}\label{eq:flxdef}
\fl{x} = \left[\begin{array}{c} \alpha_{2}b(1 + \epsilon_{1}) - \beta_{2}a(1 + \epsilon_{2}) \\
\beta_{2}\alpha_{1}(1 + \epsilon_{3}) - \alpha_{2}\beta_{1}(1 + \epsilon_{4})
\end{array}\right],
\end{equation}
where each $\epsilon_{i}$ is the result of two roundoff errors,  a multiplication and
a subtraction mapped back to the product terms, and therefore satisfies 
$\absval{\epsilon_{i}} \leq 2u + O(u^{2})$.  We will use the abbreviation $\absval{\epsilon_{i}} \lesssim u$ to mean that 
$\absval{\epsilon_{i}}$ is no bigger than a modest constant times $u$.  

The next step is to compute $Z$.  In practice we do this using $\fl{x}$ and make additional roundoff errors in
the computation.  We get $\tilde{Z} = \fl{Z}$ satisfying 
\begin{equation}\label{eq:xtildedef}
\tilde{Z}e_{1} = \tilde{x} = \tilde{\gamma}^{-1}\left[\begin{array}{c} \fl{x_{1}}(1 + \epsilon_{5}) \\
\fl{x_{2}}(1 + \epsilon_{6})\end{array}\right].
\end{equation}
Here $\tilde{\gamma} = \norm{\fl{x}}$.  A tiny relative error is made during this norm computation, and another tiny error 
is made when $\fl{x_{1}}$ is divided by $\tilde{\gamma}$.  These are the causes of the error $\epsilon_{5}$, and we have 
$\absval{\epsilon_{5}} \lesssim u$.  Similarly $\absval{\epsilon_{6}} \lesssim u$.  For more details about this 
computation see \cite[\S~1.4]{AuMaRoVaWa18}.

The vector $\tilde{x}$ defined by (\ref{eq:xtildedef}) is our computed (and normalized) version of a right eigenvector
associated with eigenvalue $\sigma_{2}$.  For later use we wish to show that $\tilde{x}$ is exactly an 
eigenvector of a slightly perturbed pencil.  Thus we seek $\tilde{\alpha}_{1}$, $\tilde{\alpha}_{2}$, $\tilde{\beta}_{1}$,
and $\tilde{\beta}_{2}$ such that
\begin{equation}\label{eq:perteigvec}
\left( \tilde{\beta}_{2}\left[\begin{array}{cc} \tilde{\alpha}_{1} & a \\ & \tilde{\alpha}_{2} \end{array}\right] - 
\tilde{\alpha}_{2}\left[\begin{array}{cc} \tilde{\beta}_{1} & b \\ & \tilde{\beta}_{2} \end{array}\right] \right)
\left[\begin{array}{c} \tilde{x}_{1} \\ \tilde{x}_{2}\end{array}\right] = 
\left[\begin{array}{c} 0 \\ 0 \end{array}\right].
\end{equation}
Notice that we are not going to back any of the error onto $a$ or $b$.   This equation is equivalent to 
\begin{displaymath}
(\tilde{\beta}_{2}\tilde{\alpha}_{1} - \tilde{\alpha}_{2}\tilde{\beta}_{1})\tilde{x}_{1} +
(\tilde{\beta}_{2}a - \tilde{\alpha}_{2}b)\tilde{x}_{2} = 0.
\end{displaymath} 
Filling in the values of $\tilde{x}_{1}$ and $\tilde{x}_{2}$ from (\ref{eq:xtildedef}) and (\ref{eq:flxdef}), we 
can check that this equation holds if we make the assignments
\begin{displaymath}
\tilde{\alpha}_{1} = \alpha_{1}\frac{(1+\epsilon_{3})(1 + \epsilon_{6})}{(1+ \epsilon_{2})(1 + \epsilon_{5})}, \qquad
\tilde{\alpha}_{2} = \alpha_{2}(1 + \epsilon_{1})(1 + \epsilon_{5}),
\end{displaymath}
\begin{displaymath}
\tilde{\beta}_{1} = \beta_{1}\frac{(1 + \epsilon_{4})(1 + \epsilon_{6})}{(1 + \epsilon_{1})(1 + \epsilon_{5})}, \qquad 
\tilde{\beta}_{2} = \beta_{2}(1 + \epsilon_{2})(1 + \epsilon_{5}).
\end{displaymath}
Clearly $\absval{\tilde{\alpha}_{i} - \alpha_{i}} \lesssim u \absval{\alpha_{i}}$ and 
$\absval{\tilde{\beta}_{i} - \beta_{i}} \lesssim u \absval{\beta_{i}}$ for $i = 1$, $2$.  Equation (\ref{eq:perteigvec}) 
can be written more compactly as 
\begin{equation}\label{eq:perteigcompact}
\tilde{\beta}_{2} \tilde{A}\tilde{x} = \tilde{\alpha}_{2}\tilde{B}\tilde{x}.
\end{equation}
Thus $\tilde{x}$ is an eigenvector of the perturbed pencil $\tilde{A} - \lambda \tilde{B}$ associated with eigenvalue
$\tilde{\sigma}_{2} = \tilde{\alpha}_{2}/\tilde{\beta}_{2}$.
We also write 
\begin{equation}\label{eq:tildeperta}
\tilde{A} = A + \delta A \quad\mbox{and}\quad \tilde{B} = B + \delta B_{1},
\end{equation}
with $\delta A$ and $\delta B_{1}$ diagonal matrices satisfying $\norm{\delta A} \lesssim u \norm{A}$ and 
$\norm{\delta B_{1}} \lesssim u \norm{B}$.

Finally we compute $Q$.  In exact arithmetic $Q$ is constructed so that $Q^{*}(BZe_{1}) = \eta\,e_{1}$, for some $\eta$, 
so the first column of $Q$ must be proportional to $BZe_{1}$.   In practice, instead of $BZe_{1}$ we use
\begin{displaymath}
\check{y} = \fl{B\tilde{Z}e_{1}} = \fl{B\tilde{x}} = 
\tilde{\gamma}^{-1}\left[\begin{array}{c} \beta_{1}\tilde{x}_{1}(1+ \epsilon_{1}') + 
b \tilde{x}_{2}(1 + \epsilon_{2}') \\ \beta_{2}\tilde{x}_{2}(1 + \epsilon_{3}') \end{array}\right],
\end{displaymath}
where $\absval{\epsilon_{i}'} \lesssim u$ for $i=1$, $2$, $3$.  The computed version of $Q$ is 
$\tilde{Q} =\fl{Q}$ satisfying 
\begin{displaymath}
\tilde{Q}e_{1} = \check{\zeta}^{-1}\left[\begin{array}{c} 
\check{y}_{1}(1 + \epsilon_{4}')  \\ \check{y}_{2}(1 + \epsilon_{5}')
\end{array}\right],
\end{displaymath}
where $\check{\zeta} = \norm{\check{y}}$, and $\epsilon_{4}'$ and $\epsilon_{5}'$ are due to the tiny roundoff
errors in the calculation.  

For our analysis we need to establish that there is a slightly perturbed matrix 
\begin{displaymath}
\hat{B} = B + \delta B_{2} = \left[\begin{array}{cc} \hat{\beta}_{1} & b \\ & \hat{\beta}_{2} \end{array}\right]
\end{displaymath}
such that $\tilde{Q}^{*}\hat{B}\tilde{Z}$ has an exact zero in the $(2,1)$ position.  This just means that 
$\tilde{y} = \tilde{Q}e_{1}$  is exactly proportional to $\hat{B}\tilde{Z}e_{1} = \hat{B}\tilde{x}$.  It is easy 
to check that the choice 
\begin{displaymath}
\hat{\beta}_{1} = \beta_{1}\frac{(1 + \epsilon_{1}')}{(1 + \epsilon_{2}')}, \qquad 
\hat{\beta}_{2} = \beta_{2}\frac{(1+\epsilon_{3}')(1 + \epsilon_{5}')}{(1 + \epsilon_{2}')(1 + \epsilon_{4}')}
\end{displaymath}
does the trick.  Clearly $\absval{\hat{\beta}_{1} - \beta_{1}} \lesssim u\,\absval{\beta_{1}}$ and
$\absval{\hat{\beta}_{2} - \beta_{2}} \lesssim u\,\absval{\beta_{2}}$, and $\delta B_{2}$ is a diagonal 
matrix satisfying $\norm{\delta B_{2}} \lesssim u\, \norm{B}$.

Our final computed results are $\fl{\tilde{Q}^{*}A\tilde{Z}}$ and $\fl{\tilde{Q}^{*}B\tilde{Z}}$.  We have to show
that the $(2,1)$ entries of these matrices are small enough that we can set them to zero without compromising 
backward  stability.  The ``$B$'' part is routine.  Focusing on the $(2,1)$ entry, we have 
\begin{displaymath}
e_{2}^{T}\fl{\tilde{Q}^{*}B\tilde{Z}}e_{1} =  e_{2}^{T}\tilde{Q}^{*}B\tilde{Z}e_{1} + e_{2}^{T}E_{1}e_{1},    
\end{displaymath}
where $E_{1}$ is the matrix of roundoff errors incurred in multiplying the three matrices together and 
satisfies $\norm{E_{1}} \lesssim u\,\norm{\tilde{Q}}\,\norm{B}\,\norm{\tilde{Z}}$, i.e.\ $\norm{E_{1}} \lesssim u\,\norm{B}$.  
The remaining term is 
\begin{displaymath}
e_{2}^{T}\tilde{Q}^{*}B\tilde{Z}e_{1} = e_{2}^{T}\tilde{Q}^{*}\hat{B}\tilde{Z}e_{1} - e_{2}^{T}\tilde{Q}^{*}\delta B_{2}\tilde{Z}e_{1}.
\end{displaymath}
The first term on the right-hand side is exactly zero by construction.  The second is bounded above by 
$\norm{\delta B_{2}} \lesssim u\,\norm{B}$.  This takes care of the ``$B$'' part.

The ``$A$'' part (the important part) is more delicate.  We have 
\begin{displaymath}
e_{2}^{T}\fl{\tilde{Q}^{*}A\tilde{Z}}e_{1} =  e_{2}^{T}\tilde{Q}^{*}A\tilde{Z}e_{1} + e_{2}^{T}E_{2}e_{1},    
\end{displaymath}
where $E_{2}$ is the matrix of roundoff errors incurred in multiplying the three matrices together
and satisfies $\norm{E_{2}} \lesssim u\,\norm{A}$.  
The remaining term is 
\begin{displaymath}
e_{2}^{T}\tilde{Q}^{*}A\tilde{Z}e_{1} = e_{2}^{T}\tilde{Q}^{*}\tilde{A}\tilde{Z}e_{1} - e_{2}^{T}\tilde{Q}^{*}\delta A\tilde{Z}e_{1}.
\end{displaymath}
The second term on the right-hand side is bounded above by $\norm{\delta A} \lesssim u\,\norm{A}$, so now we can just focus on the other term.  Here we make use of (\ref{eq:perteigcompact}), which can be written as 
$\tilde{A}\tilde{Z}e_{1} = (\tilde{\alpha}_{2}/\tilde{\beta}_{2})\tilde{B}\tilde{Z}e_{1}$.  

\begin{displaymath}
e_{2}^{T}\tilde{Q}^{*}\tilde{A}\tilde{Z}e_{1} = 
\frac{\tilde{\alpha}_{2}}{\tilde{\beta}_{2}}e_{2}^{T}\tilde{Q}^{*}\tilde{B}\tilde{Z}e_{1} =
\frac{\tilde{\alpha}_{2}}{\tilde{\beta}_{2}}e_{2}^{T}\tilde{Q}^{*}\hat{B}\tilde{Z}e_{1} +
\frac{\tilde{\alpha}_{2}}{\tilde{\beta}_{2}}e_{2}^{T}\tilde{Q}^{*}(\delta B_{1} - \delta B_{2})\tilde{Z}e_{1}.
\end{displaymath}
The term containing $\hat{B}$ is zero by construction, so now we just need to concentrate on the other term.
Let $\delta B = \delta B_{1} - \delta B_{2}$.  From the definitions of $\delta B_{1}$ and $\delta B_{2}$ we see that
\begin{displaymath}
\delta B = \left[\begin{array}{cc} \epsilon_{1}''\,\beta_{1} & 0 \\ 0 & \epsilon_{2}''\,\beta_{2} \end{array}\right], 
\end{displaymath}
where $\absval{\epsilon_{i}''} \lesssim u$ for $i=1$, $2$.  Moreover 
$\displaystyle\frac{\tilde{\alpha}_{2}}{\tilde{\beta}_{2}} = \frac{\alpha_{2}}{\beta_{2}}(1 + \epsilon_{3}'')$ for some 
tiny $\epsilon_{3}''$.   We also use our assumption $\absval{\sigma_{1}} \geq \absval{\sigma_{2}}$ to deduce that
$\absval{\beta_{1}\alpha_{2}/\beta_{2}} \leq \absval{\alpha_{1}}$.
Thus
\begin{displaymath}
\absval{(\tilde{\alpha}_{2}/\tilde{\beta}_{2})\delta B} = (1 + \epsilon_{3}'') 
\left[\begin{array}{cc} \absval{\epsilon_{1}''\,\beta_{1}\alpha_{2}/\beta_{2}} & 
\\ & \absval{\epsilon_{2}''\,\alpha_{2}} \end{array}\right] \leq 
(1 + \epsilon_{3}'') 
\left[\begin{array}{cc} \absval{\epsilon_{1}'' \,\alpha_{1}} & 
\\ & \absval{\epsilon_{2}''\,\alpha_{2}} \end{array}\right],
\end{displaymath}
so 
\begin{displaymath}
\norm{(\tilde{\alpha}_{2}/\tilde{\beta}_{2})\delta B} \lesssim u\,\norm{A}. 
\end{displaymath}
We conclude that 
our one remaining term, which is $(\tilde{\alpha}_{2}/\tilde{\beta}_{2})e_{2}^{T}\tilde{Q}^{*}(\delta B)\tilde{Z}e_{1}$, 
satisfies 
\begin{displaymath}
\absval{(\tilde{\alpha}_{2}/\tilde{\beta}_{2})e_{2}^{T}\tilde{Q}^{*}(\delta B)\tilde{Z}e_{1}} \lesssim u\,\norm{A}.
\end{displaymath}

We have demonstrated that 
\begin{displaymath}
\absval{e_{2}^{T}\fl{\tilde{Q}^{*}A\tilde{Z}}e_{1}} \lesssim u\,\norm{A} \quad \mbox{and} \quad
\absval{e_{2}^{T}\fl{\tilde{Q}^{*}B\tilde{Z}}e_{1}} \lesssim u\,\norm{B}, 
\end{displaymath}
so we can set these numbers
to zero without compromising backward stability.  The $\lesssim$ symbols hide constants, but these constants are not too large due to the small total number of operations required by the swap.

Our procedure improves on that of Van Dooren \cite{VanD81} in that the latter only guarantees that the 
two entries are bounded above by $u\max\{\norm{A},\norm{B}\}$ instead of $u\,\norm{A}$ and $u\,\norm{B}$ separately.
It follows that our procedure produces better results in cases where $A$ and $B$ have vastly different norms.  
We remind the reader that the $A$ and $B$ referred to here are the small matrices defined in (\ref{eq:startsubpencil}), 
and not the larger matrices in which they are imbedded.   Therefore we cannot solve the problem of different norms
by a simple rescaling of the large matrices at the outset, as that does not guarantee equal norms in all of the little submatrices in which the swaps take place.

\subsection*{Numerical experiments}

In most cases it does not matter which swapping procedure is used; they all perform well.  
In order to see a difference, they must be stress tested on pencils that have elements that vary 
widely in magnitude.   Therefore, in the two experiments reported here, we used pencils whose nonzero entries
are randomly generated complex numbers with magnitudes distributed logarithmically in the range from $10^{-12}$ to $10^{12}$. 

In our first test we generated sixty-four million random $2 \times 2$ upper-triangular 
pencils and computed the swapping transformations using three different algorithms:
our method, the method of Van Dooren \cite{VanD81}, and a method that solves the generalized Sylvester
equation explicitly to determine $Q$ and $Z$ \cite{BojVan93}.  The computations were done in IEEE standard 
double-precision arithmetic, for which $u \approx 10^{-16}$.   Table~\ref{tab:1} shows that our method 
always produces residuals $\absval{a_{21}}/\norm{A}$ and $\absval{b_{21}}/\norm{B}$ that are under $10^{-15}$, 
and more than $99.7\%$ of them are under $10^{-16}$.   In contrast, the Van Dooren and Sylvester methods 
sometimes produce much larger residuals, approaching $10^{0}$ in a few cases.  If we change the criterion and 
consider the residuals $\absval{a_{21}}/\Delta$ and $\absval{b_{21}}/\Delta$, where 
$\Delta = \max\{\norm{A},\norm{B}\}$, then all methods perform well, as Table~\ref{tab:2} shows.  By this 
criterion all residuals are under $10^{-15}$.  Our method and Van Dooren's method perform about 
equally well, and the Sylvester method is almost as good.   We conclude that if $\norm{A}$ and $\norm{B}$ are
roughly the same, it doesn't matter which method is used.  However, in problems for which there can be large
differences in magnitude between $\norm{A}$ and $\norm{B}$, our method is better. 

\begin{table}
  \centering
  \caption{Distribution of errors $\absval{\hat{a}_{21}}/\|A\|$ and $\absval{\hat{b}_{21}}/\|B\|$ for our method, 
    Van Dooren's method, and the Sylvester method.}
  \begin{tabular*}{\textwidth}{@{\extracolsep{\fill}}cC{1em}cC{0.13\textwidth}C{0.13\textwidth}C{0.13\textwidth}C{0.13\textwidth}}
    \toprule
    $\absval{\hat{x}_{21}}/\|X\|$& & {\small $\left[0, 10^{-16}\right]$} & {\small $\left(10^{-16}, 10^{-15}\right]$} & 
    {\small $\left(10^{-15}, 10^{-10}\right]$} & {\small $\left(10^{-10}, 10^{-5}\right]$} & {\small $\left(10^{-5},10^0 \right]$}\\
    \midrule
    \multirow{2}{*}{Our method}& $A$ & $99.71\%$ & $0.29\%$ & $0\%$ & $0\%$ & $0\%$ \\
                               & $B$ & $99.85\%$ & $0.15\%$ & $0\%$ & $0\%$ & $0\%$ \\[1.2ex]

    \multirow{2}{*}{Van Dooren}& $A$ & $98.19\%$ & $0.55\%$ & $0.93\%$ & $0.27\%$ & $0.06\%$ \\
                               & $B$ & $98.19\%$ & $0.55\%$ & $0.93\%$ & $0.27\%$ & $0.06\%$ \\[1.2ex]
   
    \multirow{2}{*}{Sylvester}& $A$ & $93.34\%$ & $5.88\%$ & $0.57\%$ & $0.17\%$ & $0.04\%$ \\
                              & $B$ &  $93.34\%$ & $5.88\%$ & $0.57\%$ & $0.17\%$ & $0.04\%$ \\                                                    
    \bottomrule
  \end{tabular*}
  \label{tab:1}
\end{table}

\begin{table}
\centering
\caption{Distribution of errors $\absval{\hat{a}_{21}}/\Delta$ and $\absval{\hat{b}_{21}}/\Delta$ for our method, 
Van Dooren's method, and the Sylvester method.}
  \begin{tabular*}{\textwidth}{@{\extracolsep{\fill}}cC{1em}cC{0.13\textwidth}C{0.13\textwidth}C{0.13\textwidth}C{0.13\textwidth}}
%\begin{tabular}{cc||c|c|c|c|c}
\toprule
$\absval{\hat{x}_{21}}/\Delta$&  & {\small$\left[0, 10^{-16}\right]$} & {\small$\left(10^{-16}, 10^{-15}\right]$} & {\small$\left(10^{-15}, 10^{-10}\right]$} & {\small$\left(10^{-10}, 10^{-5}\right]$} & {\small$\left(10^{-5},10^0 \right]$}\\
\midrule
\multirow{2}{*}{Our method}& $A$ & $99.87\%$ & $0.13\%$ & $0\%$ & $0\%$ & $0\%$ \\
                           & $B$ & $99.93\%$ & $0.07\%$ & $0\%$ & $0\%$ & $0\%$ \\[1.2ex]

\multirow{2}{*}{Van Dooren}& $A$ & $99.94\%$ & $0.06\%$ & $0\%$ & $0\%$ & $0\%$ \\
                           & $B$ & $99.94\%$ & $0.06\%$ & $0\%$ & $0\%$ & $0\%$ \\[1.2ex]

\multirow{2}{*}{Sylvester}& $A$ & $97.26\%$ & $2.74\%$ & $0\%$ & $0\%$ & $0\%$ \\
                           & $B$ &  $97.26\%$ & $2.74\%$ & $0\%$ & $0\%$ & $0\%$ \\                                                    
\bottomrule
\end{tabular*}
\label{tab:2}
\end{table}

It is natural to ask whether improved backward stability of the swapping transformations actually results 
in more accurate computed eigenvalues of the larger pencils.  To test this we considered ten thousand randomly generated
$3 \times 3$ upper Hessenberg pencils with logarithmically distributed entries with magnitudes varying from $10^{-12}$
to $10^{12}$.  Since we do not know the exact eigenvalues of these pencils, we used MATLAB with the ADVANPIX 
Multiprecision Computing Toolbox\footnote{https://www.advanpix.com} to compute ``exact'' eigenvalues in quadruple 
precision arithmetic.  We compared these with the approximate eigenvalues computed using our method and 
Van Dooren's.  

Before we look at that comparison, we note we didn't just compute the eigenvalues;   
in fact we computed the Schur form 
$A_{T} - \lambda B_{T} = Q^{*}(A - \lambda B)Z$, where $A_{T}$ and $B_{T}$ are upper triangular.  
This allowed us to compute residuals
\begin{equation}\label{eq:schurbackerrs}
r_{A} = \norm{A - QA_{T}Z^{*}}/\norm{A} \quad\mbox{and}\quad r_{B} = \norm{B - QB_{T}Z^{*}}/\norm{B},
\end{equation}
which are measures of backward error.  When the computation was done using our method, the residuals 
were always tiny, never exceeding $10^{-14}$, verifying normwise backward stability.  When Van Dooren's 
criterion was used, the residuals (\ref{eq:schurbackerrs}) were usually just as small but occasionally 
larger.    If the denominators $\norm{A}$ and $\norm{B}$ in the residuals $r_{A}$ and $r_{B}$ are replaced by 
$\Delta = \max\{\norm{A},\norm{B}\}$,  
the Van Dooren residuals also become uniformly small, never exceeding $10^{-14}$.

Of course tiny backward errors do not guarantee accurate computed eigenvalues, as some of them may be 
ill conditioned.  Moreover, decreasing the backward error does not necessarily guarantee improved eigenvalue
accuracy, so we must make the comparison.  
Let $\lambda_{i}$, $i=1$, $2$, $3$, denote the ``exact'' eigenvalues produced in quadruple precision, 
let $\lambda_{i}^{(o)}$ denote the approximate eigenvalues computed by our method, and let 
\begin{equation}\label{eq:relerrdef}
e^{(o)}   =  \max_{i}\absval{\lambda_{i}^{(o)} - \lambda_{i}}/\absval{\lambda_{i}},
\end{equation}
the maximum relative error.  Let $\lambda_{i}^{(v)}$ denote the eigenvalues computed by Van Dooren's method, 
and let $e^{(v)}$ denote the maximum relative error, defined analogously to $e^{(o)}$ as in (\ref{eq:relerrdef}).

We examined the ratios $e^{(v)}/e^{(0)}$ and found that just over 98\% of our trials resulted in  $0.1 < e^{(v)}/e^{(o)} < 10$, indicating that neither method was significantly more accurate than the other.   (In fact there were many cases where 
$e^{(v)}/e^{(o)}=1$, since it often happens that our criterion and Van Dooren's criterion make exactly the same decisions.)  
Of the remaining trials, which numbered 181, there were 145 in which our method did significantly better than Van Dooren's, 
i.e.\  $e^{(v)}/e^{(o)} > 10$, and 36 in which $e^{(v)}/e^{(o)} < 0.1$.   Thus our new method obtained more
accurate eigenvalues in about 80\% of the significant cases.  More details are given in histogram form in 
Figure~\ref{hist:ratplot}.   The gap in the center of the figure 
is due to having left out the many cases for which $e^{(v)}/e^{(o)}$ is close
to $1$.   In the interest of compactness and clarity, the figure also leaves out one ``off the charts'' case 
for which $e^{(v)}/e^{(o)} \approx 10^{15}$.
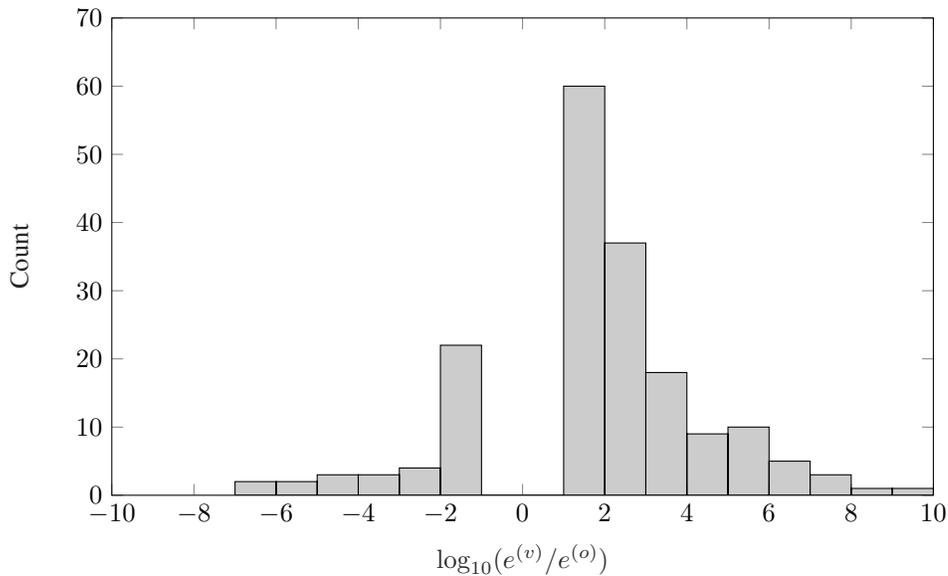
\begin{figure}
\caption{Histogram of logarithm of  $e^{(v)}/e^{(0)}$ in significant cases}
\begin{center}
\begin{tikzpicture}
\begin{axis}[%
width = 4.3in,
height = 2.5in,
at={(1.011in,0.642in)},
scale only axis,
xmin=-10,
xmax=10,
xlabel style={font=\color{white!15!black}},
xlabel={$\log_{10}(e^{(v)}/e^{(o)})$},
ymin=0,
ymax=70,
ylabel style={font=\color{white!15!black}},
ylabel={Count},
axis background/.style={fill=white},
]
\addplot[ybar interval, fill=gray, fill opacity=0.4, draw=black, area legend] table[row sep=crcr] {%
x	y\\
-7    2 \\
-6    2 \\ 
-5    3 \\
-4    3 \\
-3    4 \\
-2   22 \\
-1    0 \\
0     0 \\
1   60 \\
2   37 \\
3   18 \\
4    9  \\
5   10 \\
6    5  \\
7    3  \\
8    1  \\
9    1  \\
10  0 \\
};
\end{axis}
\end{tikzpicture}%
\end{center}
\label{hist:ratplot}
\end{figure}

\section{Conclusions}

We have discussed the RQZ algorithm and a number of variants, which we refer to generally as pole-swapping algorithms.
We have made two main contributions:  1) We have developed a flexible, modular convergence theory that can be
applied to any pole-swapping algorithm.  2) We have presented a new, more accurate, swapping procedure.  A backward
error analysis and numerical experiments demonstrate the superiority of the new procedure. 

\section*{Acknowledgment}  We thank the anonymous referees for carefully reading the paper and suggesting several improvements.

%\bibliographystyle{siam}
%\bibliography{poleswap}

\begin{thebibliography}{10}

\bibitem{AuMaRoVaWa18}
{\sc J.~L. Aurentz, T.~Mach, L.~Robol, R.~Vandebril, and D.~S. Watkins}, {\em
  Core-Chasing Algorithms for the Eigenvalue Problem}, SIAM, Philadelphia,
  2018.

\bibitem{AuMaRoVaWa18g}
\leavevmode\vrule height 2pt depth -1.6pt width 23pt, {\em Fast and backward
  stable computation of roots of polynomials, part {II}: backward error
  analysis; companion matrix and companion pencil}, {SIAM} J.\ Matrix Anal.\
  Appl., 39 (2018), pp.~1245--1269.

\bibitem{AuMaRoVaWa19}
\leavevmode\vrule height 2pt depth -1.6pt width 23pt, {\em Fast and backward
  stable computation of the eigenvalues and eigenvectors of matrix
  polynomials}, Math.\ Comp., 88 (2019), pp.~313--347.

\bibitem{AuMaVaWa15}
{\sc J.~L. Aurentz, T.~Mach, R.~Vandebril, and D.~S. Watkins}, {\em Fast and
  backward stable computation of roots of polynomials}, SIAM J. Matrix Anal.
  Appl., 36 (2015), pp.~942--973.

\bibitem{BaiDem93}
{\sc Z.~Bai and J.~Demmel}, {\em On swapping diagonal blocks in real {S}chur
  form}, Linear Algebra Appl., 186 (1993), pp.~73--95.

\bibitem{BerGut15}
{\sc M.~Berljafa and S.~G\"uttel}, {\em Generalized rational {K}rylov
  decompositions with an application to rational approximation}, SIAM J.\
  Matrix Anal.\ Appl., 36 (2015), pp.~894--916.

\bibitem{BojVan93}
{\sc A.~{B}ojanczyk and P.~{V}an {D}ooren}, {\em Reordering diagonal blocks in
  the real {S}chur form}, in Linear Algebra for Large Scale and Real-Time
  Applications, M.~Moonen, G.~Golub, and B.~D. Moor, eds., {NATO ASI} {S}eries
  {E}: Applied Sciences, Springer, 1993, pp.~351--352.

\bibitem{p522}
{\sc K.~Braman, R.~Byers, and R.~Mathias}, {\em The multishift {QR} algorithm.
  part {I}: Maintaining well-focused shifts and level 3 performance}, {SIAM}
  J.\ Matrix Anal.\ Appl., 23 (2002), pp.~929--947.

\bibitem{BrByMa01}
{\sc K.~Braman, R.~Byers, and R.~Matthias}, {\em The multishift {QR} algorithm,
  part {I}: Maintaining well focused shifts and level 3 performance}, {SIAM}
  J.\ Matrix Anal.\ Appl., 23 (2001), pp.~929--947.

\bibitem{BrByMa01b}
\leavevmode\vrule height 2pt depth -1.6pt width 23pt, {\em The multishift {QR}
  algorithm, part {II}: Aggressive early deflation}, {SIAM} J.\ Matrix Anal.\
  Appl., 23 (2001), pp.~948--973.

\bibitem{Bye83}
{\sc R.~Byers}, {\em Hamiltonian and Symplectic Algorithms for the Algebraic
  {R}iccati Equation}, PhD thesis, Cornell University, 1983.

\bibitem{Bye86}
\leavevmode\vrule height 2pt depth -1.6pt width 23pt, {\em A {H}amiltonian {QR}
  algorithm}, SIAM J.\ Sci.\ Stat.\ Comput., 7 (1986), pp.~212--229.

\bibitem{Cam19}
{\sc D.~Camps}, {\em Pole swapping methods for the eigenvalue problem:
  {R}ational {QR} algorithms}, PhD thesis, {KU} {L}euven, 2019.

\bibitem{CaMeVa19a}
{\sc D.~Camps, K.~Meerbergen, and R.~Vandebril}, {\em A rational {QZ} method},
  SIAM J.\ Matrix Anal.\ Appl., 40 (2019), pp.~943--972.

\bibitem{Fra61b}
{\sc J.~G.~F. Francis}, {\em The {QR} transformation, part {II}}, Computer J.,
  4 (1961), pp.~332--345.

\bibitem{GolVan13}
{\sc G.~H. Golub and C.~F. {V}an {L}oan}, {\em Matrix Computations}, {J}ohns
  {H}opkins {U}niversity {P}ress, {B}altimore, {F}ourth~ed., 2013.

\bibitem{GrKaKr10}
{\sc R.~Granat, B.~K{\aa}gstr\"{o}m, and D.~Kressner}, {\em A novel parallel
  {QR} algorithm for hybrid distributed memory {HPC} systems}, {SIAM} J.\ Sci.\
  Comput., 32 (2010), pp.~2345--2378.

\bibitem{Hig02}
{\sc N.~J. Higham}, {\em Accuracy and Stability of Numerical Algorithms}, SIAM,
  Philadelphia, 2nd~ed., 2002.

\bibitem{KagPor96a}
{\sc B.~K{\aa}gstr\"{o}m and P.~Poromaa}, {\em Computing eigenspaces with
  specified eigenvalues of a regular matrix pair {(A, B)} and condition
  estimation: Theory, algorithms and software}, Numerical Algorithms, 12
  (1996), pp.~369--407.

\bibitem{Kagpor96b}
\leavevmode\vrule height 2pt depth -1.6pt width 23pt, {\em Lapack-style
  algorithms and software for solving the generalized sylvester equation and
  estimating the separation between regular matrix pairs}, ACM Trans. Math.
  Softw., 22 (1996), pp.~78--103.

\bibitem{KaKrLa14}
{\sc L.~Karlsson, D.~Kressner, and B.~Lang}, {\em Optimally packed chains of
  bulges in multishift {QR} algorithms}, {ACM} Trans.\ Math.\ Software, 40
  (2014).

\bibitem{KrScWa08}
{\sc D.~Kressner, C.~Schr\"oder, and D.~S. Watkins}, {\em Implicit {QR}
  algorithms for palindromic and even eigenvalue problems}, Numer. Algorithms,
  51 (2009), pp.~209--238.

\bibitem{Lang97}
{\sc B.~Lang}, {\em Effiziente Orthogonaltransformationen bei der Eigen- und
  Singul\"{a}rwertzerlegung.}, Habilitationsschrift, Universit\"{a}t Wuppertal,
  Wuppertal, Germany, 1997.

\bibitem{Lang98}
\leavevmode\vrule height 2pt depth -1.6pt width 23pt, {\em Using level 3 {BLAS}
  in rotation-based algorithms}, {SIAM} J.\ Sci.\ Comput., 19 (1998),
  pp.~626--634.

\bibitem{MolSte73}
{\sc C.~B. Moler and G.~W. Stewart}, {\em An algorithm for generalized matrix
  eigenvalue problems}, {SIAM} J.\ Numer.\ Anal., 10 (1973), pp.~241--256.

\bibitem{CaMeVa19b}
{\sc T.~Steel, D.~Camps, K.~Meerbergen, and R.~Vandebril}, {\em A multishift,
  multipole rational {QZ} method with aggressive early deflation}.
\newblock arXiv:1902.10954, 2020.
\newblock submitted for publication.

\bibitem{VanD81}
{\sc P.~Van~Dooren}, {\em A generalized eigenvalue approach for solving
  {R}iccati equations}, {SIAM} J.\ Sci.\ Stat.\ Comput., 2 (1981),
  pp.~121--135.

\bibitem{VanWat12g}
{\sc R.~Vandebril and D.~S. Watkins}, {\em An extension of the {QZ} algorithm
  beyond the {Hessenberg}-upper triangular pencil}, Electron.\ Trans.\ Numer.\
  Anal., 40 (2012), pp.~17--35.

\bibitem{Wat95}
{\sc D.~S. Watkins}, {\em Forward stability and transmission of shifts in the
  {QR} algorithm}, SIAM J.\ Matrix Anal.\ Appl., 16 (1995), pp.~469--487.

\bibitem{Wat96}
\leavevmode\vrule height 2pt depth -1.6pt width 23pt, {\em The transmission of
  shifts and shift blurring in the {QR} algorithm}, Linear Algebra Appl.,
  241--243 (1996), pp.~877--896.

\bibitem{Wat98}
\leavevmode\vrule height 2pt depth -1.6pt width 23pt, {\em Bulge exchanges in
  algorithms of {QR} type}, SIAM J.\ Matrix Anal.\ Appl., 19 (1998),
  pp.~1074--1096.

\bibitem{Wat07}
\leavevmode\vrule height 2pt depth -1.6pt width 23pt, {\em The Matrix
  Eigenvalue Problem: {GR} and {Krylov} Subspace Methods}, SIAM, Philadelphia,
  2007.

\bibitem{Wat10}
\leavevmode\vrule height 2pt depth -1.6pt width 23pt, {\em Fundamentals of
  Matrix Computations}, Wiley, New York, 3rd~ed., 2010.

\bibitem{Wat11}
\leavevmode\vrule height 2pt depth -1.6pt width 23pt, {\em Francis's
  algorithm}, Amer.\ Math.\ Monthly, 118 (2011), pp.~387--403.

\bibitem{Wil65}
{\sc J.~H. Wilkinson}, {\em The {A}lgebraic {E}igenvalue {P}roblem}, Clarendon
  Press, Oxford University, 1965.

\end{thebibliography}

\end{document}